\newtheorem{theorem}{Theorem}[section]
\newtheorem{prop}[theorem]{Proposition}
\newtheorem{prop-def}{Proposition-Definition}[section]
\theoremstyle{definition}
\newtheorem{defn}[theorem]{Definition}
\newtheorem{remark}[theorem]{Remark}
\newtheorem{exam}[theorem]{Example}
\newcommand{\nc}{\newcommand}
\nc{\delete}[1]{{}}
\nc{\mmargin}[1]{}
\nc{\mlabel}[1]{\label{#1}}  
\nc{\mcite}[1]{\cite{#1}}  
\nc{\mref}[1]{\ref{#1}}  
\nc{\mbibitem}[1]{\bibitem{#1}} 
	\nc{\mlabel}[1]{\label{#1}  
		{\hfill \hspace{1cm}{\bf{{\ }\hfill(#1)}}}}
	\nc{\mcite}[1]{\cite{#1}{{\bf{{\ }(#1)}}}}  
	\nc{\mref}[1]{\ref{#1}{{\bf{{\ }(#1)}}}}  
	\nc{\mbibitem}[1]{\bibitem[\bf #1]{#1}} 
 \font\cyrs=wncyr7
\nc{\vep}{\varepsilon}
\nc{\bin}[2]{ (_{\stackrel{\scs{#1}}{\scs{#2}}})}  
\nc{\binc}[2]{(\!\! \begin{array}{c} \scs{#1}\\
		\scs{#2} \end{array}\!\!)}  
\nc{\bincc}[2]{  ( {\scs{#1} \atop
		\vspace{-1cm}\scs{#2}} )}  
\nc{\oline}[1]{\overline{#1}}
\nc{\mapm}[1]{\lfloor\!|{#1}|\!\rfloor}
\nc{\bs}{\bar{S}}
\nc{\la}{\longrightarrow}
\nc{\ot}{\otimes}
\nc{\rar}{\rightarrow}
\nc{\lon }{\,\rightarrow\,}
\nc{\dar}{\downarrow}
\nc{\dap}[1]{\downarrow \rlap{$\scriptstyle{#1}$}}
\nc{\defeq}{\stackrel{\rm def}{=}}
\nc{\dis}[1]{\displaystyle{#1}}
\nc{\dotcup}{\ \displaystyle{\bigcup^\bullet}\ }
\nc{\hcm}{\ \hat{,}\ }
\nc{\hts}{\hat{\otimes}}
\nc{\hcirc}{\hat{\circ}}
\nc{\lleft}{[}
\nc{\lright}{]}
\nc{\curlyl}{\left \{ \begin{array}{c} {} \\ {} \end{array}
	\right .  \!\!\!\!\!\!\!}
\nc{\curlyr}{ \!\!\!\!\!\!\!
	\left . \begin{array}{c} {} \\ {} \end{array}
	\right \} }
\nc{\longmid}{\left | \begin{array}{c} {} \\ {} \end{array}
	\right . \!\!\!\!\!\!\!}
\nc{\ora}[1]{\stackrel{#1}{\rar}}
\nc{\ola}[1]{\stackrel{#1}{\la}}
\nc{\scs}[1]{\scriptstyle{#1}} \nc{\mrm}[1]{{\rm #1}}
\nc{\dirlim}{\displaystyle{\lim_{\longrightarrow}}\,}
\nc{\invlim}{\displaystyle{\lim_{\longleftarrow}}\,}
\nc{\dislim}[1]{\displaystyle{\lim_{#1}}} \nc{\colim}{\mrm{colim}}
\nc{\mvp}{\vspace{0.3cm}} \nc{\tk}{^{(k)}} \nc{\tp}{^\prime}
\nc{\ttp}{^{\prime\prime}} \nc{\svp}{\vspace{2cm}}
\nc{\vp}{\vspace{8cm}}
\nc{\modg}[1]{\!<\!\!{#1}\!\!>}
\nc{\intg}[1]{F_C(#1)}
\nc{\lmodg}{\!<\!\!}
\nc{\rmodg}{\!\!>\!}
\nc{\cpi}{\widehat{\Pi}}
\nc{\ssha}{{\mbox{\cyrs X}}} 
\nc{\tsha}{{\mbox{\cyrt X}}}
\nc{\shpr}{\diamond}    
\nc{\labs}{\mid\!}
\nc{\rabs}{\!\mid}
\nc{\ad}{\mrm{ad}}
\nc{\ann}{\mrm{ann}}
\nc{\Aut}{\mrm{Aut}}
\nc{\md}{\mbox{-}\mathsf{mod}}
\nc{\br}{\mrm{bre}}
\nc{\can}{\mrm{can}}
\nc{\Cont}{\mrm{Cont}}
\nc{\rchar}{\mrm{char}}
\nc{\cok}{\mrm{coker}}
\nc{\de}{\mrm{dep}}
\nc{\dtf}{{R-{\rm tf}}}
\nc{\dtor}{{R-{\rm tor}}}
\nc{\Div}{{\mrm Div}}
\nc{\Diff}{\mrm{DA}}
\nc{\Diffl}{\mathsf{DA}_\lambda}
\nc{\diffo}{{\mathsf{DO}_\lambda}}
\nc{\alg}{\mathsf{Alg}}
\nc{\End}{\mrm{End}}
\nc{\Ext}{\mrm{Ext}}
\nc{\Fil}{\mrm{Fil}}
\nc{\Fr}{\mrm{Fr}}
\nc{\Frob}{\mrm{Frob}}
\nc{\Gal}{\mrm{Gal}}
\nc{\GL}{\mrm{GL}}
\nc{\Hom}{\mrm{Hom}}
\nc{\Hoch}{\mrm{Hoch}}
\nc{\hsr}{\mrm{H}}
\nc{\hpol}{\mrm{HP}}
\nc{\id}{\mrm{id}}
\nc{\im}{\mrm{im}}
\nc{\Id}{\mrm{Id}}
\nc{\ID}{\mrm{ID}}
\nc{\Irr}{\mrm{Irr}}
\nc{\incl}{\mrm{incl}}
\nc{\length}{\mrm{length}}
\nc{\NLSW}{\mrm{NLSW}}
\nc{\Lie}{\mrm{Lie}}
\nc{\mchar}{\rm char}
\nc{\mpart}{\mrm{part}}
\nc{\ql}{{\QQ_\ell}}
\nc{\qp}{{\QQ_p}}
\nc{\rank}{\mrm{rank}}
\nc{\rcot}{\mrm{cot}}
\nc{\rdef}{\mrm{def}}
\nc{\rdiv}{{\rm div}}
\nc{\rtf}{{\rm tf}}
\nc{\rtor}{{\rm tor}}
\nc{\res}{\mrm{res}}
\nc{\SL}{\mrm{SL}}
\nc{\Spec}{\mrm{Spec}}
\nc{\tor}{\mrm{tor}}
\nc{\Tr}{\mrm{Tr}}
\nc{\tr}{\mrm{tr}}
\nc{\wt}{\mrm{wt}}
\nc{\op}{\mrm{op}}
\nc{\mbf}{\mathbf}
\nc{\bfk}{{\mathbf k}}
\nc{\bft}{{\mathbf t}}
\nc{\bfone}{{\bf 1}}
\nc{\bfzero}{{\bf 0}}
\nc{\detail}{\marginpar{\bf More detail}
	\noindent{\bf Need more detail!}
	\svp}
\nc{\gap}{\marginpar{\bf Incomplete}\noindent{\bf Incomplete!!}
	\svp}
\nc{\FMod}{\mathbf{FMod}}
\nc{\Int}{\mathbf{Int}}
\nc{\Mon}{\mathbf{Mon}}
\nc{\remarks}{\noindent{\bf Remarks: }}
\nc{\Rep}{\mathbf{Rep}}
\nc{\Rings}{\mathbf{Rings}}
\nc{\Sets}{\mathbf{Sets}}
\nc{\ob}{\mathsf{Ob}}
\nc{\BA}{{\mathbb A}}   \nc{\CC}{{\mathbb C}}
\nc{\DD}{{\mathbb D}}   \nc{\EE}{{\mathbb E}}
\nc{\FF}{{\mathbb F}}   \nc{\GG}{{\mathbb G}}
\nc{\HH}{{\mathbb H}}   \nc{\LL}{{\mathbb L}}
\nc{\NN}{{\mathbb N}}   \nc{\PP}{{\mathbb P}}
\nc{\QQ}{{\mathbb Q}}   \nc{\RR}{{\mathbb R}}
\nc{\TT}{{\mathbb T}}   \nc{\VV}{{\mathbb V}}
\nc{\ZZ}{{\mathbb Z}}   \nc{\TP}{\widetilde{P}}
\nc{\m}{{\mathbbm m}}
\nc{\cala}{{\mathcal A}}    \nc{\calc}{{\mathcal C}}
\nc{\cald}{\mathcal{D}}     \nc{\cale}{{\mathcal E}}
\nc{\calf}{{\mathcal F}}    \nc{\calg}{{\mathcal G}}
\nc{\calh}{{\mathcal H}}    \nc{\cali}{{\mathcal I}}
\nc{\call}{{\mathcal L}}    \nc{\calm}{{\mathcal M}}
\nc{\caln}{{\mathcal N}}    \nc{\calo}{{\mathcal O}}
\nc{\calp}{{\mathcal P}}    \nc{\calr}{{\mathcal R}}
\nc{\cals}{{\mathcal S}}
\nc{\calT}{{\mathcal T}}   \nc{\calt}{{\Omega}}
\nc{\calv}{{\mathcal V}}    \nc{\calw}{{\mathcal W}}
\nc{\calx}{{\mathcal X}}
\nc{\fraka}{{\mathfrak a}}
\nc{\frakb}{\mathfrak{b}}
\nc{\frakg}{{\frak g}}
\nc{\frakl}{{\frak l}}
\nc{\fraks}{{\frak s}}
\nc{\frakB}{{\frak B}}
\nc{\frakm}{{\frak m}}
\nc{\frakM}{{\frak M}}
\nc{\frakp}{{\frak p}}
\nc{\frakW}{{\frak W}}
\nc{\frakX}{{\frak X}}
\nc{\frakS}{{\frak S}}
\nc{\frakA}{{\frak A}}
\nc{\frakx}{{\frakx}}
\nc{\lir}[1]{\textcolor{red}{\underline{Li:}#1 }}
\nc{\tred}[1]{\textcolor{red}{#1}} \nc{\tgreen}[1]{\textcolor{green}{#1}}
\nc{\tblue}[1]{\textcolor{blue}{#1}} \nc{\tpurple}[1]{\textcolor{purple}{#1}}
\nc{\ra}{\rightarrow}
\nc{\yuan}[1]{\tred{\underline{Yuan:}#1 }}
\nc{\cal}{\mathcal}
\nc{\bbim}[2]{#1 #2} \nc{\bbbim}[2]{#1,\, #2} \nc{\RBF}{{\rm RBF}}
\nc{\frbf}{F_{\RBF}} \nc{\shaf}{\ssha_{\Omega}} \nc{\sham}{\diamond_{\Omega}}
\nc{\lf}{\lfloor} \nc{\rf}{\rfloor} \nc{\shan}{\ssha_{\lambda}}
\nc{\rlex}{{\rm lex}}
\def\de{\delta}
\def\la{\lambda}
\def\C{\mathbb{C}}
\def\DD{{\cal D}}
\def\Ker{{\rm Ker}}
\def\rar{\rightarrow}
\def\bs{\backslash}
\nc{\lbar}[1]{\overline{#1}}
\def\Cdot#1,#2,#3;{\cdot_{#1;\,#2,\,#3}}
\def\Mu#1,#2,#3;{\mu_{#1;\,#2,\,#3}}
\def\Lambda#1,#2,#3;{\lambda_{#1,\,(#2,\,#3)}}
\def\Rhd#1,#2,#3;{\rhd_{#1;\,#2,\,#3}}
\def\Lhd#1,#2,#3;{\lhd_{#1;\,#2,\,#3}}
\def\Star#1,#2,#3;{\star_{#1;\,#2,\,#3}}
\nc{\RBO}{{\mathrm{RBF}_q}}
\nc{\RBA}{{\mathrm{RBFA}_q}}
\nc{\Alg}{\mathrm{Alg,\Omega}}
\nc{\NjO}{\mathrm{NjO}}
\nc{\rmH}{\mathrm{H}}
\begin{document}

\title[Rota-Baxter family $\Omega$-associative conformal algebras and their cohomology theory ]{Rota-Baxter family $\Omega$-associative conformal algebras and their cohomology theory}

\author{Yuanyuan Zhang}
\address{School of Mathematics and Statistics, Henan University, Henan, Kaifeng 475004, P.\,R. China}
\email{zhangyy17@henu.edu.cn}

\author{Jun Zhao}
\address{School of Mathematics and Statistics, Henan University, Henan, Kaifeng 475004, P.\,R. China}
\email{zhaoj@henu.edu.cn}

\author{Genqiang Liu}
\address{School of Mathematics and Statistics, Henan University, Henan, Kaifeng 475004, P.\,R. China}
\email{liugenqiang@henu.edu.cn}

\date{\today}

\begin{abstract}
In this paper, we first propose the concept of Rota-Baxter family $\Omega$-associative conformal algebras, then we study the cohomology theory of Rota-Baxter family $\Omega$-associative conformal algebras of any weight and justify it by interpreting the lower degree cohomology groups as formal deformations.
\end{abstract}

\subjclass[2010]{
16W99, 
16E40,   
16S80,   
16S70   
}

\keywords{cohomology, extension, deformation, Rota-Baxter family algebra, conformal algebra}

\maketitle

\tableofcontents

\allowdisplaybreaks

\section{Introduction}
Rota-Baxter algebra was introduced by Baxter~\cite{Bax}. Later some combinatoric properties of Rota-Baxter algebras were studied by Rota~\cite{Rot69} and Cartier~\cite{Car}.
Let $\mathbb{F}$ be any field and $\lambda\in \mathbb{F}$. A Rota-Baxter algebra of weight $\lambda$ is an associative $\mathbb{F}$-algebra with a Rota-Baxter operator $P:A\ra A$ satisfying
\[P(a)P(b)=P(aP(b)+P(a)b+\lambda ab),\,\text{ for }\, a,b\in A.\]
The concept of algebras with multiple linear operators (also called $\Omega$-algebra) was first introduced by A. G. Kurosch in ~\cite{Kur}. The concept of Rota-Baxter family algebra is a generalization of Rota-Baxter algebras~\cite{Gub}, which was proposed by Guo. It arises naturally in renormalization of quantum field theory~(\cite[Proposition~9.1]{FBP} and \cite[Theorem 3.7.2]{DK}). Recently, many scholars begin to pay attention to families algebraic structures, such as Foissy~\cite{Foissy}, Zhang and Gao~\cite{ZGG,GGZ21}, Aguiar~\cite{Aguiar}, Das~\cite{Das1} and so on. \\

The notion of a conformal algebra encodes an axiomatic description of the operator product expansion (OPE) of chiral fields in conformal field theory and it is an adequate tool for the study of infinite-dimensional Lie algebras satisfying the locality property~\cite{Kac1998, Kac1996, Kac99} and Hamiltonian formalism in the theory of nonlinear evolution equations~\cite{Dor}. Lie conformal algebra appears as a useful tool in studying vertex algebra.
 An algebraic formalization of the properties of the OPE in 2-dimensional conformal field theory~\cite{BPZ} gives rise to a new class of algebraic systems, vertex operator algebras~\cite{BO, FL}. The singular part of the OPE describes the commutator of two fields, and the corresponding algebraic structures are called conformal (Lie) algebras~\cite{Kac1996} or vertex Lie algebras~\cite{FB}.
 Lie conformal algebras correspond to vertex algebras by the same way as Lie algebras correspond to their associative enveloping algebras. In particular, associative conformal algebras naturally appear in the representation theory of Lie conformal algebras. In~\cite{HB1}, Hong and Bai developed a bialgebra theory for associative conformal algebras, which is not only a conformal analogue of associative bialgebras, but also an associative analogue of conformal bialgebras. They also introduced the notions of $\cal{O}$-operators of associative conformal algebras and dendriform conformal algebras to construct solutions of associative conformal Yang-Baxter equation. In~\cite{H1}, the authors give a study of the $\mathbb{F}[\partial]$-split extending structures problem for associative conformal algebras. \\

The formal deformation theory of algebraic structures was first developed for associative algebras in the classical work of Gerstenhaber~\cite{Ger1963,Ger}, which is closely related to the cohomology theory of Hochschild cohomology on associative algebras~\cite{Hoch}.
Then Nijenhuis and Richard~\cite{NR} extended it to Lie algebras. Recently, Tang, Bai, Guo and Sheng~\cite{RBGS} developed deformation theory and cohomology theory of $\cal{O}$-operators on Lie algebras. Das~\cite{Das1} developed the corresponding weight zero cohomology theory for Rota-Baxter associative algebra. Wang and Zhou~\cite{WZ1, WZ2} defined cohomology theory for Rota-Baxter associative algebras of any weight, determined the underlying $L_\infty$-algebra and also showed that the dg operad of homotopy Rota-Baxter associative algebras is the minimal model of that of Rota-Baxter associative algebras. In~\cite{BKV}, the authors develop a cohomology theory of conformal algebras with coefficients in an arbitrary module and it possess standards properties of cohomology theories. In~\cite{Lamei}, the authors study $\cal{O}$-operator of associative conformal algebras with respect to conformal bimodules and construct cohomology of $\cal{O}$-operators.\\

In the present paper, we introduce the  Rota-Baxter family version of associative conformal algebras, and their cohomology theory. This paper is organized as follows. In Section~\ref{sec:conformal algebras}, we mainly introduce some basic concepts of conformal algebras, Rota-Baxter family conformal algebras, $\Omega$-associative conformal algebras and construct a new bimodule structure on $\Omega$-associative conformal algebras. In Section~\ref{Sect: Cohomology theory of Rota-Baxter algebras}, we define a cohomology theory for Rota-Baxter family $\Omega$-associative conformal algebras of any weight and built a chain map between the cohomology of Rota-Baxter family operator and the cohomology of Rota-Baxter family $\Omega$-associative conformal algebras. In Section~\ref{sec:formal deformations}, we study formal deformations of Rota-Baxter family on $\Omega$-associative conformal algebras and interpret them  via lower degree cohomology groups.
In Section~\ref{sec:abelian extension}, we study abelian extensions of Rota-Baxter family $\Omega$-associative conformal algebras and show that they are classified by the second cohomology, as one would expect of a good cohomology theory.

\smallskip

\subsection*{Notations}
Throught this paper, we denote by $\mathbb{F}$ any field  and $\mathbb{R}$ the field of real numbers. 
 All tensors over $\mathbb{F}$ are denoted by $\ot.$ If $A$ is a vector space, then the space of polynomials of $\lambda$ with coefficients in $A$ is denoted by $A[\lambda].$

\section{Rota-Baxter family conformal algebras and modules}
\label{sec:conformal algebras}

In this section, we mainly give some basic concepts and examples.

\subsection{Rota-Baxter family conformal algebras}
In this subsection,  we first recall the concept of Rota-Baxter family algebras,
which arises naturally in renormalization of quantum field theory~\cite[Proposition~9.1]{FBP}, some related results can be found in~\cite{Aguiar, Guo09,ZG,ZGM,ZGM2}. Then we recall some definitions, notations and results about conformal algebras and Rota-Baxter conformal algebras~\cite{Kac1998,Kac99, HB1,HB2}.

\begin{defn}(\mcite{FBP,Guo09})
Let $\Omega$ be a semigroup and $q\in \mathbb{F}$ be given.
A {\bf Rota-Baxter family} of weight $q$ on an algebra $A$ is a collection of linear operators $(P_\omega)_{\omega\in\Omega}:A\ra A$ such that
\begin{equation*}
P_{\alpha}(a)P_{\beta}(b)=P_{\alpha\beta}\left( P_{\alpha}(a)b  + a P_{\beta}(b) + q ab \right),\, \text{ for }\, a, b \in A\,\text{ and }\, \alpha,\, \beta \in \Omega.
\mlabel{eq:RBF}
\end{equation*}
Then the pair $(A,\, (P_\omega)_{\omega\in\Omega})$ is called a {\bf Rota-Baxter family algebra} of weight $q$.
\mlabel{def:pp}
\end{defn}

\begin{exam}\cite{ZGM2}\label{exam:Rota-Baxter family}
Let~$\Omega=(\mathbb R,+)$ be a semigroup, and $R$ is the $\mathbb R$-algebra consisting of all continous functions from $\mathbb R$ to $\mathbb R$, the multiplication is defined by
 \[(fg)(x):=f(x)g(x),\text{ for }\,f,g\in R.\]
 And for any $\alpha\in\Omega$, define a family linear operator~$P_\alpha:R\ra R$ as follows
\[P_\alpha(f)(x)=e^{-\alpha A(x)}\int_0^{x} e^{\alpha A(t)}f(t)\,dt,\,\text{ for }\, \alpha\in\Omega,\]
where $A$ is a fixed nonzero element of $R$. Then $(P_\alpha)_{\alpha\in\Omega}$ is a Rota-Baxter family of weight $0$. In fact, for any $f,g\in R$, we have
\begin{align*}
&P_{\alpha+\beta}\Big(P_\alpha(f)g+fP_\beta(g)\Big)(x)=e^{-(\alpha+\beta)A(x)}\int_0^x e^{(\alpha+\beta)A(t)}
\Big(P_\alpha(f)(t)g(t)+f(t)P_\beta(g)(t)\Big)\,dt\\
&= e^{-(\alpha+\beta)A(x)}\int_0^x e^{(\alpha+\beta)A(t)}
\Biggl(g(t) e^{-\alpha A(t)}\int_0^t e^{\alpha A(s)}f(s)\,ds+f(t) e^{-\beta A(t)}\int_0^t e^{\beta A(s)}g(s)ds\Biggl)\,dt\\
&= e^{-(\alpha+\beta)A(x)}\Biggl(\int_0^xg(t) e^{\beta A(t)}\Big(\int_0^t e^{\alpha A(s)}f(s)\,ds\Big)\,dt+\int_0^x e^{\alpha A(t)}f(t)\Big(\int_0^t e^{\beta A(s)}g(s)\,ds\Big)\,dt\Biggl)\\
&= e^{-(\alpha+\beta)A(x)}\left(\iint_{0\le s\le t\le x}e^{\beta A(t)}g(t) e^{\alpha A(s)}f(s)\,dsdt+ \iint_{0\le t\le s\le x}e^{\beta A(t)}g(t) e^{\alpha A(s)}f(s)\,dsdt\right)\\
&= e^{-(\alpha+\beta)A(x)}\int_0^x e^{\alpha A(t)}f(t)\,dt\int_0^x e^{\beta A(t)}g(t)\,dt=P_\alpha f(x)P_\beta g(x).
\end{align*}
Hence $P_{\alpha+\beta}\Big(P_\alpha(f)g+fP_\beta(g)\Big)=P_\alpha (f)P_\beta (g).$
\end{exam}



\begin{defn}\cite{Kac1998} An associative conformal algebra is a $\mathbb{F}[\partial]$-module $A$ endowed with a $\lambda$-multiplication $a_\lambda b$ which defines a $\mathbb{F}$-linear map
\[\mu_{\lambda}: A\ot A\ra A[\lambda],\quad (a, b)\mapsto a\cdot_\lambda b:= a_\lambda b,\,\text{ for }\, a,b\in A,\]
satisfying the following axiom
\begin{align*}
(\partial a)_\lambda b=&-\lambda a_\lambda b,\quad a_\lambda\partial b=(\partial+\lambda)a_\lambda b,\quad\text{(conformal sesquilinearity-I)}\\
&(a_\lambda b)_{\lambda+\mu}c=a_\lambda(b_\mu c).\quad \text{(associativity)}
\end{align*}
\end{defn}

A conformal algebra is called {\bf finite} if it is finitely generated as  $\mathbb{F}[\partial]$-module. The {\bf rank} of a conformal algebra $A$ is its rank as a $\mathbb{F}[\partial]$-module.

\begin{remark}\cite{Kac1998}
The following properties always hold in an associative conformal algebra:
$$
\begin{aligned}
a_{\lambda}\big(b_{-\partial-\mu} c\big) &=\big(a_{\lambda} b\big)_{-\partial-\mu} c, \\
a_{-\partial-\lambda}\big(b_{\mu} c\big) &=\big(a_{-\partial-\mu} b\big)_{-\partial+\mu-\lambda} c, \\
a_{-\partial-\lambda}\big(b_{-\partial-\mu} c\big) &=\big(a_{-\partial+\mu-\lambda} b\big)_{-\partial-\mu} c .
\end{aligned}
$$
\end{remark}


\begin{exam}\cite{HB1}\label{exam:assr}
 Let $(A, \cdot)$ be an associative algebra. Then $\operatorname{Cur}(A)=\mathbb{F}[\partial] \otimes A$ is an associative conformal algebra with the following $\lambda$-product:
$$
(p(\partial) a)_{\lambda}(q(\partial) b)=p(-\lambda) q(\lambda+\partial)(a \cdot b),\,\text{ for }\, p(\partial), q(\partial) \in \mathbb{F}[\partial]\,\text{ and }\,a, b \in A.
$$
\end{exam}

The author in~\cite{HB1} introduced the associative conformal Yang-Baxter equation as a conformal analogue of the associative equation and interpret it in terms of its operator forms by introducing the notion of $\mathcal{O}$-operators (generalized Rota-Baxter operator) of associative conformal algebras. They also proved an antisymmetric solution of the associative conformal Yang-Baxter equation corresponds to the skew-symmetric part of a conformal linear map $P$.


\begin{defn}\cite{HB1}
Let $A$ be an associative conformal algebra. If $P:A\ra A$ is a $\mathbb{F}[\partial]$-module homomorphism satisfying
\begin{equation*}
P(a)_\lambda P(b)=P(P(a)_\lambda b)+P(a_\lambda P(b))+qP(a_\lambda b),\,\text{ for }\,a,b\in A,
\end{equation*}
then $P$ is called a Rota-Baxter operator of weight $q$ on $A$, where $q\in\mathbb{F}.$
\end{defn}


 \begin{exam}\cite{HB1}
 Let $A$ be a finite associative conformal algebra which is free as a $\mathbb{F}[\partial]$-module and $r\in A\ot A$ be a symmetric Frobenius conformal algebra, that is, $A$ has a non-degenerate symmetric invariant conformal bilinear form, then $r$ is a solution of associative conformal Yang-Baxter equation if and only if $P_{0}^{r}$ is a Rota-Baxter operator of weight zero on $A$.
 \end{exam}
 Rota-Baxter family algebra is a generalization of Rota-Baxter algebra, which arises naturally in renormalization of quantum field theory~\cite{FBP}. Recently, some scholars begin to pay attention to the study of family algebra structures, such as Aguiar~\cite{Aguiar}, Foissy~\cite{Foissy} and so on. Now we propose the concept of Rota-Baxter family conformal algebras of weight $q$.
\begin{defn}\label{defn:rbfconformal}
Let $\Omega$ be a semigroup and $A$ an associative conformal algebra. If $(P_\omega)_{\omega\in\Omega}:A\ra A$ is a family of $\mathbb{F}[\partial]$-module homomorphisms satisfying
\begin{equation*}
P_\alpha(a)_\lambda P_\beta(b)=P_{\alpha\beta}(P_\alpha(a)_\lambda b+a_\lambda P_\beta(b)+q a_\lambda b),\,\text{ for }\,a,b\in A,
\end{equation*}
then $(P_\omega)_{\omega\in\Omega}$ is called {\bf a Rota-Baxter family operator of weight $q$} on $A$, where $q\in\mathbb{F}.$
\end{defn}

\begin{exam}
 Let $(R,(P_\alpha)_{\alpha\in\Omega})$ be a Rota-Baxter family algebra defined just as Example~\ref{exam:Rota-Baxter family}. Define
\[f\cdot_\lambda g:= fg,\,\text{ for }\,f,g\in R \text{ and  satisfy}\,P_\alpha\partial=\partial P_\alpha.\]
Then $\operatorname{Cur}(R)=\mathbb{R}[\partial] \otimes R$ endowed with a natural Rota-Baxter family conformal associative algebra and $(P_\alpha)_{\alpha\in\Omega}$ is a Rota-Baxter family of weight $0$.
\end{exam}

The following Propositions provide the way to construct Rota-Baxter family conformal algebras.
\begin{prop}
Let $\Omega$ be a semigroup and $\mathbb{F}\Omega$ a semigroup algebra.
Then $(A,\mu_\lambda,(P_\omega)_{\omega\in\Omega})$ is a Rota-Baxter family conformal algebra of weight $q$, if and only if,
$(A \ot\mathbb{F}\Omega,P)$ is a Rota-Baxter conformal algebra of weight $q$, where $P: A\ot\mathbb{F}\Omega\ra A\ot\mathbb{F}\Omega, a\ot \omega\mapsto P_\omega(a)\ot\omega.$
\end{prop}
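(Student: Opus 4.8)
The plan is to prove both directions by direct verification, translating the Rota-Baxter family axiom on $A$ into the single Rota-Baxter axiom on $A\ot\mathbb{F}\Omega$ and back. First I would fix notation: endow $A\ot\mathbb{F}\Omega$ with the $\lambda$-product $(a\ot\alpha)_\lambda(b\ot\beta):=(a_\lambda b)\ot(\alpha\beta)$, where $\alpha\beta$ is the product in the semigroup $\Omega$ (extended bilinearly to $\mathbb{F}\Omega$), and equip it with the $\mathbb{F}[\partial]$-action $\partial(a\ot\alpha)=(\partial a)\ot\alpha$. I would first check that this makes $A\ot\mathbb{F}\Omega$ an associative conformal algebra: conformal sesquilinearity follows because $\partial$ acts only on the $A$-factor, and associativity $\big((a\ot\alpha)_\lambda(b\ot\beta)\big)_{\lambda+\mu}(c\ot\gamma)=(a\ot\alpha)_\lambda\big((b\ot\beta)_\mu(c\ot\gamma)\big)$ reduces to the associativity axiom of $A$ in the first tensor slot together with associativity of the semigroup $\Omega$ in the second. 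I would also verify that the proposed $P$ is a genuine $\mathbb{F}[\partial]$-module homomorphism, which is immediate since each $P_\omega$ commutes with $\partial$ by Definition~\ref{defn:rbfconformal} and $P$ preserves the $\Omega$-grading.

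The heart of the argument is the equivalence of the two operator identities. For the forward direction, assume $(A,\mu_\lambda,(P_\omega)_{\omega\in\Omega})$ is a Rota-Baxter family conformal algebra of weight $q$. I would compute $P(a\ot\alpha)_\lambda P(b\ot\beta)=\big(P_\alpha(a)\ot\alpha\big)_\lambda\big(P_\beta(b)\ot\beta\big)=\big(P_\alpha(a)_\lambda P_\beta(b)\big)\ot(\alpha\beta)$, then apply the family axiom to the first factor to rewrite this as $P_{\alpha\beta}\big(P_\alpha(a)_\lambda b+a_\lambda P_\beta(b)+q\,a_\lambda b\big)\ot(\alpha\beta)$. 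The point is that this last expression is exactly $P\big((a\ot\alpha)_\lambda(b\ot\beta)\big)$ expanded out: since $(a\ot\alpha)_\lambda(b\ot\beta)=(a_\lambda b)\ot(\alpha\beta)$ sits in the $\alpha\beta$-component, the operator $P$ acts on it by $P_{\alpha\beta}$, and likewise $P\big(P(a\ot\alpha)_\lambda(b\ot\beta)\big)=P_{\alpha\beta}\big(P_\alpha(a)_\lambda b\big)\ot(\alpha\beta)$ and $P\big((a\ot\alpha)_\lambda P(b\ot\beta)\big)=P_{\alpha\beta}\big(a_\lambda P_\beta(b)\big)\ot(\alpha\beta)$. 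Matching the three terms gives the single Rota-Baxter identity on $A\ot\mathbb{F}\Omega$.

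For the converse, I would run the same computation in reverse: evaluating the Rota-Baxter identity for $P$ on the homogeneous generators $a\ot\alpha$ and $b\ot\beta$ produces, in the $\alpha\beta$-component, precisely the family identity $P_\alpha(a)_\lambda P_\beta(b)=P_{\alpha\beta}\big(P_\alpha(a)_\lambda b+a_\lambda P_\beta(b)+q\,a_\lambda b\big)$. Since the elements $a\ot\alpha$ span $A\ot\mathbb{F}\Omega$ over $\mathbb{F}[\partial]$ and both sides of the defining identities are $\mathbb{F}[\partial]$-bilinear in the $\lambda$-sense, it suffices to test on these generators, so the equivalence holds in full generality. The one subtlety to keep track of — and the step I expect to require the most care — is the bookkeeping of the $\Omega$-grading: because $P$ is defined component-wise as $a\ot\omega\mapsto P_\omega(a)\ot\omega$, one must confirm that whenever $P$ is applied to a product $(a\ot\alpha)_\lambda(b\ot\beta)$ it picks out the operator $P_{\alpha\beta}$ rather than $P_\alpha$ or $P_\beta$, which is exactly what makes the family subscript $\alpha\beta$ in Definition~\ref{defn:rbfconformal} correspond to the single operator on the tensor algebra. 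This grading compatibility is where the semigroup structure of $\Omega$ is essential, and once it is spelled out on generators the two identities are manifestly the same.
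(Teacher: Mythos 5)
Your proposal is correct and follows essentially the same route as the paper: a direct computation on homogeneous generators $a\ot\alpha$, $b\ot\beta$ that unfolds the definition of $P$ and the graded $\lambda$-product to identify the family identity with the single Rota-Baxter identity in the $\alpha\beta$-component. The paper's proof is just the forward chain of equalities read as reversible; your extra remarks (verifying the conformal algebra structure on $A\ot\mathbb{F}\Omega$, the $\mathbb{F}[\partial]$-linearity of $P$, and the spanning argument for the converse) fill in details the paper leaves implicit but do not change the argument.
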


\begin{proof}
For $x,y\in A$ and $\alpha,\beta\in \Omega,$ we have
\begin{align*}
P(x\ot\alpha)_\lambda P(y\ot\beta)&= \bigl(P_\alpha(x)\ot\alpha\bigr)_\lambda (P_\beta(y)\ot \beta)
= \bigl(P_\alpha(x)_\lambda P_\beta(y)\bigr)\ot\alpha\beta\\
&= P_{\alpha\beta}\bigl({P_\alpha(x)}_\lambda y+x_\lambda P_\beta(y)+q x_\lambda y\bigr)\ot \alpha\beta\\
&= P\Bigl(\bigl({P_\alpha(x)}_\lambda y+x_\lambda P_\beta(y)+q x_\lambda y\bigr)\ot\alpha\beta\Bigr)\\
&= P\bigl({P_\alpha(x)}_\lambda y\ot\alpha\beta+x_\lambda P_\beta(y)\ot\alpha\beta+q x_\lambda y\ot\alpha\beta\bigr)\\
&= P\Bigl(\bigl(P_\alpha(x)\ot\alpha\bigr)_\lambda(y\ot \beta)+(x\ot\alpha)_\lambda\bigl(P_\beta(y)\ot\beta\bigr)+q(x\ot\alpha)_\lambda(y\ot\beta)\Bigr)\\
&= P\bigl(P(x\ot\alpha)_\lambda(y\ot\beta)+(x\ot\alpha)_\lambda P(y\ot\beta)+qa(x\ot\alpha)_\lambda(y\ot\beta)\bigr),
\end{align*}
as required.
\end{proof}

\begin{exam}
Based on Example~\ref{exam:assr}, let $\Omega$ be a semigroup. Then $(\operatorname{Cur}(A)=\mathbb{F}[\partial] \otimes A, (P_\omega)_{\omega\in\Omega})$ is a Rota-Baxter family conformal algebra, if and only if, $P$ is a Rota-Baxer operator on $\operatorname{Cur}(A\ot\mathbb{F}\Omega)=\mathbb{F}[\partial] \otimes A\ot\mathbb{F}\Omega$  with the following $\lambda$-product:
\begin{align*}
P(p(\partial) a\ot\alpha):={}&P_\alpha(p(\partial) a)\ot\alpha,\\
(p(\partial) a\ot\alpha)\cdot_{\lambda}(q(\partial) b\ot\beta)={}&p(-\lambda) q(\lambda+\partial)(a \cdot b)\ot\alpha\beta,
\end{align*}
where $p(\partial), q(\partial) \in \mathbb{F}[\partial]$ and $a, b \in A,\alpha,\beta\in\Omega.$
\end{exam}

\begin{prop}
Let $(A,\,\mu_{A,\,\lambda}, (P_\omega)_{\omega\in\Omega})$ and $(B, \mu_{B,\,\lambda}, (P_\omega)_{\omega\in\Omega})$ be two Rota-Baxter family conformal algebras. The tensor product $(A \otimes B, \mu_{A\ot B, \lambda}, (P_\omega)_{\omega\in\Omega})$ is still a Rota-Baxter family conformal algebra with the multiplication
$$
\mu_{A \otimes B, \lambda}:(A \otimes B) \otimes(A \otimes B) \xrightarrow{\mathrm{id}_{A} \otimes \tau \otimes \mathrm{id}_{B}} A \otimes A \otimes B \otimes B\xrightarrow{\mu_{A,\,\lambda}\ot \mu_{B,\,\lambda}} A \otimes B[\lambda],
$$
$$
\left(a_{1} \otimes b_{1}\right) \otimes\left(a_{2} \otimes b_{2}\right) \mapsto\left(a_{1}\cdot_\lambda a_{2}\right) \otimes\left(b_{1}\cdot_\lambda b_{2}\right), a_{1}, a_{2} \in A, b_{1}, b_{2} \in B \text {, }
$$
with $\tau: B \otimes A \rightarrow A \otimes B, b \otimes a \mapsto a \otimes b$ being the switch map,
and define
\[P_\omega(a_1\ot b_1):=P_\omega(a_1)\ot b_1.\]
\end{prop}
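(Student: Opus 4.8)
The plan is to verify in turn the three defining features of a Rota-Baxter family conformal algebra for the triple $(A \otimes B, \mu_{A \otimes B, \lambda}, (P_\omega)_{\omega \in \Omega})$: that $(A \otimes B, \mu_{A \otimes B, \lambda})$ is an associative conformal algebra, that each $P_\omega$ is an $\mathbb{F}[\partial]$-module homomorphism, and that $(P_\omega)_{\omega \in \Omega}$ satisfies the Rota-Baxter family identity of Definition~\ref{defn:rbfconformal}. The first two are structural and follow factorwise from the data on $A$ and $B$; the last is the substantive point, and it is the one I would compute in full.

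First I would pin down the $\mathbb{F}[\partial]$-module structure on $A \otimes B$ and check the conformal algebra axioms. Reading $P_\alpha(a_1)_\lambda a_2$ and $b_1 {}_\lambda b_2$ as polynomials in $\lambda$, the product $(a_1 \otimes b_1)_\lambda(a_2 \otimes b_2) = (a_1 {}_\lambda a_2) \otimes (b_1 {}_\lambda b_2)$ is interpreted as a single-variable element of $(A \otimes B)[\lambda]$ obtained by multiplying the two $\lambda$-polynomials slot by slot. Associativity then reduces coefficientwise to associativity in $A$ and in $B$ (with the outer variable $\lambda + \mu$ distributed consistently to both factors), and the conformal sesquilinearity identities reduce likewise; the one genuinely delicate point is that both $\lambda$-products on the right are evaluated at one and the same $\lambda$, which forces a compatible choice of $\partial$-action on $A \otimes B$ so that $(\partial(a \otimes b))_\lambda(a' \otimes b') = -\lambda\,(a \otimes b)_\lambda(a' \otimes b')$. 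For the operators, $P_\omega(\partial(a \otimes b)) = \partial P_\omega(a \otimes b)$ is immediate from $P_\omega(a \otimes b) = P_\omega(a) \otimes b$ and the fact that each $P_\omega$ is an $\mathbb{F}[\partial]$-homomorphism on $A$.

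The heart of the proof is the Rota-Baxter family identity. Fixing $a_1, a_2 \in A$, $b_1, b_2 \in B$ and $\alpha, \beta \in \Omega$, I would expand the left-hand side as
\[
P_\alpha(a_1 \otimes b_1)_\lambda P_\beta(a_2 \otimes b_2) = \big(P_\alpha(a_1)_\lambda P_\beta(a_2)\big) \otimes (b_1 {}_\lambda b_2)
\]
and apply the Rota-Baxter family identity holding in $A$ to the first tensor factor. On the right-hand side, each of the three summands $P_\alpha(a_1 \otimes b_1)_\lambda(a_2 \otimes b_2)$, $(a_1 \otimes b_1)_\lambda P_\beta(a_2 \otimes b_2)$ and $q\,(a_1 \otimes b_1)_\lambda(a_2 \otimes b_2)$ also factors as an $A$-part tensored with the common factor $b_1 {}_\lambda b_2$. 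Because $P_{\alpha\beta}$ acts on $A \otimes B$ by $P_{\alpha\beta}(x \otimes y) = P_{\alpha\beta}(x) \otimes y$ and is extended $\mathbb{F}[\lambda]$-linearly, pulling $b_1 {}_\lambda b_2$ out through $P_{\alpha\beta}$ collapses both sides to
\[
P_\alpha(a_1)_\lambda P_\beta(a_2) = P_{\alpha\beta}\big(P_\alpha(a_1)_\lambda a_2 + a_1 {}_\lambda P_\beta(a_2) + q\, a_1 {}_\lambda a_2\big)
\]
tensored with $b_1 {}_\lambda b_2$, which holds by the hypothesis that $A$ is a Rota-Baxter family conformal algebra.

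I expect the main obstacle to be bookkeeping rather than conceptual difficulty: one must keep the single $\lambda$-variable synchronized across both tensor slots and confirm that $P_{\alpha\beta}$, once extended $\mathbb{F}[\lambda]$-linearly, genuinely commutes with factoring out $b_1 {}_\lambda b_2$. It is worth recording that the operators $(P_\omega)$ on $A \otimes B$ invoke only the Rota-Baxter family on $A$, so $B$ enters solely as an associative conformal algebra and the Rota-Baxter data on $B$ plays no role in the verification.
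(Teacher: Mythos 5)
Your proof is correct and follows essentially the same route as the paper: the paper also treats the conformal-algebra axioms as routine and verifies only the Rota--Baxter family relation, by expanding $P_\alpha(a_1\ot b_1)\cdot_\lambda P_\beta(a_2\ot b_2)=\bigl(P_\alpha(a_1)\cdot_\lambda P_\beta(a_2)\bigr)\ot(b_1\cdot_\lambda b_2)$, applying the identity in the first tensor slot, and pulling the common factor $b_1\cdot_\lambda b_2$ back through $P_{\alpha\beta}$. Your observation that the Rota--Baxter data on $B$ plays no role, and your flag about synchronizing the single $\lambda$-variable across both slots, are both consistent with (indeed slightly more careful than) the paper's one-line computation.
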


\begin{proof}
We only need to prove the Rota-Baxter family conformal relation, that is
\[P_\alpha(a_1\ot b_1)\cdot_\lambda P_\beta(a_2\ot b_2)
=P_{\alpha\beta}\Big(P_\alpha(a_1\ot b_1)\cdot_\lambda (a_2\ot b_2)+(a_1\ot b_1)\cdot_\lambda P_\beta(a_2\ot b_2)+q(a_1\ot b_1)\cdot_\lambda (a_2\ot b_2)\Big).\]
 We have
\begin{align*}
&P_\alpha(a_1\ot b_1)\cdot_\lambda P_\beta(a_2\ot b_2)\\
={}&(P_\alpha(a_1)\ot b_1)\cdot_\lambda (P_\beta(a_2)\ot b_2)
=\Big(P_\alpha(a_1)\cdot_\lambda P_\beta(a_2)\Big)\ot(b_1\cdot_\lambda b_2)\\
={}&P_{\alpha\beta}\Big((P_\alpha(a_1)\cdot_\lambda a_2)
+(a_1\cdot_\lambda P_\beta(a_2))
+q(a_1\cdot_\lambda a_2)\Big)\ot(b_1\cdot_\lambda b_2)\\
={}&P_{\alpha\beta}\Big(P_\alpha(a_1\ot b_1)\cdot_\lambda(a_2\ot b_2)
+(a_1\ot b_1)\cdot_\lambda P_\beta(a_2\ot b_2)
+q(a_1\ot b_1)\cdot_\lambda (a_2\ot b_2)\Big).\tag*{\qedhere}
\end{align*}
\end{proof}

\begin{defn}\cite{ZGGS}
Let $q$ be a given element in $\mathbb{F}$. A {\bf differential algebra of weight $q$} is a pair $(A, d)$ consisting of an algebra $A$
with a linear operator $d: A \rightarrow A$ that satisfies the differential equation
\[
d(ab) = d(a)b + ad(b) + q d(a)d(b),\text{ for }\, a,b\in A.
\]
Then $d$ is called a {\bf differential operator of weight $q$}.
\end{defn}

\begin{defn}
Let $\Omega$ be a semigroup. A {\bf differential family conformal algebra of weight $q$} is a pair $(A, (d_\omega)_{\omega\in\Omega})$ consisting of an associative conformal algebra $A$
with a family of linear operators $(d_\omega)_{\omega\in\Omega}: A \rightarrow A$, satisfying $\mathbb{F}[\partial]$-module homomorphism
\begin{equation}\label{eq:differential family conformal algebras}
d_{\alpha\beta}(a_\lambda b) = d_\alpha (a)_\lambda b + a_\lambda d_\beta(b) + q d_\alpha(a)_\lambda d_\beta(b),\text{ for }\, a,b\in A.
\end{equation}
Then $(d_\omega)_{\omega\in\Omega}$ is called a {\bf differential family operator of weight $q$}.
\end{defn}

\begin{prop}
Let $\Omega$ be a semigroup and  $(d_\omega)_{\omega\in\Omega}$  a family of invertible maps. Let $(A, (d_\omega)_{\omega\in\Omega})$ be a differential family conformal algebra of weight $q$. Then the collection $(d^{-1}_\omega)_{\omega\in\Omega}$ is a Rota-Baxter family operator of weight $q$.
\end{prop}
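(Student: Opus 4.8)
The plan is to set $P_\omega := d_\omega^{-1}$ for each $\omega \in \Omega$ and to verify directly that this family satisfies the defining identity of a Rota-Baxter family operator of weight $q$ in Definition~\ref{defn:rbfconformal}. Since each $d_\omega$ is by hypothesis an invertible $\mathbb{F}[\partial]$-module homomorphism, its inverse $P_\omega = d_\omega^{-1}$ is again an $\mathbb{F}[\partial]$-module homomorphism; in particular each $P_\omega$ extends $\mathbb{F}[\lambda]$-linearly to the polynomial module $A[\lambda]$, so that it may be applied to expressions of the form $a_\lambda b$. This settles the structural requirement on $(P_\omega)_{\omega\in\Omega}$, and it remains only to establish the multiplicative relation.

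For the core computation I would start from the differential family identity \eqref{eq:differential family conformal algebras} and substitute the elements $a = d_\alpha^{-1}(x)$ and $b = d_\beta^{-1}(y)$ for arbitrary $x, y \in A$ and $\alpha, \beta \in \Omega$. Because $d_\alpha \circ d_\alpha^{-1} = \mathrm{id}$ and $d_\beta \circ d_\beta^{-1} = \mathrm{id}$, the terms $d_\alpha(a)$ and $d_\beta(b)$ on the right-hand side collapse to $x$ and $y$, giving
\begin{equation*}
d_{\alpha\beta}\big( d_\alpha^{-1}(x)_\lambda d_\beta^{-1}(y)\big) = x_\lambda d_\beta^{-1}(y) + d_\alpha^{-1}(x)_\lambda y + q\, x_\lambda y.
\end{equation*}
Applying the inverse operator $d_{\alpha\beta}^{-1}$ to both sides — which is legitimate since $d_{\alpha\beta}$ is invertible and its inverse acts $\mathbb{F}[\lambda]$-linearly on $A[\lambda]$ — and then rewriting $d_\omega^{-1}$ as $P_\omega$ yields
\begin{equation*}
P_\alpha(x)_\lambda P_\beta(y) = P_{\alpha\beta}\big( P_\alpha(x)_\lambda y + x_\lambda P_\beta(y) + q\, x_\lambda y\big),
\end{equation*}
which is precisely the Rota-Baxter family relation of weight $q$.

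There is no deep obstacle here: the argument is a formal inversion of the differential identity, entirely parallel to the classical correspondence between invertible differential operators of weight $q$ and Rota-Baxter operators of weight $q$. The one point that deserves care — and which I would make explicit — is the compatibility of the operators with the $\lambda$-multiplication landing in $A[\lambda]$ rather than in $A$: I must check that $d_{\alpha\beta}$ and its inverse, when applied to $a_\lambda b$, act coefficient-wise in $\lambda$, so that the substitution of $d_\alpha^{-1}(x)$, $d_\beta^{-1}(y)$ followed by cancellation with $d_{\alpha\beta}^{-1}$ is valid at the level of polynomials in $\lambda$. This is guaranteed by each $d_\omega$ being $\mathbb{F}[\partial]$-linear, hence extendable $\mathbb{F}[\lambda]$-linearly to $A[\lambda]$, and it is the only step where the conformal (as opposed to purely associative) setting genuinely intervenes.
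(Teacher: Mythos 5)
Your proposal is correct and follows exactly the paper's own argument: substitute $a=d_\alpha^{-1}(x)$, $b=d_\beta^{-1}(y)$ into the differential family identity and invert $d_{\alpha\beta}$. The extra remark about the operators acting $\mathbb{F}[\lambda]$-linearly on $A[\lambda]$ is a reasonable point of care that the paper leaves implicit, but it does not change the substance of the proof.
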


\begin{proof}
Since $(d_\omega)_{\omega\in\Omega}$ is an invertible family, suppose that $a=d^{-1}_\alpha(u)$ and $b=d^{-1}_\beta(v)$ in Eq.~(\ref{eq:differential family conformal algebras}). Then we have
\[d_{\alpha\beta}(d^{-1}_\alpha(u)_\lambda d^{-1}_\beta(v))=u_\lambda d^{-1}_\beta(v)+d^{-1}_\alpha(u)_\lambda v+qu_\lambda v,\]
which is equivalent to
\[d^{-1}_\alpha(u)_\lambda d^{-1}_\beta(v)=d^{-1}_{\alpha\beta}(d^{-1}_\alpha(u)_\lambda v+u_\lambda d^{-1}_\beta(v)+qu_\lambda v).\]
This shows that the collection $(d^{-1}_\omega)_{\omega\in\Omega}$ is a Rota-Baxter family operator of weight $q$.
\end{proof}

\subsection{Rota-Baxter family conformal modules}
In this subsection, we first recall conformal module from~\cite{Kac1998, HB1, HB2}, then we give the definition of Rota-Baxter family conformal bimodule and construct a new Rota-Baxter family conformal algebra via semi-direct product.

\begin{defn}\cite{Kac1998}\label{defn:associative conformal algebra}
{\rm  Let $A$ be an associative conformal algebra and $M$ a $\mathbb{F}[\partial]$-module.
\begin{enumerate}
\item A left module $M$ over an associative conformal algebra $A$ is a $\mathbb{F}[\partial]$-module endowed with a $\mathbb{F}$-bilinear map $A\times M \rightarrow M[\lambda]$, $(a,m)\mapsto a \cdot_\lambda m$, and for all $a,b\in A$ and $m\in M$ satisfying
\begin{align*}
(\partial a)\cdot_\lambda m&=-\lambda a\cdot_\lambda m,\\
a\cdot_\lambda (\partial m)&=(\partial+\lambda)(a\cdot_\lambda m),\\
(a\cdot_\lambda b)\cdot_{\lambda+\mu} m&=a\cdot_\lambda(b\cdot_\mu m).
\end{align*}

\item A right module $M$ over an associative conformal algebra $A$ is a $\mathbb{F}[\partial]$-module endowed with a $\mathbb{F}$-bilinear map $M\times A \rightarrow M[\lambda]$, $(m,a)\mapsto m \cdot_\lambda a$, and for all $a,b\in A$ and $m\in M$ satisfying
\begin{align*}
(\partial m)\cdot_\lambda a&=-\lambda m\cdot_\lambda a,\\
m\cdot_\lambda(\partial a)&=(\partial+\lambda)(m\cdot_\lambda a),\\
(m\cdot_\lambda a)\cdot_{\lambda+\mu} b&=m\cdot_\lambda(a\cdot_\mu b).
\end{align*}

\item $M$ is called a conformal bimodule of $A$ (or conformal $A$-bimodule) if it is both a left conformal module and a right conformal module, and for all $a,b\in A$ and $m\in M$ satisfying the following compatible condition
\begin{eqnarray*}\label{B}
(a\cdot_\lambda m)\cdot_{\lambda+\mu} b=a\cdot_\lambda(m\cdot_\mu b).
\end{eqnarray*}
\end{enumerate}}
\end{defn}

\begin{defn}\label{defn:conformal homomorphism}\cite{Kac1998}
Let $M, N$ be two $\mathbb{F}[\partial]$-modules.  {\bf A conformal linear map} from  $M$ to $N$ is a $\mathbb{F}$-linear map $f: M \rightarrow N[\lambda]$, denoted $f_{\lambda}: M \rightarrow N$, such that $f_{\lambda} \partial=(\partial+\lambda) f_{\lambda}$. The space of such maps is denoted by $\operatorname{Chom}(M, N)$. It has canonical structures of a $\mathbb{F}[\partial]$-module
\begin{align*}
(\partial f)_{\lambda}&=-\lambda f_{\lambda},\,\text{ for }\,f \in \operatorname{Chom}(M, N).
\end{align*}
\end{defn}
In the special case $M$, let $\operatorname{Cend}(M)=\operatorname{Chom}(M, M)$ denote the space of conformal linear endomorphisms of $M$. Then $\operatorname{Cend}(M)$ carries the natural structure of an associative conformal algebra, defined by
$$
\left(f_{\lambda} g\right)_{\mu} m:=f_{\lambda}\big(g_{\mu-\lambda} m\big), \quad f, g \in \operatorname{Cend}(M), m \in M.
$$

\begin{remark}\cite{HB1}
Set $a \cdot_{\lambda} v=l_{A}(a)_{\lambda} v$ and $v \cdot_{\lambda} a=r_{A}(a)_{-\lambda-\partial} v$. Then a structure of a bimodule $M$ over an associative conformal algebra $A$ is the same as two $\mathbb{F}[\partial]$-module homomorphisms $l_{A}$ and $r_{A}$ from $A$ to $\operatorname{Cend}(M)$ such that the following conditions hold:
\begin{align*}
l_{A}\left(a_{\lambda} b\right)_{\lambda+\mu} v&=l_{A}(a)_{\lambda}(l_{A}(b)_{\mu} v), \\
r_{A}(b)_{-\lambda-\mu-\partial}(r_{A}(a)_{-\lambda-\partial} v)&=r_{A}(a_{\mu} b)_{-\lambda-\partial} v ,\\
l_{A}(a)_{\lambda}(r_{A}(b)_{-\mu-\partial} v)&=r_{A}(b)_{-\lambda-\mu-\partial}(l_{A}(a)_{\lambda} v),
\end{align*}
for all $a,b\in A$ and $v\in M$. Denoted by this bimodule by $(M,l_A,r_A).$
\end{remark}

\begin{defn}\label{Def: Rota-Baxter family bimodules}
Let $(A,\mu_\lambda,(P_\omega)_{\omega\in\Omega})$ be a Rota-Baxter family conformal algebras of weight $q$ and $M$ be a bimodule
over associative conformal algebra $(A,\mu_\lambda)$. We say that {\bf $M$ is a bimodule
over Rota-Baxter family conformal algebras} $(A,\mu_\lambda, (P_\omega)_{\omega\in\Omega})$  or a Rota-Baxter family conformal bimodule if $M$ is endowed with a family of
linear operators $(P_{M,\,\omega})_{\omega\in\Omega}: M\rightarrow M$ such that the following
equations
\begin{eqnarray}
\label{eq:left}P_\alpha(a)_\lambda P_{M,\,\beta}(m)&=&P_{M,\alpha\beta}\big(a_\lambda P_{M,\,\beta}(m)+P_\alpha(a)_\lambda m+q a_\lambda m\big),\\
\label{eq:right}P_{M,\,\alpha}(m)_\lambda P_\beta(a)&=&P_{M,\,\alpha\beta}\big(m_\lambda P_\beta(a)+P_{M,\,\alpha}(m)_\lambda a+q m_\lambda a\big),
\end{eqnarray}
hold for any $a\in A$ and $m\in M$.
\end{defn}
Obviously, $(A,\mu_\lambda, (P_\omega)_{\omega\in\Omega})$ itself is a bimodule over the Rota-Baxter family conformal algebras $(A,\mu_\lambda,(P_\omega)_{\omega\in\Omega}$), called the regular Rota-Baxter family conformal  bimodule.

\begin{prop}  \label{Prop: trivial extension of Rota-Baxter family bimodule}
Let $(A,\mu_\lambda,(P_\omega)_{\omega\in\Omega})$ be a Rota-Baxter family conformal algebras and $M$ be a bimodule over associative conformal algebra $(A,\mu_\lambda)$. Denoted by $\iota: A\to A\oplus M, a\mapsto (a, 0)$ and $\pi: A\oplus M\to A, (a, m)\mapsto a$.
\begin{enumerate}
\item \label{item:multiplication} Then $A\oplus M$
	 becomes  an associative conformal algebra whose  multiplication is
	\begin{eqnarray*}(a,m)\bullet_\lambda (b,n):=(a_\lambda b, a_\lambda n+m_\lambda b).\end{eqnarray*}
\item \label{item:Rota-Baxter conformal construction}   Then $A\oplus M$ is a Rota-Baxter family conformal algebra such that $\iota$ and $\pi$ are both morphisms of Rota-Baxter family conformal algebras if and only if $M$ is a Rota-Baxter family conformal bimodule over $A$.

 This new Rota-Baxter family conformal algebras will be denoted by $A\ltimes  M$, called the semi-direct product (or trivial extension) of $A$ by $M$.
 \end{enumerate}
	\end{prop}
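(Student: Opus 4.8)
The plan is to treat the two parts separately and, throughout, to take the Rota-Baxter family operator on $A\oplus M$ to be the block-diagonal family $\mathcal{P}_\omega(a,m):=(P_\omega(a),P_{M,\omega}(m))$. For part~(\ref{item:multiplication}) I would just verify the three conformal-algebra axioms for $\bullet_\lambda$ coordinatewise. Conformal sesquilinearity-I in each slot splits into the sesquilinearity of $\mu_\lambda$ on $A$ in the first coordinate and the sesquilinearity of the left/right actions of Definition~\ref{defn:associative conformal algebra} in the second. For associativity, the first coordinate is exactly the associativity of $A$, while expanding the second coordinate of $((a,m)\bullet_\lambda(b,n))\bullet_{\lambda+\mu}(c,p)=(a,m)\bullet_\lambda((b,n)\bullet_\mu(c,p))$ produces the three terms $(a_\lambda b)_{\lambda+\mu}p$, $(a_\lambda n)_{\lambda+\mu}c$, $(m_\lambda b)_{\lambda+\mu}c$, which match their right-hand counterparts precisely by left-module associativity, the left-right compatibility condition, and right-module associativity respectively. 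So this part is routine bookkeeping with no new input.

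For part~(\ref{item:Rota-Baxter conformal construction}), I would first note that the requirement that $\iota$ and $\pi$ be morphisms pins down the operator. Since $\iota$ and $\pi$ are already conformal-algebra maps by part~(\ref{item:multiplication}), the condition $\mathcal{P}_\omega\circ\iota=\iota\circ P_\omega$ forces $\mathcal{P}_\omega(a,0)=(P_\omega(a),0)$, and $\pi\circ\mathcal{P}_\omega=P_\omega\circ\pi$ forces the $A$-component of $\mathcal{P}_\omega(a,m)$ to be $P_\omega(a)$; by $\mathbb{F}[\partial]$-linearity this yields $\mathcal{P}_\omega(a,m)=(P_\omega(a),P_{M,\omega}(m))$, where $P_{M,\omega}(m)$ is the $M$-component of $\mathcal{P}_\omega(0,m)$ and $(P_{M,\omega})_{\omega\in\Omega}$ is a family of $\mathbb{F}[\partial]$-module homomorphisms. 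Thus the only remaining datum is exactly the family appearing in Definition~\ref{Def: Rota-Baxter family bimodules}. Expanding the Rota-Baxter family relation for $\mathcal{P}$ on a pair $(a,m),(b,n)$ and reading it off coordinatewise, the first coordinate reduces to the Rota-Baxter family relation of $(A,\mu_\lambda,(P_\omega)_{\omega\in\Omega})$, which holds by hypothesis and carries no information about $M$.

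The whole content therefore sits in the second coordinate, which reduces to the single identity
\begin{align*}
P_\alpha(a)_\lambda P_{M,\beta}(n)+P_{M,\alpha}(m)_\lambda P_\beta(b)={}&P_{M,\alpha\beta}\big(P_\alpha(a)_\lambda n+a_\lambda P_{M,\beta}(n)+q\,a_\lambda n\\
&{}+P_{M,\alpha}(m)_\lambda b+m_\lambda P_\beta(b)+q\,m_\lambda b\big).
\end{align*}
This is visibly the termwise sum of \eqref{eq:left} (applied to $a$ and $n$) and \eqref{eq:right} (applied to $m$ and $b$). This immediately settles the ``if'' direction: if $M$ is a Rota-Baxter family conformal bimodule, both summand relations hold, hence so does their sum, and $A\oplus M$ is a Rota-Baxter family conformal algebra with $\iota,\pi$ morphisms.

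The subtle point, which I expect to be the main obstacle, is the ``only if'' direction: one combined identity must be shown to force two separate conditions. The resolution is a restriction trick. Specializing the combined second-coordinate identity to $m=0$ annihilates every term containing $P_{M,\alpha}(m)$ or $m$ (using $P_{M,\alpha}(0)=0$ and $\mathbb{F}$-bilinearity of the actions), leaving precisely \eqref{eq:left}; symmetrically, specializing to $n=0$ leaves precisely \eqref{eq:right}. Hence the Rota-Baxter family relation on $A\oplus M$ is equivalent to the conjunction of \eqref{eq:left} and \eqref{eq:right}, i.e.\ to $M$ being a Rota-Baxter family conformal bimodule over $A$. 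This establishes the equivalence, and the notation $A\ltimes M$ for the resulting structure is then merely a definition.
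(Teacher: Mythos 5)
Your proposal is correct and is exactly the direct coordinatewise verification that the paper leaves implicit (its proof reads only ``It is the direct calculation''): part (1) reduces to the three bimodule associativity/compatibility axioms, and in part (2) the second coordinate of the Rota-Baxter family relation is the sum of Eqs.~(\ref{eq:left}) and (\ref{eq:right}), with the specializations $m=0$ and $n=0$ recovering each separately for the ``only if'' direction. Your additional observation that the morphism conditions on $\iota$ and $\pi$ force the operator on $A\oplus M$ to be block-diagonal is a worthwhile precision that the paper does not spell out.
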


\begin{proof}
It is the direct calculation.
\end{proof}

\subsection{$\Omega$-associative conformal algebras}

In this subsection, we mainly propose the concept of $\Omega$-associative conformal  algebra and construct new Rota-Baxter family bimodules from old ones.

\begin{defn}\label{defn:conformal algebras}
Let $\Omega$ be a semigroup. {\bf An $\Omega$-associative conformal algebra $A$} is a $\mathbb{F}[\partial]$-module endowed with a $\mathbb{F}$-bilinear map
\[\Mu\lambda,\alpha,\beta;: A\ot A\ra A[\lambda],\quad (a, b)\mapsto \Mu\lambda,\alpha,\beta;(a, b):=a\Cdot\lambda,\alpha,\beta; b,\]
and for all $a,b,c\in A$ satisfying the following axiom
\begin{align*}
(\partial a)\cdot_{\lambda;\,\alpha,\,\beta} b=&-\lambda a\cdot_{\lambda;\,\alpha,\,\beta} b,\quad a\cdot_{\lambda;\,\alpha,\,\beta}\partial b=(\partial+\lambda)a\cdot_{\lambda;\,\alpha,\,\beta} b,\quad\text{(conformal sesquilinearity-II)}\\
&(a\cdot_{\lambda;\,\alpha,\,\beta} b)\cdot_{\lambda+\mu;\,\alpha\beta,\,\gamma}c=a\cdot_{\lambda;\,\alpha,\,\beta\gamma}
(b\cdot_{\mu;\,\beta,\gamma} c).\quad \text{(associativity)}
\end{align*}
\end{defn}

\begin{defn}\label{defn:bimodule}
Let $(A , (\Mu \lambda,\alpha,\beta;)_{\alpha,\, \beta \in \Omega})$ be an $\Omega$-associative  conformal algebra. A {\bf bimodule} over it consists of a vector space $M$ together with a collection of bilinear maps and for $a, b \in A, m \in M \text{ and } \alpha, \beta, \gamma \in \Omega$ such that
\begin{align*}
  A \otimes M \rightarrow M[\lambda] ,& \quad (a, m) \mapsto a \Cdot \lambda,\alpha,\beta; m, \\
 M \otimes A \rightarrow M[\lambda],  &  \quad (m, a) \mapsto m \Cdot \lambda,\alpha,\beta; a,
\end{align*}
such that
\begin{align*}
(\partial a)\Cdot \lambda,\alpha,\beta;v=-\lambda a\Cdot \lambda,\alpha,\beta;v,&\quad a\Cdot \lambda,\alpha,\beta; (\partial v)=(\partial+\lambda)a\Cdot \lambda,\alpha,\beta;v,\\
(a \Cdot \lambda,\alpha,\beta; b) \Cdot \lambda+\mu,\alpha\beta,\gamma; m = a \Cdot \lambda,\alpha,\beta\gamma; (b \Cdot \mu,\beta,\gamma;m), &\quad
(a \Cdot \lambda,\alpha,\beta; m)\Cdot \lambda+\mu,\alpha\beta,\gamma;  b = a \Cdot \lambda,\alpha,\beta\gamma; (m \Cdot \mu,\beta,\gamma; b),\\
 (m \Cdot \lambda,\alpha,\beta; a)\Cdot \lambda+\mu,\alpha\beta,\gamma;  b &= m \Cdot \lambda,\alpha,\beta\gamma; (a \Cdot \mu,\beta,\gamma; b).
\end{align*}
\end{defn}

Combining Definition~\ref{defn:rbfconformal} and Definition~\ref{defn:conformal algebras}, we immediately obtain
\begin{equation*}
P_\alpha(a)\cdot_{\lambda;\,\alpha,\,\beta}P_\beta(b)
=P_{\alpha\beta}(P_\alpha(a)\cdot_{\lambda;\,\alpha,\,\beta}b
+a\cdot_{\lambda;\,\alpha,\,\beta}P_\beta(b)+q a\cdot_{\lambda;\,\alpha,\,\beta}b),
\end{equation*}
which we call the pair $(A, (\mu_{\lambda;\,\alpha,\,\beta})_{\alpha,\,\beta\in\Omega},(P_\omega)_{\omega\in\Omega})$ {\bf Rota-Baxter family $\Omega$-associative conformal algebra}.

 \begin{defn}
 Let $\left(A,(P_\omega)_{\omega\in\Omega}\right)$ and $\left(A^{\prime},(P'_\omega)_{\omega\in\Omega}\right)$ be two Rota-Baxter family $\Omega$-associative conformal algebras of weight $q$. A family of maps $(f_\omega)_{\omega\in\Omega}$ is called a Rota-Baxter family $\Omega$-associative conformal algebra morphism of weight $q$ if $f_\omega:A\ra B$ satisfy $f_{\omega}\circ P_\omega=P'_\omega\circ f_{\omega}$ and $f_{\,\alpha\beta}\circ\mu_{\lambda;\,\alpha,\,\beta}=\mu'_{\lambda;\,\alpha,\,\beta}\circ(f_{\alpha}\ot f_{\beta})$, for any $\omega, \alpha, \beta\in\Omega.$
 \end{defn}

  The following interesting result is useful and interesting.
\begin{prop} \label{Prop: new RBF algebra}
	Let $(A, (\mu_{\lambda;\,\alpha,\,\beta})_{\alpha,\,\beta\in\Omega}, (P_\omega)_{\omega\in\Omega})$ be a Rota-Baxter family $\Omega$-associative conformal algebras of weight $q$. Define a new binary operation as:
\begin{eqnarray*}
a\Star\lambda,\alpha,\beta; b:=a\cdot_{\lambda;\,\alpha,\,\beta} P_\beta(b)+P_\alpha(a)\cdot_{\lambda;\,\alpha,\,\beta} b+q a\cdot_{\lambda;\,\alpha,\,\beta} b
\end{eqnarray*}
for any $a,b\in A$. Then
\begin{enumerate}

\item   the family $(\Star\lambda,\alpha,\beta;)_{\alpha,\,\beta\in\Omega} $ is associative and  $(A, (\Star\lambda,\alpha,\beta;)_{\alpha,\,\beta\in\Omega})$ is a new $\Omega$-associative conformal algebra;

    \item the triple  $(A, (\Star\lambda,\alpha,\beta;)_{\alpha,\,\beta\in\Omega} ,(P_\omega)_{\omega\in\Omega})$ also forms a Rota-Baxter family $\Omega$-associative conformal algebras of weight $q$  and denote it by $A_{\star}$;

\item the family $(P_\omega)_{\omega\in\Omega}:(A,(\Star\lambda,\alpha,\beta;)_{\alpha,\,\beta\in\Omega}, (P_\omega)_{\omega\in\Omega})\rightarrow (A, (\cdot_{\lambda;\,\alpha,\,\beta})_{\alpha,\,\beta\in\Omega}, (P_\omega)_{\omega\in\Omega})$ is a  morphism of Rota-Baxter family $\Omega$-associative conformal algebras.
    \end{enumerate}
	\end{prop}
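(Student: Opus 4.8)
The plan is to isolate a single master identity and obtain all three parts from it. The key observation is that the defining relation of the Rota-Baxter family $\Omega$-associative conformal algebra $(A,(\Cdot\lambda,\alpha,\beta;)_{\alpha,\,\beta\in\Omega},(P_\omega)_{\omega\in\Omega})$ may be read as
\begin{equation*}
P_\alpha(a)\Cdot\lambda,\alpha,\beta; P_\beta(b)=P_{\alpha\beta}\bigl(a\Star\lambda,\alpha,\beta; b\bigr),\tag{$\ast$}
\end{equation*}
since the right-hand side of that relation is precisely $P_{\alpha\beta}$ applied to the three summands defining $a\Star\lambda,\alpha,\beta; b$. I would establish $(\ast)$ first, because it already settles part (3): the multiplicativity condition for $(P_\omega)_{\omega\in\Omega}\colon A_\star\to A$ is $P_{\alpha\beta}\circ\Star\lambda,\alpha,\beta;=\Cdot\lambda,\alpha,\beta;\circ(P_\alpha\ot P_\beta)$, which is exactly $(\ast)$, while the compatibility with the operators reduces to the tautology $P_\omega\circ P_\omega=P_\omega\circ P_\omega$.

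For part (1) I would first dispatch conformal sesquilinearity-II for $\Star\lambda,\alpha,\beta;$: each $P_\omega$ is an $\mathbb{F}[\partial]$-module homomorphism, so $P_\omega(\partial a)=\partial P_\omega(a)$, and substituting this into the three summands of $\star$ and applying the sesquilinearity of $\Cdot\lambda,\alpha,\beta;$ termwise yields the two required identities at once. The substantive step is associativity. I would expand $(a\Star\lambda,\alpha,\beta; b)\Star\lambda+\mu,\alpha\beta,\gamma; c$ by the definition of $\star$, replace the inner factor $P_{\alpha\beta}(a\Star\lambda,\alpha,\beta; b)$ by $P_\alpha(a)\Cdot\lambda,\alpha,\beta; P_\beta(b)$ using $(\ast)$, and then apply the associativity axiom of Definition~\ref{defn:conformal algebras} to push every resulting triple product into the right-normalized shape $x\Cdot\lambda,\alpha,\beta\gamma;(y\Cdot\mu,\beta,\gamma; z)$. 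Expanding the other side $a\Star\lambda,\alpha,\beta\gamma;(b\Star\mu,\beta,\gamma; c)$ symmetrically and using $(\ast)$ once on $P_{\beta\gamma}(b\Star\mu,\beta,\gamma; c)$, I expect both sides to collapse to the same seven-term sum, whose leading term is $a\Cdot\lambda,\alpha,\beta\gamma; P_{\beta\gamma}(b\Star\mu,\beta,\gamma; c)$ and whose remaining six terms are built from the double and triple products of $a,b,c$ with one or two insertions of $P$, weighted by $1$, $q$, or $q^2$; matching them proves associativity. This is the conformal family analogue of the classical fact that the double product of a Rota-Baxter algebra is associative.

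For part (2) it remains to verify the Rota-Baxter family relation for the pair $(\Star\lambda,\alpha,\beta;,(P_\omega)_{\omega\in\Omega})$, namely
\begin{equation*}
P_\alpha(a)\Star\lambda,\alpha,\beta; P_\beta(b)=P_{\alpha\beta}\bigl(P_\alpha(a)\Star\lambda,\alpha,\beta; b+a\Star\lambda,\alpha,\beta; P_\beta(b)+q\,a\Star\lambda,\alpha,\beta; b\bigr).
\end{equation*}
I would expand the left-hand side into its three $\Cdot\lambda,\alpha,\beta;$-summands and apply $(\ast)$ to each of them, which rewrites the entire expression as $P_{\alpha\beta}$ of a fixed combination of $P_\alpha(a)\Cdot\lambda,\alpha,\beta; P_\beta(b)$, $a\Cdot\lambda,\alpha,\beta; P_\beta(P_\beta(b))$, $P_\alpha(P_\alpha(a))\Cdot\lambda,\alpha,\beta; b$ and their $q$-weighted versions; expanding the right-hand side directly from the definition of $\star$ produces $P_{\alpha\beta}$ of the identical combination, so the two sides agree. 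The only genuine obstacle throughout is bookkeeping: keeping the conformal parameters $\lambda,\mu$ and the semigroup labels $\alpha,\beta,\gamma$ correctly attached through the associativity rewrites in part (1), where a single misplaced label would break the term-by-term cancellation. Conceptually every step is forced once $(\ast)$ and the associativity axiom are in hand.
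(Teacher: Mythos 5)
Your proposal is correct: the master identity $(\ast)$ is just a restatement of the Rota--Baxter family relation, and organizing the verification around it does yield part (3) immediately, reduces part (2) to applying $(\ast)$ to each of the three summands of $P_\alpha(a)\star_{\lambda;\,\alpha,\,\beta}P_\beta(b)$ (each of which has the form $P_\alpha(x)\cdot_{\lambda;\,\alpha,\,\beta}P_\beta(y)$), and makes part (1) a seven-term matching under the associativity of $\cdot_{\lambda;\,\alpha,\,\beta}$ exactly as you describe. The paper itself dismisses this proposition with ``It is the direct calculation,'' so your argument is precisely the omitted direct calculation, cleanly packaged.
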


\begin{proof}
It is the direct calculation.
\end{proof}

One can also construct new Rota-Baxter family bimodules from old ones.

\begin{prop}\label{Prop:new-bimodule}
Let $\Omega$ be a semigroup. Let $(A, (\mu_{\lambda;\,\alpha,\,\beta})_{\alpha,\,\beta\in\Omega},(P_\omega)_{\omega\in\Omega})$ be a Rota-Baxter family $\Omega$-associative conformal algebra of weight $q$ and $(M, (P_{M,\,\omega})_{\omega\in\Omega})$ be a Rota-Baxter family $\Omega$-associative conformal bimodule over it.  For $a,b\in A,m\in M$, and $\alpha,\beta\in\Omega,$ define a family of left actions $(\Rhd\lambda,\alpha,\beta;)_{\alpha,\,\beta\in\Omega} $ and a family of right actions $(\Lhd\lambda,\alpha,\beta;)_{\alpha,\,\beta\in\Omega}$ as follows:
\begin{align}
\label{eq:right module}\Rhd \lambda,\alpha,\beta; : A \otimes M \rightarrow M, \quad & a \Rhd \lambda,\alpha,\beta; m := P_\alpha(a) \cdot_{\lambda;\,\alpha,\,\beta} m - P_{M,\,\alpha \beta} (a \cdot_{\lambda;\,\alpha,\,\beta} m),\\
\label{eq:left module}\Lhd \lambda,\alpha,\beta; : M \otimes A \rightarrow M ,\quad & m \Lhd \lambda,\alpha,\beta; a := m \cdot_{\lambda;\,\alpha,\,\beta} P_\beta(a) - P_{M,\,\alpha \beta} (m \cdot_{\lambda;\,\alpha,\,\beta} a).
\end{align}
Then $M$ is a Rota-Baxter family $\Omega$-associative bimodule over $A_{\star}.$
\end{prop}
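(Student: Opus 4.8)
The plan is to check, axiom by axiom, that $(M,(\rhd_{\lambda;\alpha,\beta})_{\alpha,\beta\in\Omega},(\lhd_{\lambda;\alpha,\beta})_{\alpha,\beta\in\Omega})$ satisfies the requirements of a bimodule over the $\Omega$-associative conformal algebra $A_\star$ in the sense of Definition~\ref{defn:bimodule}, where $A_\star$ is the algebra furnished by Proposition~\ref{Prop: new RBF algebra}. The conformal sesquilinearity relations for $\rhd$ and $\lhd$ are immediate: since each $P_\omega$ and each $P_{M,\omega}$ is an $\mathbb{F}[\partial]$-module homomorphism and the original actions $\cdot_{\lambda;\alpha,\beta}$ are sesquilinear, the defining formulas \eqref{eq:right module} and \eqref{eq:left module} are $\mathbb{F}$-bilinear and transform correctly under $\partial$. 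Hence the substance is the three associativity/compatibility identities
\begin{align*}
(a\star_{\lambda;\alpha,\beta} b)\rhd_{\lambda+\mu;\alpha\beta,\gamma} m &= a\rhd_{\lambda;\alpha,\beta\gamma}(b\rhd_{\mu;\beta,\gamma} m),\\
(a\rhd_{\lambda;\alpha,\beta} m)\lhd_{\lambda+\mu;\alpha\beta,\gamma} b &= a\rhd_{\lambda;\alpha,\beta\gamma}(m\lhd_{\mu;\beta,\gamma} b),\\
(m\lhd_{\lambda;\alpha,\beta} a)\lhd_{\lambda+\mu;\alpha\beta,\gamma} b &= m\lhd_{\lambda;\alpha,\beta\gamma}(a\star_{\mu;\beta,\gamma} b),
\end{align*}
for all $a,b\in A$, $m\in M$ and $\alpha,\beta,\gamma\in\Omega$.

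For the left-module identity I would expand the left-hand side by \eqref{eq:right module} and then replace $P_{\alpha\beta}(a\star_{\lambda;\alpha,\beta} b)$ by $P_\alpha(a)\cdot_{\lambda;\alpha,\beta} P_\beta(b)$; this substitution is nothing but the Rota-Baxter family conformal relation, since the inner bracket defining $\star$ is exactly what the Rota-Baxter family axiom collapses. After rebracketing the resulting $\cdot$-products with the left-module associativity of the original bimodule, the two ``purely original'' terms $P_\alpha(a)\cdot(P_\beta(b)\cdot m)$ cancel against each other, and the remaining cross terms of the form $P_\alpha(a)\cdot_{\lambda;\alpha,\beta\gamma}\bigl(P_{M,\beta\gamma}(b\cdot_{\mu;\beta,\gamma} m)\bigr)$ are absorbed by a single application of the left Rota-Baxter family bimodule relation \eqref{eq:left}. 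The right-module identity is entirely dual: one expands via \eqref{eq:left module}, uses the Rota-Baxter family relation on $P_{\beta\gamma}(a\star_{\mu;\beta,\gamma} b)$, the right-module associativity of the original bimodule, and finally the right Rota-Baxter family bimodule relation \eqref{eq:right}.

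The compatibility identity is the most involved step, and I expect it to be the main obstacle. Expanding both sides by \eqref{eq:right module} and \eqref{eq:left module} produces, after cancelling the unique term free of $P_M$, namely $P_\alpha(a)\cdot(m\cdot P_\gamma(b))$ (which matches on the two sides precisely by the original bimodule compatibility $(a\cdot_{\lambda;\alpha,\beta} m)\cdot b = a\cdot(m\cdot b)$), two groups of $P_M$-correction terms; reconciling them requires invoking \emph{both} \eqref{eq:left} and \eqref{eq:right} at once. The genuine difficulty throughout is bookkeeping rather than conceptual: one must keep the semigroup indices $\alpha,\beta,\gamma$ and their products $\alpha\beta,\ \beta\gamma,\ \alpha\beta\gamma$ aligned with the conformal parameters $\lambda,\mu,\lambda+\mu$, so that at every application of \eqref{eq:left} or \eqref{eq:right} the index carried by the operator $P_{M,\cdot}$ and the indices on the relevant $\cdot$-products match exactly. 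Once these three identities are verified, $M$ carries the asserted bimodule structure over $A_\star$.
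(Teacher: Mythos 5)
Your outline of the three bimodule identities for $\rhd$ and $\lhd$ over $A_\star$ — the two associativity conditions and the middle compatibility — follows the same route as the paper: expand by the defining formulas, collapse $P_{\alpha\beta}(a\star_{\lambda;\,\alpha,\,\beta}b)$ to $P_\alpha(a)\cdot_{\lambda;\,\alpha,\,\beta}P_\beta(b)$ via the Rota--Baxter family relation, rebracket with the original bimodule axioms, and absorb the cross terms with Eqs.~\eqref{eq:left} and \eqref{eq:right}. That part is fine.

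However, there is a genuine gap: the proposition asserts that $M$ is a \emph{Rota--Baxter family} $\Omega$-associative bimodule over $A_\star$, not merely a bimodule over the $\Omega$-associative conformal algebra $A_\star$ in the sense of Definition~\ref{defn:bimodule}. By Definition~\ref{Def: Rota-Baxter family bimodules} this requires, in addition to the three identities you list, that the operators $(P_{M,\,\omega})_{\omega\in\Omega}$ remain compatible with the \emph{new} actions, i.e.
\begin{align*}
 P_\alpha(a)\rhd_{\lambda;\,\alpha,\,\beta}  P_{M,\,\beta}(m)&=P_{M,\,\alpha\beta}\big(a\rhd_{\lambda;\,\alpha,\,\beta} P_{M,\,\beta}(m)+P_\alpha(a)\rhd_{\lambda;\,\alpha,\,\beta}  m+q\, a\rhd_{\lambda;\,\alpha,\,\beta}  m\big),\\
P_{M,\,\alpha}(m)\lhd_{\lambda;\,\alpha,\,\beta}  P_\beta(a)&=P_{M,\,\alpha\beta}\big(m \lhd_{\lambda;\,\alpha,\,\beta}  P_\beta(a)+P_{M,\,\alpha}(m) \lhd_{\lambda;\,\alpha,\,\beta}  a+q\, m\lhd_{\lambda;\,\alpha,\,\beta}  a\big).
\end{align*}
Your proposal never addresses these, and your closing sentence ("Once these three identities are verified, $M$ carries the asserted bimodule structure over $A_\star$") is therefore not justified as stated. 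The paper devotes the final block of its proof to exactly this verification, showing by a direct computation (using Eq.~\eqref{eq:left}) that both sides of the first identity reduce to $P_{M,\,\alpha\beta}\big(P_\alpha^2(a)\cdot_{\lambda;\,\alpha,\,\beta} m + q\, P_\alpha(a)\cdot_{\lambda;\,\alpha,\,\beta} m\big)$, with the second identity handled symmetrically. You should add this step; it is not a formality, since it is precisely what makes $M$ a module over $A_\star$ \emph{as a Rota--Baxter family algebra} and is what is used later when the cohomology of the Rota--Baxter family operator is built from the Hochschild complex of $A_\star$ with coefficients in $M$.
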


\begin{proof}
It is easy to see that the two $\lambda$-actions defined by Eqs.~(\ref{eq:right module}-\ref{eq:left module}) are conformal sequilinear maps.
	Firstly, we show that $(M, (\Rhd \lambda,\alpha,\beta;)_{\alpha,\,\beta\in\Omega})$ is a left module over $(A, (\Star\lambda,\alpha,\beta;)_{\alpha,\,\beta\in\Omega})$. On the one hand, we have
	\begin{align*}
	&a\Rhd \lambda,\alpha,\beta\gamma;(b\Rhd \mu,\beta,\gamma;m)\\
={}& a\Rhd \lambda, \alpha,\beta\gamma;(P_\beta(b)\cdot_{\mu;\,\beta,\,\gamma} m-P_{M,\,\beta\gamma}(b\cdot_{\mu;\,\beta,\,\gamma} m))\\
	={}&P_\alpha(a)\Cdot \lambda,\,\alpha,\,\beta\gamma;\Big(P_\beta(b)\Cdot\mu,\,\beta,\,\gamma; m-P_{M,\,\beta\gamma}(b\Cdot\mu,\,\beta,\,\gamma; m)\Big)\\
&-P_{M,\,\alpha\beta\gamma}\Big(a\Cdot\lambda,\alpha,\beta\gamma;(P_\beta(b)\Cdot\mu,\beta,\gamma; m)-a\Cdot\lambda,\alpha,\beta\gamma; P_{M,\,\beta\gamma}(b\Cdot\mu,\beta,\gamma; m)\Big)\\
	={}&P_\alpha(a)\Cdot\lambda,\alpha,\beta\gamma;\Big(P_\beta(b)\Cdot\mu,\beta,\gamma;m\Big)
-\underline{P_\alpha(a)\cdot_{\lambda;\,\alpha,\,\beta\gamma} P_{M,\,\beta\gamma}(b\cdot_\mu m)}\\
&-P_{M,\,\alpha\beta\gamma}\Big(a\cdot_{\lambda;\,\alpha,\,\beta\gamma} (P_\beta(b)\Cdot\mu,\beta,\gamma; m)-a \Cdot \lambda,\alpha,\beta\gamma; P_{M,\,\beta\gamma}(b \Cdot \mu,\beta,\gamma; m)\Big)\\
&\hspace{5cm}(\text{by Eq.~(\ref{eq:left}) of the underline item})\\
	={}&P_\alpha(a) \Cdot \lambda,\alpha,\beta\gamma;\Big(P_\beta(b) \Cdot \mu,\beta,\gamma; m\Big)-P_{M,\,\alpha\beta\gamma}\Big(P_\alpha(a) \Cdot \lambda,\alpha,\beta\gamma;(b \Cdot \mu,\beta,\gamma; m)\Big)\\
&-P_{M,\,\alpha\beta\gamma}
\Big(a \Cdot \lambda,\alpha,\beta\gamma;(P_\beta(b) \Cdot \mu,\beta,\gamma; m)\Big)-qP_{M,\,\alpha\beta\gamma}\Big(a \Cdot \lambda,\alpha,\beta\gamma;(b \Cdot \mu,\beta,\gamma; m)\Big).
\end{align*}
 On the other hand, we have
 \begin{align*}
	&(a\Star\lambda,\alpha,\beta; b)\Rhd \lambda+\mu,\alpha\beta,\gamma; m\\
={}&P_{\alpha\beta}(a\Star\lambda,\alpha,\beta;  b)\cdot_{\lambda+\mu;\,\alpha\beta,\,\gamma} m-P_{M,\alpha\beta\gamma}\big((a\Star\lambda,\alpha,\beta; b)\cdot_{\lambda+\mu;\,\alpha\beta,\,\gamma} m\big)\\
	={}&(P_\alpha(a)\Cdot\lambda,\alpha,\beta; P_\beta(b))\Cdot\lambda+\mu,\alpha\beta,\gamma; m-P_{M,\alpha\beta\gamma}\Big((a\Cdot\lambda,\alpha,\beta; P_\beta(b))\Cdot\lambda+\mu,\alpha\beta,\gamma; m+(P_\alpha(a)\Cdot\lambda,\alpha,\beta; b)\Cdot\lambda+\mu,\alpha\beta,\gamma; m\\
&+q (a\Cdot\lambda,\alpha,\beta; b)\Cdot\lambda+\mu,\alpha\beta,\gamma; m \Big).
	\end{align*}
Applying Definition~\ref{defn:associative conformal algebra} and comparing the items of both sides, we have
\[	a\Rhd \lambda,\alpha,\beta\gamma;(b\Rhd \mu,\beta,\gamma;m)=(a\Star\lambda,\alpha,\beta;  b)\Rhd \lambda+\mu,\alpha\beta,\gamma; m.\]
Thus the family $(\Rhd\lambda, \alpha,\beta;)_{\alpha,\,\beta\in\Omega}$ makes $M$ into a left module over $(A, (\Star \lambda,\alpha,\beta;)_{\alpha,\,\beta\in\Omega})$. Similarly, one can check that the family $(\Rhd\lambda, \alpha,\beta;)_{\alpha,\,\beta\in\Omega}$ defines a right module structure on $M$ over $(A, (\Star \lambda,\alpha,\beta;)_{\alpha,\,\beta\in\Omega})$.

Now, we are going to check the compatibility of operations $(\Rhd\lambda, \alpha,\beta;)_{\alpha,\,\beta\in\Omega}$ and $(\Lhd\lambda, \alpha,\beta;)_{\alpha,\,\beta\in\Omega}$. On the one hand, we have
\begin{align*}
	&(a\Rhd \lambda,\alpha,\beta; m)\Lhd \lambda+\mu,\alpha\beta,\gamma; b\\
={}& \Big(P_\alpha(a)\Cdot \lambda,\alpha,\beta; m-P_{M,\alpha\beta}(a\Cdot \lambda,\alpha,\beta; m)\Big)\Lhd \lambda+\mu,\alpha\beta,\gamma; b\\
	={}& \Big(P_\alpha(a)\Cdot \lambda,\alpha,\beta; m-P_{M,\alpha\beta}(a\Cdot \lambda,\alpha,\beta; m)\Big)\Cdot \lambda+\mu,\alpha\beta,\gamma;P_\gamma(b)-P_{M,\alpha\beta\gamma}\big((P_\alpha(a)\Cdot \lambda,\alpha,\beta; m)\Cdot \lambda+\mu,\alpha\beta,\gamma; b\\
&-P_{M,\alpha\beta}(a\Cdot \lambda,\alpha,\beta; m)\Cdot \lambda+\mu,\alpha\beta,\gamma; b\big)\\
={}& \Big(P_\alpha(a)\Cdot \lambda,\alpha,\beta; m\Big)\Cdot \lambda+\mu,\alpha\beta,\gamma;P_\gamma(b)-\underline{P_{M,\alpha\beta}(a\Cdot \lambda,\alpha,\beta; m)\Cdot \lambda+\mu,\alpha\beta,\gamma;P_\gamma(b)}\\
&-P_{M,\alpha\beta\gamma}\big((P_\alpha(a)\Cdot \lambda,\alpha,\beta; m)\Cdot \lambda+\mu,\alpha\beta,\gamma; b
-P_{M,\alpha\beta}(a\Cdot \lambda,\alpha,\beta; m)\Cdot \lambda+\mu,\alpha\beta,\gamma; b\big)\\
&\hspace{3cm}(\text{by Eq.~(\ref{eq:right}) of the underline item})\\
={}&(P_\alpha(a)\Cdot \lambda,\alpha,\beta; m)\cdot _{\lambda+\mu;\,\alpha\beta,\,\gamma}P_\gamma(b)-P_{M,\alpha\beta\gamma}\big(P_{M,\alpha\beta}(a\Cdot \lambda,\alpha,\beta; m)\Cdot \lambda+\mu,\alpha\beta,\gamma; b+(a\Cdot \lambda,\alpha,\beta; m)\Cdot \lambda+\mu,\alpha\beta,\gamma; P_\gamma(b)\\
&+q (a\Cdot \lambda,\alpha,\beta; m)\Cdot \lambda+\mu,\alpha\beta,\gamma; b\big)
-P_{M,\alpha\beta\gamma}\big((P_\alpha(a)\Cdot \lambda,\alpha,\beta; m)\Cdot \lambda+\mu,\alpha\beta,\gamma; b\big)\\
&+P_{M,\alpha\beta\gamma}\big(P_{M,\alpha\beta}(a\Cdot \lambda,\alpha,\beta; m)\Cdot \lambda+\mu,\alpha\beta,\gamma;b\big)\\
={}& (P_\alpha(a)\Cdot \lambda,\alpha,\beta; m)\Cdot \lambda+\mu,\alpha\beta,\gamma; P_\gamma(b)-P_{M,\alpha\beta\gamma}\big((a\Cdot \lambda,\alpha,\beta; m)\Cdot \lambda+\mu,\alpha\beta,\gamma; P_\gamma(b)\big)\\
&-P_{M,\alpha\beta\gamma}((P_\alpha(a)\Cdot \lambda,\alpha,\beta; m)\Cdot \lambda+\mu,\alpha\beta,\gamma; b)-q P_{M,\alpha\beta\gamma}((a\Cdot \lambda,\alpha,\beta; m)\Cdot \lambda+\mu,\alpha\beta,\gamma; b).
\end{align*}
On the other hand, we have
\begin{align*}
&a\Rhd\lambda,\alpha,\beta\gamma;(m\Lhd\mu,\beta,\gamma;b)\\
={}&a\Rhd\lambda,\alpha,\beta\gamma;(m\Cdot \mu,\beta,\gamma; P_\gamma(b)-P_{M,\,\beta\gamma}(m\Cdot \mu,\beta,\gamma; b))\\
={}&P_\alpha(a)\Cdot \lambda,\alpha,\beta\gamma;\big(m\Cdot \mu,\beta,\gamma; P_\gamma(b)-P_{M,\,\beta\gamma}(m\Cdot \mu,\beta,\gamma; b)\big)-P_{M,\alpha\beta\gamma}\big(a\Cdot \lambda,\alpha,\beta\gamma;(m\Cdot \mu,\beta,\gamma; P_\gamma(b))\\
&-a\Cdot \lambda,\alpha,\beta\gamma; P_{M,\,\beta\gamma}(m\Cdot \mu,\beta,\gamma; b)\big)\\
={}&P_\alpha(a)\Cdot \lambda,\alpha,\beta\gamma;\big(m\Cdot \mu,\beta,\gamma; P_\gamma(b)\big)-\underline{P_\alpha(a)\Cdot \lambda,\alpha,\beta\gamma;\big(P_{M,\,\beta\gamma}(m\Cdot \mu,\beta,\gamma; b)\big)}-P_{M,\alpha\beta\gamma}\big(a\Cdot \lambda,\alpha,\beta\gamma;(m\Cdot \mu,\beta,\gamma; P_\gamma(b))\\
&-a\Cdot \lambda,\alpha,\beta\gamma; P_{M,\,\beta\gamma}(m\Cdot \mu,\beta,\gamma; b)\big)\quad(\text{by Eq.~(\ref{eq:left}) of the underline item})\\
={}&P_\alpha(a)\Cdot \lambda,\alpha,\beta\gamma; (m\Cdot \mu,\beta,\gamma; P_\gamma(b)-P_{M,\alpha\beta\gamma}\big(a\Cdot \lambda,\alpha,\beta\gamma; P_{M,\,\beta\gamma}(m\Cdot \mu,\beta,\gamma; b)+P_\alpha(a)\Cdot \lambda,\alpha,\beta\gamma;(m\Cdot \mu,\beta,\gamma; b)\\
&+q a\Cdot \lambda,\alpha,\beta\gamma; (m\Cdot \mu,\beta,\gamma; b)\big)
-P_{M,\alpha\beta\gamma}\big(a\Cdot \lambda,\alpha,\beta\gamma;(m\Cdot \mu,\beta,\gamma; P_\gamma(b))\big)+P_{M,\alpha\beta\gamma}\big(a\Cdot \lambda,\alpha,\beta\gamma; P_{M,\,\beta\gamma}(m\Cdot \mu,\beta,\gamma; b)\big)\\
={}& P_\alpha(a)\cdot_{\lambda;\,\alpha,\,\beta\gamma}(m\Cdot \mu,\beta,\gamma; P_\gamma(b))-P_{M,\alpha\beta\gamma}\big(P_\alpha(a)\Cdot \lambda,\alpha,\beta\gamma; (m\Cdot \mu,\beta,\gamma; b)\big)-P_{M,\alpha\beta\gamma}\big(a\Cdot \lambda,\alpha,\beta\gamma; (m\Cdot \mu,\beta,\gamma; P_\gamma(b))\big)\\
&-q P_{M,\alpha\beta\gamma}\big(a\Cdot \lambda,\alpha,\beta\gamma; (m\Cdot \mu,\beta,\gamma; b)\big).
\end{align*}
Thus we have
\[(a\Rhd \lambda,\alpha,\beta; m)\Lhd \lambda+\mu,\alpha\beta,\gamma; b=a\Rhd\lambda,\alpha,\beta\gamma;(m\Lhd\mu,\beta,\gamma;b),\]
that is, operations $(\Rhd\lambda,\alpha,\beta;)_{\alpha,\,\beta\in\Omega}$ and $(\Lhd\lambda,\alpha,\beta;)_{\alpha,\,\beta\in\Omega}$ make $M$ into a bimodule over associative conformal algebra $(A, (\Star\lambda,\alpha,\beta;)_{\alpha,\,\beta\in\Omega})$.
Finally, we show that $ M$ is a Rota-Baxter family $\Omega$-associative conformal bimodule over $A_{\star} $. That is, for any $a\in A$ and $m\in M$,
\begin{align*}
 P_\alpha(a)\Rhd\lambda,\alpha,\beta;  P_{M,\,\beta}(m)&=P_{M,\alpha\beta}\big(a\Rhd\lambda,\alpha,\beta; P_{M,\,\beta}(m)+P_\alpha(a)\Rhd\lambda,\alpha,\beta;  m+q a\Rhd\lambda,\alpha,\beta;  m\big),\\
P_{M,\alpha}(m)\Lhd\lambda,\alpha,\beta;  P_\beta(a)&=P_{M,\alpha\beta}\big(m \Lhd\lambda,\alpha,\beta;  P_\beta(a)+P_{M,\alpha}(m) \Lhd\lambda,\alpha,\beta;  a+q m\Lhd\lambda,\alpha,\beta;  a\big).
\end{align*}
We only prove the first equality, the second being similar.

In fact,
\begin{align*}
&P_\alpha(a)\Rhd\lambda,\alpha,\beta;  P_{M,\,\beta}(m)\\
={}&P_\alpha^2(a)\Cdot \lambda,\alpha,\beta; P_{M,\,\beta}(m)-P_{M,\alpha\beta}(P_\alpha(a)\Cdot \lambda,\alpha,\beta; P_{M,\,\beta}(m))\\
={}&  P_{M,\alpha\beta} (P_\alpha(a)\Cdot \lambda,\alpha,\beta; P_{M,\,\beta}(m) + P_\alpha^2(a)\Cdot \lambda,\alpha,\beta; m +q  P_\alpha(a)\Cdot \lambda,\alpha,\beta; m )-P_{M,\alpha\beta} \big(P_\alpha(a)\Cdot \lambda,\alpha,\beta; P_{M,\,\beta}(m)\big)  \\
={}& P_{M,\alpha\beta}\big(P_\alpha^2(a)\Cdot \lambda,\alpha,\beta; m +q  P_\alpha(a)\Cdot \lambda,\alpha,\beta; m \big).
\end{align*}
Also, we have
\begin{align*}
 & P_{M,\alpha\beta}\left(a\Rhd\lambda,\alpha,\beta;  P_{M,\,\beta}(m)+P_\alpha(a)\Rhd\lambda,\alpha,\beta;  m+q a\Rhd\lambda,\alpha,\beta;  m\right),\\
={}&P_{M,\alpha\beta}\bigg( P_\alpha(a)\Cdot \lambda,\alpha,\beta; P_{M,\,\beta}(m)-P_{M,\alpha\beta}(a\Cdot \lambda,\alpha,\beta; P_{M,\,\beta}(m))+ P_\alpha^2(a)\Cdot \lambda,\alpha,\beta; m-P_{M,\alpha\beta}(P_\alpha(a)\Cdot \lambda,\alpha,\beta; m)\\
&+q  P_\alpha(a)\Cdot \lambda,\alpha,\beta; m-q P_{M,\alpha\beta}(a\Cdot \lambda,\alpha,\beta; m)\bigg)\\
={}&P_{M,\alpha\beta}\Big(P_\alpha(a)\Cdot \lambda,\alpha,\beta; P_{M,\,\beta}(m)-P_{M,\alpha\beta}(a\Cdot \lambda,\alpha,\beta; P_{M,\,\beta}(m)
+P_\alpha(a)\Cdot \lambda,\alpha,\beta; m+q a\Cdot \lambda,\alpha,\beta; m)\\
&+P^2_\alpha(a)\Cdot \lambda,\alpha,\beta; m+q P_\alpha(a)\Cdot \lambda,\alpha,\beta; m\Big)\\
={}& P_{M,\alpha\beta}\big(   P_\alpha^2(a)\Cdot \lambda,\alpha,\beta; m +q  P_\alpha(a)\Cdot \lambda,\alpha,\beta; m \big)\\
={}& P_\alpha(a)\Rhd\lambda,\alpha,\beta;  P_{M,\,\beta}(m).\tag*{\qedhere}
\end{align*}
\end{proof}

\section{Cohomology theory of Rota-Baxter family $\Omega$-associative conformal algebras} \label{Sect: Cohomology theory of Rota-Baxter algebras}
In this  section, we will define a cohomology theory for Rota-Baxter family $\Omega$-associative conformal algebras of any weight.
\subsection{Hochschild cohomology of $\Omega$-associative conformal algebras}\
For the rest of the present subsection, we assume that $\Omega$ is a semigroup with unit $1 \in \Omega$. The unital condition of $\Omega$ is only useful in the coboundary operator of the cohomology at the degree $0$ level.

Let $(A , (\Mu\lambda,\alpha,\beta;)_{\alpha,\, \beta \in \Omega})$ be an $\Omega$-associative conformal algebra and $M$ be a bimodule over it. Now we describe the Hochschild cohomology complex $C^\bullet( A ,M)$ for an $\Omega$-associative
conformal algebra $ A $ with coefficients in a conformal $ A $-bimodule $M$ by means of $\lambda $-products~\cite{D,KK}. For each $n \geq 0$, we define an abelian group $C^n (A,M)$ consists of all multilinear maps of the form
\begin{align*}
f_{\lambda_1,\dots,\lambda_{n-1};\alpha_1,\dots,\alpha_n}:\, A ^{\otimes n}&\longrightarrow M\\
a_1\otimes\dots\otimes a_n&\longmapsto f_{\lambda_1,\dots,\lambda_{n-1};\alpha_1,\dots,\alpha_n}(a_1,\dots,a_n)
\end{align*}
satisfying the following sesquilinearity conditions:
\begin{align*}
f_{\lambda_1,\dots,\lambda_{n-1};\alpha_1,\dots,\alpha_n}(a_1,\dots,\partial a_i,\dots,a_n)&=-\la_if_{\lambda_1,\dots,\lambda_{n-1};\alpha_1,\dots,\alpha_n}(a_1,\dots,a_n), ~~i=1,\dots,n-1,\\
f_{\lambda_1,\dots,\lambda_{n-1};\alpha_1,\dots,\alpha_n}(a_1,\dots,\partial a_n)&=(\partial+\la_1+\dots+\la_{n-1})f_{\lambda_1,\dots,
\lambda_{n-1};\alpha_1,\dots,\alpha_n}(a_1,\dots,a_n).
\end{align*}
The $\Omega$-associative conformal Hochschild differential $\delta^n:C^n( A , M)\rightarrow C^{n+1}( A , M)$ is defined by
\begin{equation*}
  (\delta^0 m)_\alpha(a):=a\cdot_{0;\,\alpha,\,1}
  -m\cdot_{-\partial;\,1,\,\alpha}a,\,\text{ for }\, n=0\,\text{ and }\,m\in M=C^0( A , M).
\end{equation*}
For $n\geq 1$, define
\begin{align}
&(\delta^n f )_{\lambda_1,\dots,\lambda_{n};\,\alpha_1,\dots,\alpha_{n+1}}(a_1,\dots,a_{n+1})\nonumber\\
={}&a_{1}
\cdot_{{\lambda_1;\,\alpha_1,\,\alpha_2\dots\alpha_{n+1}}} f _{\lambda_2,\dots,\lambda_n;\,\alpha_2,\dots,\alpha_{n+1}}(a_2,\dots,a_{n+1})
\nonumber\\
&+\sum_{i=1}^{n}(-1)^i f _{\lambda_1,\dots,\lambda_i+\lambda_{i+1},\dots,
\lambda_{n};\,\alpha_1,\dots,\alpha_i\alpha_{i+1},\dots,\alpha_{n+1}}
(a_1,\dots,a_i\,\cdot_{\lambda_i;\,\,\alpha_i,\,\,
\alpha_{i+1}}\,a_{i+1},\dots,a_{n+1})\nonumber\\
&+(-1)^{n+1}f_{\lambda_1,\dots,\lambda_{n-1};\,\alpha_1,\dots,\alpha_n}
(a_1,\dots,a_n)
\cdot_{\la_1+\dots+\la_{n};\,\alpha_1\dots\alpha_n,\,\,\alpha_{n+1}}a_{n+1}.\label{co3}
\end{align}
for $f\in C^n(A,M).$ One can show that $\delta^n\circ\delta^{n-1}=0$~\cite{Das2,Lamei}.
So $(C^n(A,M),\delta^n)$ is a cochain complex and the corresponding cohomology groups are called the cohomology of the $\Omega$-associative conformal algebra $A$ with coefficient in the bimodule $M$.

\begin{defn}
An $n$-cochain $ f_{\lambda_1,\dots,\lambda_{n-1};\,\alpha_1,\dots,\alpha_n} \in C^n( A , M)$ is called an {\bf $n$-cocycle} if
\[(\delta^n f)_{\lambda_1,\dots,\lambda_n;\,\alpha_1,\dots,\alpha_{n+1}} =0\]
 and an element of the form $(\delta^{n-1} f)_{\lambda_1,\dots,\lambda_{n-1};\,\alpha_1,\dots,\alpha_n} $, where $ f_{\lambda_1,\dots,\lambda_{n-2};\,\alpha_1,\dots,\alpha_{n-1}} \in C^{n-1}( A , M)$, is called an {\bf $n$-coboundary}.
Denote by $Z^n( A , M)$ and $B^n( A , M)$ the subspaces of $n$-cocycles and $n$-coboundaries,
respectively. Then the quotient space $$H^n( A , M) = Z^n( A , M)/B^n
( A , M)$$ is called the {\bf $n$th Hochschild cohomology group} of $ A $ with coefficients in $M$.
\end{defn}

For example, the space of $1$-cocycle $Z^1( A , M) = \Ker \,{\delta^1} \subseteq  C^1( A , M)$ consists of all
$\mathbb{F}[\partial]$-linear maps $ (f_{\alpha})_{\alpha\in\Omega}:  A  \rightarrow M$ such that
\begin{align}\label{1-cocycle}
0=(\delta^1 f )_{\lambda;\,\alpha,\,\beta}(a,b)=a \cdot_{\lambda;\,\alpha,\,\beta} f_\beta  (b)- f_{\alpha\beta} (a\cdot_{\lambda;\,\alpha,\,\beta} b)+ f_\alpha (a)\cdot_{\lambda;\,\alpha,\,\beta}b,
\end{align}
and the space of $2$-cocycles $Z^2( A , M) = \Ker \,{\delta^2} \subseteq  C^2( A , M)$ consists of all conformal
sesquilinear maps $ (f_{\lambda;\,\alpha,\,\beta})_{\alpha,\,\beta\in\Omega}:  A \otimes A  \rightarrow M[\la]$ such that
\begin{align}\label{2-cocycle}
0&=(\delta^2 f )_{\lambda,\,\mu;\,\alpha,\,\beta,\,\gamma}(a,b,c)\nonumber\\
&=a\cdot_{\lambda;\,\alpha,\,\beta\gamma}  f _{\mu;\,\beta,\,\gamma} (b,c)- f _{\lambda+\mu;\,\,\alpha\beta,\,\gamma}(a\cdot_{\lambda;\,\alpha,\,\beta} b,c)+ f _{\lambda;\,\alpha,\,\beta\gamma}(a, b\cdot_{\mu;\,\beta,\,\gamma} c)- f _{\lambda;\,\alpha,\,\beta}(a,b)\cdot_{\lambda+\mu;\,\alpha\beta,\,\gamma}c.
\end{align}

\begin{remark}
From Eq.~(\ref{1-cocycle}), we have
\[ f_{\alpha\beta} (a\cdot_{\lambda;\,\alpha,\,\beta} b)=f_\alpha (a)\cdot_{\lambda;\,\alpha,\,\beta}b +a \cdot_{\lambda;\,\alpha,\,\beta} f_\beta  (b).\]
By Eq.~~(\ref{eq:differential family conformal algebras}), which shows that $(f_\alpha)_{\alpha\in\Omega}$ is a family derivation of weight $0$. That is to say the derivations of weight $0$ on an $\Omega$-associative conformal algebra are exactly in $C^1(A,M).$
\end{remark}

\subsection{Cohomology of Rota-Baxter family operators}\
\label{Subsect: cohomology RB operator}
Recall that Proposition~\ref{Prop: new RBF algebra} and Proposition~\ref{Prop:new-bimodule}, we  give a new
$\Omega$-associative conformal algebra  $A_{\star} $ and
  a new   Rota-Baxter family $\Omega$-associative conformal bimodule  $M$  by the left action $(\lhd_{\alpha,\,\beta})_{\alpha,\,\beta\in\Omega}$ and the right action $(\rhd_{\alpha,\,\beta})_{\alpha,\,\beta\in\Omega}$ over $A_{\star} $.
 Consider the Hochschild cochain complex of $A_{\star} $ with
 coefficients in $ M$:
 $$C^\bullet_{\Alg}(A_{\star} , M)=\bigoplus\limits_{n=0}^\infty C^n_{\Alg}(A_{\star} , M).$$

More precisely, for $n\geqslant 0$, denote by \[C^n_{\Alg}(A_{\star}, {M}):=\Hom  (A^{\ot n},M)\,\text{ and }\, C^0_{\Alg} (A_{\star} , M) = M,\]
and for $\alpha\in\Omega$, define
\begin{align*}
\partial^0(m)_\alpha(a):=P_\alpha(a)\cdot_{0;\,\alpha,\,1}m
-m\cdot_{-\partial;\,1,\,\alpha}P_\alpha(a)
-P_{M,\,\alpha}(a\cdot_{0;\,\alpha,\,1}m-m\cdot_{-\partial;\,1,\,\alpha}a).
\end{align*}
For $\alpha_1,\,\dots,\alpha_n\in\Omega$, denote by
\begin{align*}
&C^{n \geq 1}_{\Alg} (A_{\star} , M) = \big\{  f = \{ f_{\lambda_1,\dots,\lambda_{n-1};\alpha_1, \dots, \alpha_n} \}_{\alpha_1, \dots, \alpha_n \in \Omega} ~|~ f_{\lambda_1,\dots,\lambda_{n-1};\alpha_1, \dots, \alpha_n} : A^{\otimes n} \rightarrow M \text{ is multilinear} \big\},
\end{align*}
 and its differential
 \[\partial^n : C^n_{\Alg} (A_{\star} , M) \rightarrow C^{n+1}_{\Alg}(A_{\star} , M)\]
 is defined by
\begin{align*}
&\big( \partial^n (f) \big)_{\lambda_1,\dots,\lambda_{n};\,\alpha_1, \dots, \alpha_{n+1}} (u_1, \dots, u_{n+1}) \nonumber\\
={}& P_{\alpha_1} (u_1) \Cdot\lambda_1,\alpha_1,\alpha_2\dots\alpha_{n+1}; f_{\lambda_2,\dots,\lambda_{n};\,\alpha_2, \dots, \alpha_{n+1}} (u_2, \dots, u_{n+1})\\
&- P_{M,\alpha_1 \dots \alpha_{n+1}} \big( u_1 \Cdot\lambda_1,\alpha_1,\alpha_2\dots\alpha_{n+1}; f_{\lambda_2,\dots,\lambda_{n};\,\alpha_2, \dots, \alpha_{n+1}} (u_2, \dots, u_{n+1})\nonumber\\
&+ \sum_{i=1}^n (-1)^{i} f_{\lambda_1,\dots,\lambda_i+\lambda_{i+1},\dots\lambda_{n};\,\alpha_1, \dots, \alpha_i \alpha_{i+1}, \dots, \alpha_{n+1}} \Big( u_1, \dots, \Big(P_{\alpha_i} (u_i) \Cdot\lambda_i,\alpha_i,\alpha_{i+1}; u_{i+1} + u_i \Cdot\lambda_i,\alpha_i,\alpha_{i+1}; P_{\alpha_{i+1}} (u_{i+1})\\
&+q u_i\Cdot\lambda_i,\alpha_i,\alpha_{i+1}; u_{i+1}\Big),\dots,u_{n+1}\Big) + (-1)^{n+1}\Big(f_{\lambda_1,\dots,\lambda_{n-1};\,\alpha_1, \dots, \alpha_n} (u_1, \dots, u_n) \Cdot\lambda_1+\dots+\lambda_n,\alpha_1\dots\alpha_n,\alpha_{n+1}; P_{\alpha_{n+1}} (u_{n+1}) \Big)\\
&-(-1)^{n+1} P_{M,\alpha_1 \dots \alpha_{n+1}} \big(  f_{\lambda_1,\dots,\lambda_{n-1};\,\alpha_1, \dots, \alpha_n} (u_1, \dots, u_n) \Cdot\lambda_1+\dots+\lambda_n,\alpha_1\dots\alpha_n,\alpha_{n+1};  u_{n+1} \big)\Big).\nonumber
\end{align*}

\begin{defn}
 	Let $A=(A, (\Mu\lambda,\alpha,\beta;)_{\alpha,\,\beta\in\Omega}
 ,(P_\omega)_{\omega\in\Omega})$ be a Rota-Baxter family $\Omega$-associative conformal algebra of weight $q$ and $M=(M,(P_{M,\,\omega})_{\omega\in\Omega})$ be a Rota-Baxter family $\Omega$-associative conformal bimodule over it. Then the cochain complex $(C^\bullet_\Alg(A_{\star}, M),\partial^n)$ is called {\bf the cochain complex of Rota-Baxter family operator} $(P_\omega)_{\omega\in\Omega}$ with coefficients in $(M, (P_{M,\,\omega})_{\omega\in\Omega})$,  denoted by $C_{\RBO}^\bullet(A, M)$. The cohomology of $C_{\RBO}^\bullet(A, $ $M)$, denoted by $\mathrm{H}_{\RBO}^\bullet(A,M)$, are called {\bf the cohomology of Rota-Baxter family  operator} $(P_\omega)_{\omega\in\Omega}$ with coefficients in $(M, (P_{M,\,\omega})_{\omega\in\Omega})$.
 \end{defn}

\begin{remark}
When $M=A$, we call $(M, (P_{M,\,\omega})_{\omega\in\Omega})$ the regular Rota-Baxter family $\Omega$-associative conformal bimodule $(A,(P_\omega)_{\omega\in\Omega})$.
\end{remark}

\subsection{Cohomology of Rota-Baxter family $\Omega$-associative conformal algebras}\
\label{Subsec:chomology RB}
In this subsection, we will combine the Hochschild cohomology of $\Omega$-associative conformal algebras and the cohomology of Rota-Baxter family operators to define a cohomology theory for Rota-Baxter family $\Omega$-associative conformal algebras.

Let $M=(M,(P_{M,\,\omega})_{\omega\in\Omega})$ be a Rota-Baxter family $\Omega$-associative conformal bimodule of weight $q$. Now, let's construct a chain map   $$\Phi^\bullet: C ^\bullet_{\Alg}(A,M) \rightarrow C_{\RBO}^\bullet(A,M),$$ i.e., the following commutative diagram:
\[\xymatrix{
		 C ^0_{\Alg}(A,M)\ar[r]^-{\delta^0}\ar[d]^-{\Phi^0}& C ^1_{\Alg}(A,M)\ar@{.}[r]\ar[d]^-{\Phi^1}& C ^n_{\Alg}(A,M)\ar[r]^-{\delta^n}\ar[d]^-{\Phi^n}& C ^{n+1}_{\Alg}(A,M)\ar[d]^{\Phi^{n+1}}\ar@{.}[r]&\\
		 C ^0_{\RBO}(A,M)\ar[r]^-{\partial^0}& C ^1_{\RBO}(A,M)\ar@{.}[r]& C ^n_{\RBO}(A,M)\ar[r]^-{\partial^n}& C ^{n+1}_{\RBO}(A,M)\ar@{.}[r]&
.}\]

Define $\Phi^0=\Id_{\Hom(k,M)}=\Id_M$. For $n=1$ and $f=(f_\alpha)_{\alpha\in\Omega}\in C^1_{\Alg}(A,M)$, define
\begin{equation*}
\Phi^1(f)_\alpha(a):=f_\alpha(P_\alpha(a))-P_{M,\,\alpha}(f_\alpha(a)),\,\text{ for }\,\alpha\in\Omega.
\end{equation*}
For  $n\geq 2$ and $f=(f_{\lambda_1,\dots,\lambda_{n-1};\alpha_1, \dots, \alpha_n})_{\alpha_1,\ldots,\alpha_n\in\Omega}\in  C^n_{\Alg}(A,M)$,
 define
\begin{align*}
 &\Phi^n(f)_{\lambda_1,\dots,\lambda_{n-1};\,\alpha_1, \dots, \alpha_n}(u_1,\dots,  u_n) \\
={}&f_{\lambda_1,\dots,\lambda_{n-1};\,\alpha_1, \dots, \alpha_n}(P_{\alpha_1}(u_1), \dots, P_{\alpha_n}(u_n))\\
&-\sum_{k=0}^{n-1}q^{n-k-1}\sum_{1\leqslant i_1<i_2<\dots<i_k\leqslant n}P_{M,\,{\alpha_1 \dots \alpha_n}}\circ f_{\lambda_1,\dots,\lambda_{n-1};\,\alpha_1, \dots, \alpha_n}\\
&\hspace{6cm}(u_{1, i_1-1},\, P_{\alpha_{i_1}}(u_{i_1}), u_{i_1+1, i_2-1}, P_{\alpha_{i_2}}(u_{i_2}), \dots,P_{\alpha_{i_k}}(u_{i_k}), u_{i_k+1, n}).
\end{align*}


\begin{prop}\label{Prop: Chain map Phi}
	The map $\Phi^\bullet: C^\bullet_\Alg(A,M)\rightarrow C^\bullet_{\RBO}(A,M)$ is a chain map.
\end{prop}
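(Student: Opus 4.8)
The assertion is that every square in the displayed diagram commutes, that is, $\partial^{n}\circ\Phi^{n}=\Phi^{n+1}\circ\delta^{n}$ as maps $C^{n}_{\Alg}(A,M)\to C^{n+1}_{\RBO}(A,M)$ for all $n\geq 0$. The plan is to verify this identity degree by degree, settling the degenerate degrees $n=0$ and $n=1$ by direct substitution and then treating a generic degree $n\geq 2$ by expanding both composites and comparing them. For $n=0$ one takes $m\in M=C^{0}_{\Alg}(A,M)$ and compares $\Phi^{1}(\delta^{0}m)_{\alpha}(a)=(\delta^{0}m)_{\alpha}(P_{\alpha}(a))-P_{M,\alpha}\big((\delta^{0}m)_{\alpha}(a)\big)$ with $\partial^{0}(m)_{\alpha}(a)$, where the unit $1\in\Omega$ is exactly what aligns the semigroup indices. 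The degree $n=1$ already displays the mechanism in miniature: expanding $\Phi^{2}(\delta^{1}f)$ produces the product $P_{\alpha}(a)\cdot_{\lambda;\,\alpha,\,\beta}P_{\beta}(b)$, and rewriting it via the defining Rota-Baxter family relation together with the bimodule relations \eqref{eq:left} and \eqref{eq:right} reproduces $\partial^{1}(\Phi^{1}f)$.

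For a generic degree, the key manipulation is on the leading summand of $\Phi^{n+1}(\delta^{n}f)$, namely $\delta^{n}f$ evaluated on $(P_{\alpha_{1}}(u_{1}),\dots,P_{\alpha_{n+1}}(u_{n+1}))$, expanded by the Hochschild differential \eqref{co3}. Each interior summand then contains an adjacent product $P_{\alpha_{i}}(u_{i})\cdot_{\lambda_{i};\,\alpha_{i},\,\alpha_{i+1}}P_{\alpha_{i+1}}(u_{i+1})$, which the Rota-Baxter family relation rewrites as $P_{\alpha_{i}\alpha_{i+1}}$ applied to $P_{\alpha_{i}}(u_{i})\cdot_{\lambda_{i}}u_{i+1}+u_{i}\cdot_{\lambda_{i}}P_{\alpha_{i+1}}(u_{i+1})+q\,u_{i}\cdot_{\lambda_{i}}u_{i+1}$; the first and last summands are handled symmetrically through the $A_{\star}$-actions \eqref{eq:right module} and \eqref{eq:left module} and the module relations \eqref{eq:left}, \eqref{eq:right}. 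After these substitutions, and after using the multilinearity of $f$ and the bimodule associativity axioms of Definition~\ref{defn:bimodule} to reassociate, every term on both sides takes the common shape of $f$ (possibly followed by an outer $P_{M}$) evaluated on a tuple of the $u_{j}$'s to a prescribed subset of which the operator $P$ has been applied. The problem thereby reduces to matching scalar coefficients of these ``decorated'' monomials.

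This coefficient matching is the main obstacle. A single decorated monomial can be produced in several ways: a position may be decorated by the $P$ built into $\Phi$, or by the $P_{\alpha_{i}\alpha_{i+1}}$ emitted when the Rota-Baxter relation acts on an adjacent pair, and the outer $P_{M}$ may or may not be present according to whether one sits inside the $P_{M}$-bracket of $\partial^{n}$ or inside a correction term of $\Phi^{n}$ or $\Phi^{n+1}$. I would organize the comparison as an inclusion--exclusion over the set of decorated positions: fix that set, and sum the signed, $q$-weighted contributions coming from each composite; the powers of $q$ then telescope to the single coefficient prescribed in the definitions of $\Phi^{n+1}$ and $\partial^{n}$. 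It is here that the weight factors $q^{\,n-k-1}$ and the proper-subset truncation $k\leq n-1$ in the definition of $\Phi^{n}$ are seen to be exactly what is needed for the cancellation to close. Once the coefficients are shown to agree, the equality $\partial^{n}\Phi^{n}=\Phi^{n+1}\delta^{n}$ follows and $\Phi^{\bullet}$ is a chain map.
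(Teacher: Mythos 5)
Your proposal follows essentially the same route as the paper's Appendix~A proof: both verify the degree-$0$ square directly and then, for general $n$, expand $\Phi^{n+1}\circ\delta^{n}$ and $\partial^{n}\circ\Phi^{n}$ in full, rewrite each adjacent product $P_{\alpha_i}(a_i)\cdot_{\lambda_i;\,\alpha_i,\,\alpha_{i+1}}P_{\alpha_{i+1}}(a_{i+1})$ via the Rota--Baxter family relation together with Eqs.~\eqref{eq:left}--\eqref{eq:right}, and match the resulting $q$-weighted ``decorated'' terms, which is exactly the bookkeeping the paper carries out by its explicit labelling of term groups. Your inclusion--exclusion organization of that matching is only a presentational variant, so the argument is correct and not materially different.
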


\begin{proof}
We leave the long proof of this result to Appendix~\ref{Appendix}.
\end{proof}

\begin{defn}
 Let $M=(M,(P_{M,\,\omega})_{\omega\in\Omega})$ be a  Rota-Baxter family $\Omega$-associative conformal bimodule of weight $q$.  We define the cochain complex $(C^\bullet_{\RBA}(A,M), d^\bullet)$  of Rota-Baxter family $\Omega$-associative conformal algebra $(A, (\mu_{\lambda;\,\alpha,\,\beta})_{\alpha,\,\beta\in\Omega},(P_{\omega})_{\omega\in\Omega})$ with coefficients in $(M,(P_{M,\,\omega})_{\omega\in\Omega})$, that is,   let
\[ C ^0_{\RBA}(A,M)= C ^0_\Alg(A,M)  \quad  \mathrm{and}\quad    C ^n_{\RBA}(A,M)= C ^n_\Alg(A,M)\oplus C ^{n-1}_{\RBO}(A,M),\,\text{ for }\, n\geqslant 1,\]
 and the differential $d^n:  C ^n_{\RBA}(A,M)\rightarrow  C ^{n+1}_{\RBA}(A,M)$ is given by
\begin{align}
&d^n(f, g)_{\lambda_1,\dots,\lambda_n,\,\mu_1,\dots,\,\mu_m;\,\alpha_1,\dots,\alpha_{n+1},\,\beta_1,\ldots,\,\beta_n}
(a_1,\dots,a_{n+1})\nonumber\\
={}&\Big(\delta^n(f)_{\lambda_1,\dots,\lambda_{n};\,\alpha_1, \ldots, \alpha_{n+1}},\,
 -\partial^{n-1}(g)_{\mu_1,\dots,\,\mu_{m};\,\beta_1, \ldots,\, \beta_{n}} -\Phi^n(f)_{\mu_1,\dots,\
 \mu_{m};\,\beta_1,\, \ldots, \,\beta_n}\Big)\label{eq:diff}
 \end{align}
 for any $f\in  C ^n_\Alg(A,M)$ and $g\in C ^{n-1}_{\RBO}(A,M)$.

The  cohomology of $( C ^\bullet_{\RBA}(A,M), d^\bullet)$, denoted by $\rmH_{\RBA}^\bullet(A,M)$,  is called {\bf the cohomology of the Rota-Baxter family $\Omega$-associative conformal algebra} $(A, (\mu_{\lambda;\,\alpha,\,\beta})_{\alpha,\,\beta\in\Omega},(P_\omega)_{\omega\in\Omega})$ with coefficients in $(M,(P_{M,\,\omega})_{\omega\in\Omega})$.
\end{defn}

\begin{remark}
When $M=A$, we just denote $ C ^\bullet_{\RBA}(A,A)$, $\rmH^\bullet_{\RBA}(A,A)$   by $ C ^\bullet_{\RBA}(A),  $  $\rmH_{\RBA}^\bullet(A)$ respectively, and call  them the cochain complex, the cohomology of Rota-Baxter family $\Omega$-associative conformal algebra  $(A, (\mu_{\lambda;\,\alpha,\,\beta})_{\alpha,\,\beta\in\Omega},(P_{\omega})_{\omega\in\Omega})$ respectively.
\end{remark}

\section{Formal deformations of Rota-Baxter family on $\Omega$-associative conformal algebras and cohomological interpretation}
\label{sec:formal deformations}

In this section, we will study the formal deformations of Rota-Baxter family $\Omega$-associative conformal algebras and interpret them  via lower degree cohomology groups of Rota-Baxter family $\Omega$-associative conformal algebras defined in last section.

\subsection{Formal deformations of $\Omega$-associative conformal algebras}\
Let $ A $ be an $\Omega$-associative conformal algebra, and $(\omega_{\lambda;\,\alpha,\,\beta})_{\alpha,\,\beta\in\Omega}:  A  \times  A  \rightarrow A [\lambda]$ is a family of conformal bilinear maps. For all $a,b\in A$ and $\alpha,\beta\in\Omega$, we consider a family of $t$-parameterized bilinear $\lambda$-multiplications
\begin{equation*}
a \circ_{\lambda;\,\alpha,\,\beta}^{t} b=a\cdot_{\lambda;\,\alpha,\,\beta} b+t \omega_{\lambda;\,\alpha,\,\beta}(a, b).
\end{equation*}

\begin{defn}\label{defn:deformation}
If all the $\lambda$-multiplications $(\circ_{\lambda;\,\alpha,\,\beta}^{t})_{\alpha,\,\beta\in\Omega}$ endow $A$ with $\Omega$-associative conformal algebra structures, then we call that $(\omega_{\lambda;\,\alpha,\,\beta})_{\alpha,\,\beta\in\Omega}$ is {\bf a family deformation} of the $\Omega$-associative conformal algebra $A$.
\end{defn}

If $(A, (\circ_{\lambda;\,\alpha,\,\beta}^t)_{\alpha,\,\beta\in\Omega})$ is an $\Omega$-associative conformal algebra, i.e.,
\[(a\circ_{\lambda;\,\alpha,\,\beta}^t b)\circ_{\lambda+\mu;\,\alpha\beta,\,\gamma}^t c
=a\circ_{\lambda;\,\alpha,\,\beta\gamma}^t(b\circ_{\mu;\,\beta,\,\gamma}^tc).\]
Then we have
\begin{align*}
&(a\cdot_{\lambda;\,\alpha,\,\beta}b)\cdot_{\lambda;\,\alpha\beta,\,\gamma}c
+t\omega_{\lambda;\,\alpha,\,\beta}(a,b)\cdot_{\lambda+\mu;\,\alpha\beta,\,\gamma}c
+t\omega_{\lambda+\mu;\,\alpha\beta,\,\gamma}(a\cdot_{\lambda;\,\alpha,\,\beta}b,c)
+t^2\omega_{\lambda+\mu;\,\alpha\beta;\,\gamma}(\omega_{\lambda;\,\alpha,\,\beta}(a,b),c)\\
={}&a\cdot_{\lambda;\,\alpha,\,\beta\gamma}(b\cdot_{\mu;\,\beta,\,\gamma}c)
+ta\cdot_{\lambda;\,\alpha,\,\beta\gamma}(\omega_{\mu;\,\beta,\,\gamma}(b,c))
+t\omega_{\lambda;\,\alpha,\,\beta\gamma}(a,b\cdot_{\mu;\,\beta,\,\gamma}c)
+t^2\omega_{\lambda;\,\alpha,\,\beta\gamma}(a,\omega_{\mu;\,\beta,\,\gamma}(b,c)).
\end{align*}
Equivalently,
\begin{align}
\label{eq:condition1} \omega_{\lambda+\mu;\,\alpha\beta,\,\gamma}(a\cdot_{\lambda;\,\alpha,\,\beta}b,c)
+\omega_{\lambda;\,\alpha,\,\beta}(a,b)\cdot_{\lambda+\mu;\,\alpha\beta,\,\gamma}c
&=\omega_{\lambda;\,\alpha,\,\beta\gamma}(a,b\cdot_{\mu;\,\beta,\,\gamma}c)
+a\cdot_{\lambda;\,\alpha,\,\beta\gamma}(\omega_{\mu;\,\beta,\,\gamma}(b,c)),\\
\label{eq:condition2} \omega_{\lambda+\mu;\,\alpha\beta;\,\gamma}(\omega_{\lambda;\,\alpha,\,\beta}(a,b),c)
&=\omega_{\lambda;\,\alpha,\,\beta\gamma}(a,\omega_{\mu;\,\beta,\,\gamma}(b,c)).
\end{align}
Hence, by Eq.~(\ref{eq:condition2}), we know that $(\omega_{\lambda;\,\alpha,\,\beta})_{\alpha,\,\beta\in\Omega}$ is an $\Omega$-associative conformal algebra structure, moreover satisfying condition Eq.~(\ref{eq:condition1}).

Recall that the definition of the $\Omega$-associative conformal Hochschild differential defined by Eq.~(\ref{co3}) and Eq.~(\ref{2-cocycle}), we have $(\delta^2 \omega )_{\lambda,\,\mu;\,\alpha,\,\beta,\,\gamma}=0$. Namely, the $\Omega$-associative conformal bilinear map $(\omega_{\lambda;\,\alpha,\,\beta})_{\alpha,\,\beta\in\Omega}$ is a 2-cocyle in $C^{2}(A, A)$.

\begin{defn}
A family of deformation $(\omega_{\lambda;\,\alpha,\,\beta})_{\alpha,\,\beta\in\Omega}$ is said to be {\bf trivial} if there exists a family of $\mathbb{F}[\partial]$-linear maps $(N_\omega)_{\omega\in\Omega}: A \rightarrow A$ such that for $T_{t,\,\omega}=\mathrm{id}+t N_\omega$ there holds
$$
T_{t,\,\alpha\beta}\left(a \circ_{\lambda;\,\alpha,\,\beta}^{t} b\right)=T_{t,\,\alpha}(a)\cdot_{\lambda;\,\alpha,\,\beta} T_{t,\,\beta}(b), \text { for all } a, b \in A,\alpha,\beta\in\Omega.
$$
\end{defn}
Expanding the above equation of both sides, we have
$$
\begin{aligned}
T_{t,\,\alpha\beta}\big(a \circ_{\lambda;\,\alpha,\,\beta}^{t} b\big) &=a\cdot_{\lambda;\,\alpha,\,\beta} b+t\big(\omega_{\lambda;\,\alpha,\,\beta}(a, b)+N_{\alpha\beta}\big(a\cdot_{\lambda;\,\alpha,\,\beta} b\big)\big)+t^{2} N_{\alpha\beta} (\omega_{\lambda;\,\alpha,\,\beta}(a, b)), \\
T_{t,\,\alpha}(a)\cdot_{\lambda;\,\alpha,\,\beta} T_{t,\,\beta}(b) &=a\cdot_{\lambda;\,\alpha,\,\beta} b+t\big(N_\alpha(a)\cdot_{\lambda;\,\alpha,\,\beta} b+a\cdot_{\lambda;\,\alpha,\,\beta} N_\beta(b)\big)+t^{2} N_\alpha(a)\cdot_{\lambda;\,\alpha,\,\beta} N_\beta(b).
\end{aligned}
$$
The triviality of deformation is equivalent to the following equations:
\begin{align}
\label{eq:deformation1}\omega_{\lambda;\,\alpha,\,\beta}(a, b) &=N_\alpha(a)\cdot_{\lambda;\,\alpha,\,\beta} b+a\cdot_{\lambda;\,\alpha,\,\beta} N_\beta(b)-N_{\alpha\beta}\big(a\cdot_{\lambda;\,\alpha,\,\beta} b\big), \\
\label{eq:deformation2}N_{\alpha\beta} (\omega_{\lambda;\,\alpha,\,\beta}(a, b)) &=N_\alpha(a)\cdot_{\lambda;\,\alpha,\,\beta} N_\beta(b).
\end{align}

It follows from Eq.~(\ref{eq:deformation1}) and Eq.~(\ref{eq:deformation2}) that $(N_\omega)_{\omega\in\Omega}$ must satisfy the following condition:
\begin{equation}\label{eq:nijenhuis family algebras}
N_\alpha(a)\cdot_{\lambda;\,\alpha,\,\beta} N_\beta(b)=N_{\alpha\beta}\big(N_\alpha(a)\cdot_{\lambda;\,\alpha,\,\beta} b+a\cdot_{\lambda;\,\alpha,\,\beta} N_\beta(b)-N_{\alpha\beta}\big(a\cdot_{\lambda;\,\alpha,\,\beta} b\big)\big)
\end{equation}
Thus, Eq.~(\ref{eq:nijenhuis family algebras}) is a generalization of the following Nijenhuis equation~\cite{Lamei}
\[N(a)_{\lambda} N(b)=N\left(N(a)_{\lambda} b+a_{\lambda} N(b)-N\left(a_{\lambda} b\right)\right),\]
and also shows that any trivial deformation produces a Nijenhuis family operator.

\begin{prop}\label{prop:associativity}
 Let $(N_\omega)_{\omega\in\Omega}$ be a family of Nijenhuis operators over an $\Omega$-associative conformal algebra $ A $. Define
$$
a \circ_{\lambda;\,\alpha,\,\beta}^{N} b:=N_\alpha(a)\cdot_{\lambda;\,\alpha,\,\beta} b+a\cdot_{\lambda;\,\alpha,\,\beta} N_\beta(b)-N_{\alpha\beta} \big(a\cdot_{\lambda;\,\alpha,\,\beta} b \big),\,\text{ for }\, a, b \in A.
$$
Then we have
\begin{enumerate}
\item \label{item:associativity} The pair $ \big( A , (\circ_{\lambda;\,\alpha,\,\beta}^{N} )_{\alpha,\,\beta\in\Omega}\big)$ is a new $\Omega$-associative conformal algebra, denoted by $ A^{N}$.
 \item \label{item:homomorphism} $(N_\omega)_{\omega\in\Omega}$ is a family algebra homomorphism from $ A^{N}$ to the original $\Omega$-associative conformal algebra $ A $, namely
$$
N_{\alpha\beta} \big(a \circ_{\lambda;\,\alpha,\,\beta}^{N} b \big)=N_\alpha(a)\cdot_{\lambda;\,\alpha,\,\beta} N_\beta(b), \,\text{ for }\, a, b \in  A.
$$
\end{enumerate}
\end{prop}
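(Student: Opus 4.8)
The plan is to prove part (2) first, since it is essentially a reformulation of the Nijenhuis condition and is the principal tool for part (1). Applying $N_{\alpha\beta}$ to the definition of $\circ^N_{\lambda;\,\alpha,\,\beta}$ gives
\[
N_{\alpha\beta}\big(a\circ^N_{\lambda;\,\alpha,\,\beta}b\big)=N_{\alpha\beta}\big(N_\alpha(a)\Cdot\lambda,\alpha,\beta; b+a\Cdot\lambda,\alpha,\beta; N_\beta(b)-N_{\alpha\beta}(a\Cdot\lambda,\alpha,\beta; b)\big),
\]
and by the Nijenhuis identity Eq.~(\ref{eq:nijenhuis family algebras}) the right-hand side is exactly $N_\alpha(a)\Cdot\lambda,\alpha,\beta; N_\beta(b)$. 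This is the homomorphism statement of part (2), and it will be used repeatedly below.

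For part (1), conformal sesquilinearity-II for $\circ^N$ is routine: each $N_\omega$ is $\mathbb{F}[\partial]$-linear and the original product is conformal sesquilinear, so each of the three summands defining $a\circ^N_{\lambda;\,\alpha,\,\beta}b$ transforms correctly under $\partial$ acting in either slot, and hence so does their combination. The substantive point is the associativity identity
\[
(a\circ^N_{\lambda;\,\alpha,\,\beta}b)\circ^N_{\lambda+\mu;\,\alpha\beta,\,\gamma}c=a\circ^N_{\lambda;\,\alpha,\,\beta\gamma}(b\circ^N_{\mu;\,\beta,\,\gamma}c).
\]
First I would expand the left-hand side by the definition of $\circ^N$ and immediately invoke part (2) to replace $N_{\alpha\beta}(a\circ^N b)$ by $N_\alpha(a)\Cdot\lambda,\alpha,\beta; N_\beta(b)$; symmetrically, on the right I would replace $N_{\beta\gamma}(b\circ^N c)$ by $N_\beta(b)\Cdot\mu,\beta,\gamma; N_\gamma(c)$. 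After this substitution every term on both sides is an iterated original $\lambda$-product of $a,b,c$, possibly with $N$ inserted in one or two slots and possibly with an outer $N$ applied. Using the conformal associativity of Definition~\ref{defn:conformal algebras} to render the triple products unambiguous and to align the spectral parameters, I would then collect terms; a first pass cancels the summands carrying one $N$ on each of two distinct entries, together with the common term $-N(a\cdot N(b)\cdot c)$, which appears identically on both sides.

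The crux — and the step I expect to be the main obstacle — is to reconcile the two remaining blocks, each consisting (with semigroup and spectral indices now suppressed for readability) of a product of the shape $N(\cdot)\cdot N(\cdot)$ together with two terms in which $N$ is applied to a triple product of $a,b,c$. Here I would apply the Nijenhuis identity Eq.~(\ref{eq:nijenhuis family algebras}) a \emph{second} time: on the left to the pair $(a\Cdot\lambda,\alpha,\beta; b,\;c)$, expanding $N_{\alpha\beta}(a\cdot b)\cdot N_\gamma(c)$, and on the right to the pair $(a,\;b\Cdot\mu,\beta,\gamma; c)$, expanding $N_\alpha(a)\cdot N_{\beta\gamma}(b\cdot c)$. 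Each expansion produces three terms in which $N$ is applied to a triple product; within each block one newly produced term cancels against a pre-existing one, and both blocks collapse to the single common expression $-N\big(a\cdot b\cdot N(c)\big)-N\big(N(a)\cdot b\cdot c\big)+N\big(N(a\cdot b\cdot c)\big)$, which establishes the equality. The only genuine care required is bookkeeping of the semigroup indices (which concatenate as $\alpha,\beta,\gamma\mapsto\alpha\beta\gamma$ through every product) and of the parameters $\lambda,\mu$ through each use of associativity and sesquilinearity; no idea beyond the twofold use of the Nijenhuis identity enters.
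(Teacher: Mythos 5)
Your proposal is correct and follows essentially the same route as the paper: both proofs rest on exactly two applications of the Nijenhuis identity Eq.~(\ref{eq:nijenhuis family algebras}) — once to collapse $N_{\alpha\beta}(a\circ^N_{\lambda;\,\alpha,\,\beta}b)$ to $N_\alpha(a)\cdot_{\lambda;\,\alpha,\,\beta}N_\beta(b)$ (which is precisely part (2)) and once more to expand $N_{\alpha\beta}(a\cdot_{\lambda;\,\alpha,\,\beta}b)\cdot_{\lambda+\mu;\,\alpha\beta,\,\gamma}N_\gamma(c)$ on the left and $N_\alpha(a)\cdot_{\lambda;\,\alpha,\,\beta\gamma}N_{\beta\gamma}(b\cdot_{\mu;\,\beta,\,\gamma}c)$ on the right — followed by term-by-term matching via conformal associativity. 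Your only organizational difference is stating part (2) first and citing it as a lemma, where the paper performs that substitution inline and records the final matching as an explicit permutation of seven terms; the mathematics is identical.
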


\begin{proof}
(\ref{item:associativity}). For $a,b,c\in A^N$ and $\alpha,\beta,\gamma\in\Omega$, on the left hand side, we have
\begin{align*}
&(a\circ^N_{\lambda;\,\alpha,\,\beta}b)\circ_{\lambda+\mu;\,\alpha\beta,\,\gamma}^N c\\
={}&\left(N_\alpha(a)\cdot_{\lambda;\,\alpha,\,\beta}b+a\cdot_{\lambda;\,\alpha,\,\beta}N_\beta(b)
-N_{\alpha\beta}(a\cdot_{\lambda;\,\alpha,\,\beta} b)\right)\circ_{\lambda+\mu;\,\alpha\beta,\,\gamma}^N c\\
={}&N_{\alpha\beta}\left(N_\alpha(a)\cdot_{\lambda;\,\alpha,\,\beta}b+a\cdot_{\lambda;\,\alpha,\,\beta}N_\beta(b)
-N_{\alpha\beta}(a\cdot_{\lambda;\,\alpha,\,\beta} b)\right)\cdot_{\lambda+\mu;\alpha\beta,\,\gamma}c\\
&+\left(N_\alpha(a)\cdot_{\lambda;\,\alpha,\,\beta}b+a\cdot_{\lambda;\,\alpha,\,\beta}N_\beta(b)
-N_{\alpha\beta}(a\cdot_{\lambda;\,\alpha,\,\beta} b)\right)\cdot_{\lambda+\mu;\,\alpha\beta,\,\gamma}N_\gamma(c)\\
&-N_{\alpha\beta\gamma}\left(\left(N_\alpha(a)\cdot_{\lambda;\,\alpha,\,\beta}b
+a\cdot_{\lambda;\,\alpha,\,\beta}N_\beta(b)
-N_{\alpha\beta}(a\cdot_{\lambda;\,\alpha,\,\beta} b)\right)\cdot_{\lambda+\mu;\,\alpha\beta,\,\gamma}c\right)\\
={}&\left(N_\alpha(a)\cdot_{\lambda;\,\alpha,\,\beta}N_\beta(b)\right)\cdot_{\lambda+\mu;\alpha\beta,\,\gamma}c
+\left(N_\alpha(a)\cdot_{\lambda;\,\alpha,\,\beta} b\right)\cdot_{\lambda+\mu;\alpha\beta,\,\gamma}N_\gamma(c)
+\left(a\cdot_{\lambda;\,\alpha,\,\beta}N_\beta(b)\right)\cdot_{\lambda+\mu;\alpha\beta,\,\gamma}N_\gamma(c)\\
&\underline{-N_{\alpha\beta}(a\cdot_{\lambda;\,\alpha,\,\beta}b)
\cdot_{\lambda+\mu;\alpha\beta,\,\gamma}N_\gamma(c)}
-N_{\alpha\beta\gamma}\left(\left(N_\alpha(a)\cdot_{\lambda;\,\alpha,\,\beta}b\right)
\cdot_{\lambda+\mu;\alpha\beta,\,\gamma}c\right)
-N_{\alpha\beta\gamma}\left(\left(a\cdot_{\lambda;\,\alpha,\,\beta}N_\beta(b)\right)
\cdot_{\lambda+\mu;\alpha\beta,\,\gamma}c\right)\\
&+N_{\alpha\beta\gamma}\left(N_{\alpha\beta}(a\cdot_{\lambda;\,\alpha,\,\beta}b)
\cdot_{\lambda+\mu;\alpha\beta,\,\gamma}\right)\quad{\text{ (by Eq.~(\ref{eq:nijenhuis family algebras}) of the underline item})}\\
={}&\left(N_\alpha(a)\cdot_{\lambda;\,\alpha,\,\beta}N_\beta(b)\right)\cdot_{\lambda+\mu;\alpha\beta,\,\gamma}c
+\left(N_\alpha(a)\cdot_{\lambda;\,\alpha,\,\beta} b\right)\cdot_{\lambda+\mu;\alpha\beta,\,\gamma}N_\gamma(c)
+\left(a\cdot_{\lambda;\,\alpha,\,\beta}N_\beta(b)\right)\cdot_{\lambda+\mu;\alpha\beta,\,\gamma}N_\gamma(c)\\
&-\underline{N_{\alpha\beta\gamma}\left(N_{\alpha\beta}\left(a\cdot_{\lambda;\,\alpha,\,\beta}b\right)
\cdot_{\lambda+\mu;\alpha\beta,\,\gamma}c\right)}
-N_{\alpha\beta\gamma}\left(\left(a\cdot_{\lambda;\,\alpha,\,\beta}b\right)
\cdot_{\lambda+\mu;\alpha\beta,\,\gamma}N_\gamma(c)\right)
+N^2_{\alpha\beta\gamma}\left(\left(a\cdot_{\lambda;\,\alpha,\,\beta}b\right)
\cdot_{\lambda+\mu;\alpha\beta,\,\gamma}c\right)\\
&-N_{\alpha\beta\gamma}\left(\left(N_\alpha(a)\cdot_{\lambda;\,\alpha,\,\beta}b\right)
\cdot_{\lambda+\mu;\alpha\beta,\,\gamma}c\right)
-N_{\alpha\beta\gamma}\left(\left(a\cdot_{\lambda;\,\alpha,\,\beta}N_\beta(b)\right)
\cdot_{\lambda+\mu;\alpha\beta,\,\gamma}c\right)
+\underline{N_{\alpha\beta\gamma}\left(N_{\alpha\beta}(a\cdot_{\lambda;\,\alpha,\,\beta}b)
\cdot_{\lambda+\mu;\alpha\beta,\,\gamma}\right)}.
\end{align*}

On the right hand side, we have
\begin{align*}
&a\circ_{\lambda;\,\alpha,\,\beta\gamma}^N(b\circ_{\mu;\,\beta,\,\gamma}^Nc)\\
={}&a\circ_{\lambda;\,\alpha,\,\beta\gamma}^N\left(N_\beta(b)\cdot_{\mu;\,\beta,\,\gamma}c
+b\cdot_{\mu;\,\beta,\,\gamma} N_\gamma(c)-N_{\beta\gamma}(b\cdot_{\mu;\,\beta,\,\gamma}c)\right)\\
={}&N_\alpha(a)\cdot_{\lambda;\,\alpha,\,\beta\gamma}\left(N_\beta(b)\cdot_{\mu;\,\beta,\,\gamma}c
+b\cdot_{\mu;\,\beta,\,\gamma} N_\gamma(c)-N_{\beta\gamma}(b\cdot_{\mu;\,\beta,\,\gamma}c)\right)\\
&+a\cdot_{\lambda;\,\alpha,\,\beta\gamma}N_{\beta\gamma}\left(N_\beta(b)\cdot_{\mu;\,\beta,\,\gamma}c
+b\cdot_{\mu;\,\beta,\,\gamma} N_\gamma(c)-N_{\beta\gamma}(b\cdot_{\mu;\,\beta,\,\gamma}c)\right)\\
&-N_{\alpha\beta\gamma}\left(a\cdot_{\lambda;\,\alpha,\,\beta\gamma}\left(N_\beta(b)\cdot_{\mu;\,\beta,\,\gamma}c
+b\cdot_{\mu;\,\beta,\,\gamma} N_\gamma(c)-N_{\beta\gamma}(b\cdot_{\mu;\,\beta,\,\gamma}c)\right)\right)\\
={}&N_\alpha(a)\cdot_{\lambda;\,\alpha,\,\beta\gamma}\left(N_\beta(b)\cdot_{\mu;\,\beta,\,\gamma}c\right)
+N_\alpha(a)\cdot_{\lambda;\,\alpha,\,\beta\gamma}\left(b\cdot_{\mu;\,\beta,\,\gamma}N_\gamma(c)\right)
\underline{-N_\alpha(a)\cdot_{\lambda;\,\alpha,\,\beta\gamma}N_{\beta\gamma}(b\cdot_{\mu;\,\beta,\,\gamma}c)}\\
&+a\cdot_{\lambda;\,\alpha,\,\beta\gamma}N_{\beta\gamma}(N_\beta(b)\cdot_{\mu;\,\beta,\,\gamma}c)
+a\cdot_{\lambda;\,\alpha,\,\beta\gamma}N_{\beta\gamma}\left(b\cdot_{\mu;\,\beta,\,\gamma}N_\gamma(c)\right)
-a\cdot_{\lambda;\,\alpha,\,\beta\gamma}N_{\beta\gamma}^2(b\cdot_{\mu;\,\beta,\,\gamma}c)\\
&-N_{\alpha\beta\gamma}\left(a\cdot_{\lambda;\,\alpha,\,\beta\gamma}
\left(N_\beta(b)\cdot_{\mu;\,\beta,\,\gamma}c\right)\right)-N_{\alpha\beta\gamma}
\left(a\cdot_{\lambda;\,\alpha,\,\beta\gamma}\left(b\cdot_{\mu;\,\beta,\,\gamma}N_\gamma(c)\right)\right)
+N_{\alpha\beta\gamma}
\left(a\cdot_{\lambda;\,\alpha,\,\beta\gamma}N_{\beta\gamma}(b\cdot_{\mu;\,\beta,\,\gamma}c)\right)\\
&\hspace{5cm}{\text{ (by Eq.~(\ref{eq:nijenhuis family algebras}) of the underline item})}\\
={}&N_\alpha(a)\cdot_{\lambda;\,\alpha,\,\beta\gamma}\left(N_\beta(b)\cdot_{\mu;\,\beta,\,\gamma}c\right)
+N_\alpha(a)\cdot_{\lambda;\,\alpha,\,\beta\gamma}\left(b\cdot_{\mu;\,\beta,\,\gamma}N_\gamma(c)\right)
+a\cdot_{\lambda;\,\alpha,\,\beta\gamma}\left(N_\beta(b)\cdot_{\mu;\,\beta,\,\gamma}N_\gamma(c)\right)\\
&-N_{\alpha\beta\gamma}\left(N_\alpha(a)\cdot_{\lambda;\,\alpha,\,\beta\gamma}
\left(b\cdot_{\mu;\,\beta,\,\gamma}c\right)\right)
-\underline{N_{\alpha\beta\gamma}\left(a\cdot_{\lambda;\,\alpha,\,\beta\gamma}N_{\beta\gamma}
\left(b\cdot_{\mu;\,\beta,\,\gamma}c\right)\right)}
+N^2_{\alpha\beta\gamma}\left(a\cdot_{\lambda;\,\alpha,\,\beta\gamma}
\left(b\cdot_{\mu;\,\beta,\,\gamma}c\right)\right)\\
&-N_{\alpha\beta\gamma}\left(a\cdot_{\lambda;\,\alpha,\,\beta\gamma}
\left(N_\beta(b)\cdot_{\mu;\,\beta,\,\gamma}c\right)\right)-N_{\alpha\beta\gamma}
\left(a\cdot_{\lambda;\,\alpha,\,\beta\gamma}\left(b\cdot_{\mu;\,\beta,\,\gamma}N_\gamma(c)\right)\right)
+\underline{N_{\alpha\beta\gamma}
\left(a\cdot_{\lambda;\,\alpha,\,\beta\gamma}N_{\beta\gamma}(b\cdot_{\mu;\,\beta,\,\gamma}c)\right)}.
\end{align*}
Now we see, deleting the same items labelled by underline and using the associativity of $\Omega$-associative conformal algebras, then the $i$-th term in the expansion of the left hand side equals to the $\sigma(i)$-th term in the
 expansion of the right hand side, where $\sigma$ is the following permutation of order $7$:
 \begin{equation*}
\begin{pmatrix}
     i \\
     \sigma(i)
\end{pmatrix}
=
\begin{pmatrix}
1 & 2 & 3 & 4 & 5 & 6 & 7 \\
1 & 2 & 3 & 7 & 5 & 4 & 6
\end{pmatrix}.
\end{equation*}
Thus
\[(a\circ^N_{\lambda;\,\alpha,\,\beta}b)\circ_{\lambda+\mu;\,\alpha\beta,\,\gamma}^N c
=a\circ_{\lambda;\,\alpha,\,\beta\gamma}^N(b\circ_{\mu;\,\beta,\,\gamma}^Nc).\]

(\ref{item:homomorphism}). It is directly from Eq.~(\ref{eq:nijenhuis family algebras}).
This completes the proof.
\end{proof}

\begin{theorem}\label{thm:trival deformation}
 Let $(N_\omega)_{\omega\in\Omega}:  A  \rightarrow  A $ be a family of Nijenhuis operators. For all $a, b \in  A $, define
$$
\omega_{\lambda;\,\alpha,\,\beta}(a, b)=N_\alpha(a)\cdot_{\lambda;\,\alpha,\,\beta} b+a\cdot_{\lambda;\,\alpha,\,\beta} N_\beta(b)-N_{\alpha\beta}(a\cdot_{\lambda;\,\alpha,\,\beta} b).
$$
Then $(\omega_{\lambda;\,\alpha,\,\beta})_{\alpha,\,\beta\in\Omega}$ is a deformation of $ A $ and this deformation is a trivial one.
\end{theorem}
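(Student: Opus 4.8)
The plan is to avoid verifying the cocycle conditions Eq.~(\ref{eq:condition1}) and Eq.~(\ref{eq:condition2}) directly, and instead to work with the transformation $T_{t,\,\omega}=\mathrm{id}+tN_\omega$ over $\mathbb{F}[[t]]$, proving a single intertwining identity from which both assertions follow. I first observe that, by construction, the bilinear map $\omega_{\lambda;\,\alpha,\,\beta}$ coincides with the product $\circ_{\lambda;\,\alpha,\,\beta}^{N}$ of the $\Omega$-associative conformal algebra $A^N$ built in Proposition~\ref{prop:associativity}; in particular $\omega$ is conformal sesquilinear, so each $\circ_{\lambda;\,\alpha,\,\beta}^{t}=\cdot_{\lambda;\,\alpha,\,\beta}+t\,\omega_{\lambda;\,\alpha,\,\beta}$ is a well-defined family of conformal bilinear maps on $A[[t]]$.

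The key step is to prove the intertwining identity
\[
T_{t,\,\alpha\beta}\big(a\circ_{\lambda;\,\alpha,\,\beta}^{t}b\big)=T_{t,\,\alpha}(a)\cdot_{\lambda;\,\alpha,\,\beta}T_{t,\,\beta}(b)
\]
as an identity in $A[[t]]$. I would expand both sides in powers of $t$: the coefficients of $t^0$ agree trivially; the coefficients of $t^1$ coincide by the defining formula for $\omega$, which is Eq.~(\ref{eq:deformation1}); and the coefficients of $t^2$ reduce to $N_{\alpha\beta}(\omega_{\lambda;\,\alpha,\,\beta}(a,b))=N_\alpha(a)\cdot_{\lambda;\,\alpha,\,\beta}N_\beta(b)$, which is the Nijenhuis relation Eq.~(\ref{eq:nijenhuis family algebras}), equivalently Proposition~\ref{prop:associativity}(\ref{item:homomorphism}). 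I stress that this identity is established unconditionally, without presupposing that $\circ^{t}$ is associative.

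From this identity I would deduce that $(\omega_{\lambda;\,\alpha,\,\beta})_{\alpha,\,\beta\in\Omega}$ is a deformation. Since each $T_{t,\,\omega}$ is invertible over $\mathbb{F}[[t]]$, with inverse $\sum_{k\geq 0}(-t)^kN_\omega^k$, the identity rewrites as $a\circ_{\lambda;\,\alpha,\,\beta}^{t}b=T_{t,\,\alpha\beta}^{-1}\big(T_{t,\,\alpha}(a)\cdot_{\lambda;\,\alpha,\,\beta}T_{t,\,\beta}(b)\big)$, exhibiting $\circ^{t}$ as the transport of the original product along the family $T_t$. Substituting this into both $(a\circ_{\lambda;\,\alpha,\,\beta}^{t}b)\circ_{\lambda+\mu;\,\alpha\beta,\,\gamma}^{t}c$ and $a\circ_{\lambda;\,\alpha,\,\beta\gamma}^{t}(b\circ_{\mu;\,\beta,\,\gamma}^{t}c)$ and reapplying the intertwining identity, both expressions collapse to $T_{t,\,\alpha\beta\gamma}^{-1}$ applied to a triple $\cdot$-product of the elements $T_{t,\,\alpha}(a),T_{t,\,\beta}(b),T_{t,\,\gamma}(c)$; these agree by the associativity of $\cdot$ in $A$, so $(\circ_{\lambda;\,\alpha,\,\beta}^{t})_{\alpha,\,\beta\in\Omega}$ is $\Omega$-associative for every $t$, and $(\omega_{\lambda;\,\alpha,\,\beta})_{\alpha,\,\beta\in\Omega}$ is a deformation in the sense of Definition~\ref{defn:deformation}. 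Triviality is then immediate, since the intertwining identity proved above is exactly the defining relation of a trivial deformation.

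I expect the only delicate point to be the bookkeeping of the semigroup indices in the transport computation: one must check that the index carried by each occurrence of $T_t$ matches that of the element it acts on. This holds because the intertwining identity produces the composite index $\alpha\beta$, and because the semigroup multiplication satisfies $(\alpha\beta)\gamma=\alpha(\beta\gamma)$, so that the single outer map $T_{t,\,\alpha\beta\gamma}$ is common to both sides.
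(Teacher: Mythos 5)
Your proof is correct, and it takes a genuinely different route from the paper's. The paper deduces the two conditions for a deformation separately: it invokes Proposition~\ref{prop:associativity} (whose proof is a long term-by-term matching) for the associativity condition Eq.~(\ref{eq:condition2}), asserts the cocycle condition Eq.~(\ref{eq:condition1}) from the Hochschild coboundary formula, and then reads off triviality from Eqs.~(\ref{eq:deformation1})--(\ref{eq:deformation2}). You instead prove the intertwining identity $T_{t,\,\alpha\beta}(a\circ^t_{\lambda;\,\alpha,\,\beta}b)=T_{t,\,\alpha}(a)\cdot_{\lambda;\,\alpha,\,\beta}T_{t,\,\beta}(b)$ first --- a three-line check of the $t^0$, $t^1$, $t^2$ coefficients using only the definition of $\omega$ and the Nijenhuis relation Eq.~(\ref{eq:nijenhuis family algebras}) --- and then obtain associativity of $\circ^t$ by transporting the original product along the invertible maps $T_{t,\,\omega}$ over $\mathbb{F}[[t]]$. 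Since the resulting associativity identity is an equality of polynomials in $t$, it holds coefficient-wise and hence for every specialization of $t$, which is what Definition~\ref{defn:deformation} requires; the index bookkeeping works because $(\alpha\beta)\gamma=\alpha(\beta\gamma)$, as you note. Your argument is shorter, makes the triviality conceptually transparent (the deformed product is literally a gauge transform of the original one), and bypasses the heavy computation in Proposition~\ref{prop:associativity} for the purposes of this theorem; the paper's route has the compensating advantage that the explicit cocycle identities it verifies are reused immediately afterwards in Theorem~\ref{thm:main result}.
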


\begin{proof}
By Proposition~\ref{prop:associativity}, we know that $(\omega_{\lambda;\,\alpha,\,\beta})_{\alpha,\,\beta\in\Omega}$ is $\Omega$-associative, that is to say Eq.~(\ref{eq:condition2}) holds. From the coboundary operator in the Hochschild cohomology complex of $A$ and Eqs.~(\ref{1-cocycle}-\ref{2-cocycle}), replacing $f$ with $\omega$, we have $(\delta^2(\omega))_{\lambda,\,\mu;\,\alpha,\,\beta,\,\gamma}=0,$ this is exactly Eq.~(\ref{eq:condition1}). Equivalently, Eqs.~(\ref{eq:deformation1}-\ref{eq:deformation2}) are satisfied and therefore $(\omega_{\lambda;\,\alpha,\,\beta})_{\alpha,\,\beta\in\Omega}$ is a trivial deformation of $\Omega$-associative conformal algebras. This completes the proof.
\end{proof}

\begin{remark}
By Theorem~\ref{thm:trival deformation}, for $a,b\in A$ and $\alpha,\beta\in\Omega$, the family
\begin{equation}\label{eq:compatible product}
(\omega^N_{\lambda;\,\alpha,\,\beta})_{\alpha,\,\beta\in\Omega}:(a,b)\mapsto a \circ_{\lambda;\,\alpha,\,\beta}^{N} b=N_\alpha(a)\cdot_{\lambda;\,\alpha,\,\beta} b+a\cdot_{\lambda;\,\alpha,\,\beta} N_\beta(b)-N_{\alpha\beta} \big(a\cdot_{\lambda;\,\alpha,\,\beta} b \big),
\end{equation}
defines an $\Omega$-associative conformal algebra and it also defines a new algebra structure on $A$.
From Proposition~\ref{prop:associativity}(\ref{item:homomorphism}), we have a $2$-cochain $(\psi_{\lambda;\,\alpha,\,\beta}^N)_{\alpha,\,\beta\in\Omega}\in C^2(A,A)$ as follows
\[\psi_{\lambda;\,\alpha,\,\beta}^N(a,b)=N_\alpha(a)\cdot_{\lambda;\,\alpha,\,\beta} N_\beta(b)-N_{\alpha\beta} \big(a \circ_{\lambda;\,\alpha,\,\beta}^{N} b \big),\,\text{ for } a,b\in A,\alpha,\beta\in\Omega.\]

It is obvious $\psi_{\lambda;\,\alpha,\,\beta}^N(a, b)=0$ if and only if $(N_\omega)_{\omega\in\Omega}$ is a family of Nijenhuis operators.
\end{remark}

Now we arrive at our main result in this subsection.
\begin{theorem}\label{thm:main result}
Let $A$ be an $\Omega$-associative conformal algebra and $\Omega$ a semigroup.
\begin{enumerate}
 \item \label{item:compatible} Denoted  by $\omega_{\lambda;\,\alpha,\,\beta}(a,b):=a\cdot_{\lambda;\,\alpha,\,\beta}b,$ then $(\omega^N_{\lambda;\,\alpha,\,\beta})_{\alpha,\,\beta\in\Omega}$ defined by Eq.~(\ref{eq:compatible product}) is an $\Omega$-associative $\lambda$-product compatible with
        $(\omega_{\lambda;\,\alpha,\,\beta})_{\alpha,\,\beta\in\Omega}$, i.e., the maps $(\omega_{\lambda;\,\alpha,\,\beta}+t\omega^N_{\lambda;\,\alpha,\,\beta})_{\alpha,\,\beta\in\Omega}$ are associative for any $t$.
\item \label{item: 2-cocycle} The pair $(A, (\omega^N_{\lambda;\,\alpha,\,\beta})_{\alpha,\,\beta\in\Omega})$ is an $\Omega$-associative conformal algebra if and only if $(\psi_{\lambda;\,\alpha,\,\beta}^N)_{\alpha,\,\beta\in\Omega}$ is a $2$-cocycle in $C^2(A,A)$, that is to say
    \begin{align*}
    &(\delta^2\psi^N)_{\lambda,\,\mu;\,\alpha,\,\beta,\,\gamma}(a,b,c)\\
    ={}&a\cdot_{\lambda;\,\alpha,\,\beta\gamma} \varphi_{\mu;\,\beta,\,\gamma}^{N}(b, c)-\varphi_{\lambda+\mu;\,\alpha\beta,\,\gamma}^{N} \big(a\cdot_{\lambda;\,\alpha,\,\beta} b, c \big)+\varphi_{\lambda;\,\alpha,\,\beta\gamma}^{N} \big(a, b\cdot_{\mu;\,\beta,\,\gamma} c \big)-\varphi_{\lambda;\,\alpha,\,\beta}^{N}(a, b)\cdot_{\lambda+\mu;\,\alpha\beta,\,\gamma} c=0.
    \end{align*}

\end{enumerate}
\end{theorem}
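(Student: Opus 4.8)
The plan is to reduce both assertions to two structural facts about the $2$-cochain $\omega^N$: that it is a Hochschild coboundary, and that its associator is exactly $\delta^2\psi^N$. I would begin with the observation that $\omega^N$ is the coboundary of $N$. Comparing the defining formula Eq.~(\ref{eq:compatible product}) with the expression for $\delta^1$ in Eq.~(\ref{1-cocycle}), one sees at once that
\[
\omega^N_{\lambda;\,\alpha,\,\beta}(a,b)=(\delta^1 N)_{\lambda;\,\alpha,\,\beta}(a,b),
\]
where $N=(N_\omega)_{\omega\in\Omega}$ is regarded as a $1$-cochain in $C^1(A,A)$. Since $\delta^2\circ\delta^1=0$, this gives $\delta^2\omega^N=0$, i.e. $(\omega^N_{\lambda;\,\alpha,\,\beta})_{\alpha,\,\beta\in\Omega}$ is a $2$-cocycle. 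For assertion~(\ref{item:compatible}) I would then invoke the deformation analysis already recorded in Eqs.~(\ref{eq:condition1})--(\ref{eq:condition2}): the family $(\cdot_{\lambda;\,\alpha,\,\beta}+t\,\omega^N_{\lambda;\,\alpha,\,\beta})_{\alpha,\,\beta\in\Omega}$ is $\Omega$-associative for every $t$ exactly when the cocycle condition Eq.~(\ref{eq:condition1}) and the associativity condition Eq.~(\ref{eq:condition2}) both hold for $\omega^N$. The former is the identity $\delta^2\omega^N=0$ just obtained, and the latter is the associativity of $\omega^N$ supplied by Proposition~\ref{prop:associativity}(\ref{item:associativity}); hence the compatibility follows.

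The substance of the theorem is assertion~(\ref{item: 2-cocycle}), which I would deduce from the single identity
\[
\big(a\circ_{\lambda;\,\alpha,\,\beta}^{N}b\big)\circ_{\lambda+\mu;\,\alpha\beta,\,\gamma}^{N}c
-a\circ_{\lambda;\,\alpha,\,\beta\gamma}^{N}\big(b\circ_{\mu;\,\beta,\,\gamma}^{N}c\big)
=(\delta^2\psi^N)_{\lambda,\,\mu;\,\alpha,\,\beta,\,\gamma}(a,b,c),
\]
valid for an arbitrary $\mathbb{F}[\partial]$-linear family $(N_\omega)_{\omega\in\Omega}$. Granting this, the left-hand side is precisely the associator of the product $\circ^N=\omega^N$, so $(A,(\omega^N_{\lambda;\,\alpha,\,\beta})_{\alpha,\,\beta\in\Omega})$ is $\Omega$-associative if and only if the right-hand side vanishes, that is, if and only if $\psi^N$ is a $2$-cocycle, which is exactly the claim.

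To establish this identity I would expand both sides into monomials in $\cdot$ and the $N_\omega$. On the left I substitute $a\circ^N_{\lambda;\,\alpha,\,\beta}b=N_\alpha(a)\cdot_{\lambda;\,\alpha,\,\beta}b+a\cdot_{\lambda;\,\alpha,\,\beta}N_\beta(b)-N_{\alpha\beta}(a\cdot_{\lambda;\,\alpha,\,\beta}b)$ into both bracketings; on the right I use
\[
\psi^N_{\lambda;\,\alpha,\,\beta}(a,b)=N_\alpha(a)\cdot_{\lambda;\,\alpha,\,\beta}N_\beta(b)-N_{\alpha\beta}\big(N_\alpha(a)\cdot_{\lambda;\,\alpha,\,\beta}b\big)-N_{\alpha\beta}\big(a\cdot_{\lambda;\,\alpha,\,\beta}N_\beta(b)\big)+N_{\alpha\beta}^2\big(a\cdot_{\lambda;\,\alpha,\,\beta}b\big).
\]
After cancellation each side reduces to fourteen monomials, and the two sides match term by term once every triple product is brought to a common bracketing by the $\Omega$-associativity of the original product, for instance $(N_\alpha(a)\cdot_{\lambda;\,\alpha,\,\beta}b)\cdot_{\lambda+\mu;\,\alpha\beta,\,\gamma}N_\gamma(c)=N_\alpha(a)\cdot_{\lambda;\,\alpha,\,\beta\gamma}(b\cdot_{\mu;\,\beta,\,\gamma}N_\gamma(c))$. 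I expect the only real obstacle to be bookkeeping: tracking the spectral parameters $\lambda,\mu$ and the semigroup indices $\alpha,\beta,\gamma$ consistently through each application of associativity, so that the cancellations and the remaining identifications line up exactly. No idea beyond this careful index management is needed, and once the identity is in hand both parts follow formally.
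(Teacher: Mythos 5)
Your handling of part~(\ref{item: 2-cocycle}) coincides with the paper's: both of you reduce the claim to the identity $\big(a\circ^N_{\lambda;\,\alpha,\,\beta}b\big)\circ^N_{\lambda+\mu;\,\alpha\beta,\,\gamma}c-a\circ^N_{\lambda;\,\alpha,\,\beta\gamma}\big(b\circ^N_{\mu;\,\beta,\,\gamma}c\big)=(\delta^2\psi^N)_{\lambda,\,\mu;\,\alpha,\,\beta,\,\gamma}(a,b,c)$ for an arbitrary $\mathbb{F}[\partial]$-linear family $(N_\omega)_{\omega\in\Omega}$, and both establish it by expanding each side into monomials and matching them via the associativity of the original product together with the mixed identity Eq.~(\ref{eq:deformation associativity}); this is exactly the comparison of Eq.~(\ref{eq:asso}) with Eq.~(\ref{eq:cocy}) in the paper. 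Your observation that $\omega^N=\delta^1 N$ (with $N$ viewed as a $1$-cochain), so that the compatibility identity $\delta^2\omega^N=0$ falls out of $\delta^2\circ\delta^1=0$, is a genuinely slicker route to Eq.~(\ref{eq:condition1}) than the paper's direct term-by-term expansion, and is worth keeping.

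There is, however, a gap in your part~(\ref{item:compatible}). To cover the coefficient of $t^2$ in the associator of $\omega+t\omega^N$ — that is, Eq.~(\ref{eq:condition2}), the associativity of $\omega^N$ itself — you invoke Proposition~\ref{prop:associativity}(\ref{item:associativity}). That proposition is stated and proved under the hypothesis that $(N_\omega)_{\omega\in\Omega}$ is a family of Nijenhuis operators, i.e.\ that Eq.~(\ref{eq:nijenhuis family algebras}) holds, and its proof uses that relation essentially (it is applied to the underlined terms). In Theorem~\ref{thm:main result} the family $(N_\omega)$ is arbitrary: if it were Nijenhuis, then $\psi^N\equiv 0$ and part~(\ref{item: 2-cocycle}) would be vacuous. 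For a general family $\omega^N$ is \emph{not} associative; by your own identity in part~(\ref{item: 2-cocycle}) its associator equals $\delta^2\psi^N$, which need not vanish. The only content of part~(\ref{item:compatible}) that is actually available (and the only thing the paper's proof of that part establishes) is the $t$-linear mixed identity Eq.~(\ref{eq:deformation associativity}), which your coboundary observation already delivers; the appeal to Proposition~\ref{prop:associativity} should be dropped, and ``compatible'' read as that first-order condition.
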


\begin{proof}
 (\ref{item:compatible}). First for all $a, b, c \in A$ and $\alpha,\beta,\gamma\in\Omega$, we show that the $(\omega^N_{\lambda;\,\alpha,\,\beta})_{\alpha,\,\beta\in\Omega}$ is compatible with $(\omega_{\lambda;\,\alpha,\,\beta})_{\alpha,\,\beta\in\Omega}$, that is
\begin{equation}\label{eq:deformation associativity}
\big(a \circ_{\lambda;\,\alpha,\,\beta}^{N} b\big)\cdot_{\lambda+\mu;\,\alpha\beta,\,\gamma} c+\big(a\cdot_{\lambda;\,\alpha,\,\beta} b\big) \circ_{\lambda+\mu;\,\beta,\,\gamma}^{N} c=a\cdot_{\lambda;\,\alpha,\,\beta\gamma}\big(b \circ_{\mu;\,\beta,\,\gamma}^{N} c\big)+a \circ_{\lambda;\,\alpha,\,\beta\gamma}^{N}\big(b\cdot_{\mu;\,\beta,\,\gamma} c\big)
.
\end{equation}
Then we have
\begin{align*}
& \big(a \circ_{\lambda;\,\alpha,\,\beta}^{N} b\big)\cdot_{\lambda+\mu;\,\alpha\beta,\,\gamma} c+ \big(a\cdot_{\lambda;\,\alpha,\,\beta} b\big) \circ_{\lambda+\mu;\,\alpha\beta,\,\gamma}^{N} c \\
={}& \big(N_\alpha(a)\cdot_{\lambda;\,\alpha,\,\beta} b+a\cdot_{\lambda;\,\alpha,\,\beta} N_\beta(b)-N_{\alpha\beta} \big(a\cdot_{\lambda;\,\alpha,\,\beta} b\big)\big)\cdot_{\lambda+\mu;\,\alpha\beta,\,\gamma} c+N_{\alpha\beta} \big(a\cdot_{\lambda;\,\alpha,\,\beta} b\big)\cdot_{\lambda+\mu;\,\alpha\beta,\,\gamma} c\\
&+ \big(a\cdot_{\lambda;\,\alpha,\,\beta} b\big)\cdot_{\lambda+\mu;\,\alpha\beta,\,\gamma} N_\gamma(c)-N_{\alpha\beta\gamma} \big( \big(a\cdot_{\lambda;\,\alpha,\,\beta} b\big)\cdot_{\lambda+\mu;\,\alpha\beta,\,\gamma} c\big) \\
={}&\big(N_\alpha(a)\cdot_{\lambda;\,\alpha,\,\beta\gamma} b+a\cdot_{\lambda;\,\alpha,\,\beta\gamma} N_\beta(b)\big)\cdot_{\lambda+\mu;\,\alpha\beta,\,\gamma} c+ \big(a\cdot_{\lambda;\,\alpha,\,\beta} b\big)\cdot_{\lambda+\mu;\,\alpha\beta,\,\gamma} N_\gamma(c)-N_{\alpha\beta\gamma} \big( \big(a\cdot_{\lambda;\,\alpha,\,\beta} b\big)\cdot_{\lambda+\mu;\,\alpha\beta,\,\gamma} c\big) \\
={}& a\cdot_{\lambda;\,\alpha,\,\beta\gamma} \big(b\cdot_{\mu;\,\beta,\,\gamma} N_\gamma(c)\big)+a\cdot_{\lambda;\,\alpha,\,\beta\gamma} \big(N_\beta(b)\cdot_{\mu;\,\beta,\,\gamma} c\big)+N_\alpha(a)\cdot_{\lambda;\,\alpha,\,\beta\gamma} \big(b\cdot_{\mu;\,\beta,\,\gamma} c\big)-N_{\alpha\beta\gamma} \big(a\cdot_{\lambda;\,\alpha,\,\beta\gamma} \big(b\cdot_{\mu;\,\beta,\,\gamma} c\big)\big) \\
={}&a\cdot_{\lambda;\,\alpha,\,\beta\gamma} \big(b\cdot_{\mu;\,\beta,\,\gamma} N_\gamma(c)\big)+a\cdot_{\lambda;\,\alpha,\,\beta\gamma} \big(N_\beta(b)\cdot_{\mu;\,\beta,\,\gamma} c\big)
-a\cdot_{\lambda;\,\alpha,\,\beta\gamma} N_{\beta\gamma} \big(b\cdot_{\mu;\,\beta,\,\gamma} c\big)+a\cdot_{\lambda;\,\alpha,\,\beta\gamma} N_{\beta\gamma} \big(b\cdot_{\mu;\,\beta,\,\gamma} c\big)\\
&+N_\alpha(a)\cdot_{\lambda;\,\alpha,\,\beta\gamma} \big(b\cdot_{\mu;\,\beta,\,\gamma} c\big)-N_{\alpha\beta\gamma} \big(a\cdot_{\lambda;\,\alpha,\,\beta\gamma} \big(b\cdot_{\mu;\,\beta,\,\gamma} c\big)\big) \\
={}& a\cdot_{\lambda;\,\alpha,\,\beta\gamma} \big(b \circ_{\mu;\,\beta,\,\gamma}^{N} c\big)+a \circ_{\lambda;\,\alpha,\,\beta\gamma}^{N} \big(b\cdot_{\mu;\,\beta,\gamma} c\big) .
\end{align*}
The associativity of $\omega_{\lambda;\,\alpha,\,\beta}+t\omega^N_{\lambda;\,\alpha,\,\beta}$ is exactly  Eq.~(\ref{eq:deformation associativity}). Next, we have
\begin{align}
&\big(a \circ_{\lambda;\,\alpha,\,\beta}^{N} b\big) \circ_{\lambda+\mu;\,\alpha\beta,\,\gamma}^{N} c- a \circ_{\lambda;\,\alpha,\,\beta\gamma}^{N}\big(b \circ_{\mu;\,\beta,\,\gamma}^{N} c\big)\nonumber \\
={}& N_{\alpha\beta}\big(a \circ_{\lambda;\,\alpha,\,\beta}^{N} b\big)\cdot_{\lambda+\mu;\,\alpha\beta,\,\gamma} c+\big(a \circ_{\lambda;\,\alpha,\,\beta}^{N} b\big)\cdot_{\lambda+\mu;\,\alpha\beta,\,\gamma} N_\gamma(c)-N_{\alpha\beta\gamma}\big(\big(a \circ_{\lambda;\,\alpha,\,\beta}^{N} b\big)\cdot_{\lambda+\mu;\,\alpha\beta,\,\gamma} c\big)\nonumber \\
&-N_\alpha(a)\cdot_{\lambda;\,\alpha,\,\beta\gamma}\big(b \circ_{\mu;\,\beta,\,\gamma}^{N} c\big)-a\cdot_{\lambda;\,\alpha,\,\beta\gamma} N_{\beta\gamma}\big(b \circ_{\mu;\,\beta,\,\gamma}^{N} c\big)+N_{\alpha\beta\gamma}\big(a\cdot_{\lambda;\,\alpha,\,\beta\gamma}\big(b \circ_{\mu;\,\beta,\,\gamma}^{N} c\big)\big) \nonumber\\
={}&N_{\alpha\beta}\big(a \circ_{\lambda;\,\alpha,\,\beta}^{N} b\big)\cdot_{\lambda+\mu;\,\alpha\beta,\,\gamma} c-a\cdot_{\lambda;\,\alpha,\,\beta\gamma} N_{\beta\gamma}\big(b \circ_{\mu;\,\beta,\,\gamma}^{N} c\big)-N_{\alpha\beta\gamma}\big(\big(a \circ_{\lambda;\,\alpha,\,\beta}^{N} b\big)\cdot_{\lambda+\mu;\,\alpha\beta,\,\gamma} c\nonumber\\
&-a\cdot_{\lambda;\,\alpha,\,\beta\gamma}\big(b \circ_{\mu;\,\beta,\,\gamma}^{N} c\big)\big)
+\left(a\cdot_{\lambda;\,\alpha,\,\beta}N_\beta(b)+N_\alpha(a)\cdot_{\lambda;\,\alpha,\,\beta}b
-N_{\alpha\beta}(a\cdot_{\lambda;\,\alpha,\,\beta}b)\right)
\circ_{\lambda+\mu;\,\alpha\beta,\,\gamma}N_\gamma(c)\nonumber\\
&-N_\alpha(a)\cdot_{\lambda;\,\alpha,\,\beta\gamma}\left(N_\beta(b)\cdot_{\mu;\,\beta,\,\gamma}c
+b\cdot_{\mu;\,\beta,\,\gamma}N_\gamma(c)
-N_{\beta\gamma}(b\cdot_{\mu;\,\beta,\,\gamma}c)\right)\nonumber\\
={}& N_{\alpha\beta}\big(a \circ_{\lambda;\,\alpha,\,\beta}^{N} b\big)\cdot_{\lambda+\mu;\,\alpha\beta,\,\gamma} c-a\cdot_{\lambda;\,\alpha,\,\beta\gamma} N_{\beta\gamma}\big(b \circ_{\mu;\,\beta,\,\gamma}^{N} c\big)-N_{\alpha\beta\gamma}\big(\big(a \circ_{\lambda;\,\alpha,\,\beta}^{N} b\big)\cdot_{\lambda+\mu;\,\alpha\beta,\,\gamma} c\nonumber\\
&-a\cdot_{\lambda;\,\alpha,\,\beta\gamma}\big(b \circ_{\mu;\,\beta,\,\gamma}^{N} c\big)\big)
+\big(a\cdot_{\lambda;\,\alpha,\,\beta} N_\beta(b)\big)\cdot_{\lambda+\mu;\,\alpha\beta,\,\gamma}N_\gamma(c)
+\big(N_\alpha(a)\cdot_{\lambda;\,\alpha,\,\beta} b\big)
\cdot_{\lambda+\mu;\,\alpha\beta,\,\gamma} N_\gamma(c)\nonumber\\
&
-N_{\alpha\beta}(a\cdot_{\lambda;\,\alpha,\,\beta}b)\cdot_{\lambda+\mu;\,\alpha\beta,\,\gamma}N_\gamma(c)
-N_\alpha(a)\cdot_{\lambda;\,\alpha,\,\beta\gamma}\big(N_\beta(b)\cdot_{\mu;\,\beta,\,\gamma} c)\nonumber\\
&-N_\alpha(a)\cdot_{\lambda;\,\alpha,\,\beta\gamma}(b\cdot_{\mu;\,\beta,\,\gamma}N_\gamma(c))
+N_\alpha(a)\cdot_{\lambda;\,\alpha,\,\beta\gamma}N_{\beta\gamma}(b\cdot_{\mu;\,\beta,\,\gamma}c)\nonumber
\\
={}& N_{\alpha\beta}\big(a \circ_{\lambda;\,\alpha,\,\beta}^{N} b\big)\cdot_{\lambda+\mu;\,\alpha\beta,\,\gamma} c-a\cdot_{\lambda;\,\alpha,\,\beta\gamma} N_{\beta\gamma}\big(b \circ_{\mu;\,\beta,\,\gamma}^{N} c\big)-N_{\alpha\beta\gamma}\big(\big(a \circ_{\lambda;\,\alpha,\,\beta}^{N} b\big)\cdot_{\lambda+\mu;\,\alpha\beta,\,\gamma} c\nonumber\\
&-a\cdot_{\lambda;\,\alpha,\,\beta\gamma}\big(b \circ_{\mu;\,\beta,\,\gamma}^{N} c\big)\big)
+a\cdot_{\lambda;\,\alpha,\,\beta\gamma}\big(N_\beta(b)\cdot_{\mu;\,\beta,\,\gamma} N_\gamma(c)\big)-N_{\alpha\beta}\big(a\cdot_{\lambda;\,\alpha,\,\beta} b\big)\cdot_{\lambda+\mu;\,\alpha\beta,\,\gamma} N_\gamma(c)\nonumber\\
&-\big(N_\alpha(a)\cdot_{\lambda;\,\alpha,\,\beta} N_\beta(b)\big)\cdot_{\lambda+\mu;\,\alpha\beta,\,\gamma} c
+N_\alpha(a)\cdot_{\lambda;\,\alpha,\,\beta\gamma} N_{\beta\gamma}\big(b\cdot_{\mu;\,\beta,\,\gamma} c\big),\label{eq:asso}
\end{align}

and we also have
\begin{align}
&\big(\delta^2\varphi^{N}\big)_{\lambda,\,\mu;\,\alpha,\,\beta,\,\gamma}(a, b, c)\nonumber\\
={}& a\cdot_{\lambda;\,\alpha,\,\beta\gamma} \varphi_{\mu;\,\beta,\,\gamma}^{N}(b, c)-\varphi_{\lambda+\mu;\,\alpha\beta,\,\gamma}^{N}\big(a\cdot_{\lambda;\,\alpha,\,\beta} b, c\big)+\varphi_{\lambda;\,\alpha,\,\beta\gamma}^{N}\big(a, b\cdot_{\mu;\,\beta,\,\gamma} c\big)-\varphi_{\lambda;\,\alpha,\,\beta}^{N}(a, b)\cdot_{\lambda+\mu;\,\alpha\beta,\,\gamma} c \nonumber\\
={}& a\cdot_{\lambda;\,\alpha,\,\beta\gamma}\big(N_\beta(b)\cdot_{\mu;\,\beta,\,\gamma} N_\gamma(c)-N_{\beta\gamma}\big(b \circ_{\mu;\,\beta,\,\gamma}^{N} c\big)\big)-\big(N_{\alpha\beta}\big(a\cdot_{\lambda;\,\alpha,\,\beta} b\big)\cdot_{\lambda+\mu;\,\alpha\beta,\,\gamma} N_\gamma(c)\nonumber\\
&-N_{\alpha\beta\gamma}\big(\big(a\cdot_{\lambda;\,\alpha,\,\beta} b\big) \circ_{\lambda+\mu;\,\alpha\beta,\,\gamma}^{N} c\big)\big)
+N_\alpha(a)\cdot_{\lambda;\,\alpha,\,\beta\gamma} N_{\beta\gamma}\big(b\cdot_{\mu;\,\beta,\,\gamma} c\big)-N_{\alpha\beta\gamma}\big(a \circ_{\lambda;\,\alpha,\,\beta\gamma}^{N}\big(b\cdot_{\mu;\,\beta,\,\gamma} c\big)\big)\nonumber\\
&-\big(N_\alpha(a)\cdot_{\lambda;\,\alpha,\,\beta} N_\beta(b)
-N_{\alpha\beta}\big(a \circ_{\lambda;\,\alpha,\,\beta}^{N} b\big)\big)\cdot_{\lambda+\mu;\,\alpha\beta,\,\gamma} c \nonumber\\
={}& N_{\alpha\beta}\big(a \circ_{\lambda;\,\alpha,\,\beta}^{N} b\big)\cdot_{\lambda+\mu;\,\alpha\beta,\,\gamma} c
-a\cdot_{\lambda;\,\alpha,\,\beta\gamma} N_{\beta\gamma}\big(b \circ_{\mu;\,\beta,\,\gamma}^{N} c\big)
+a\cdot_{\lambda;\,\alpha,\,\beta\gamma}\big(N_\beta(b)\cdot_{\mu;\,\beta,\,\gamma} N_\gamma(c)\big)\nonumber\\
&-N_{\alpha\beta}\big(a\cdot_{\lambda;\,\alpha,\,\beta} b\big)\cdot_{\lambda+\mu;\,\alpha\beta,\,\gamma} N_\gamma(c)
+N_\alpha(a)\cdot_{\lambda;\,\alpha,\,\beta\gamma} N_{\beta\gamma}\big(b\cdot_{\mu;\,\beta,\,\gamma} c\big)
-\big(N_\alpha(a)\cdot_{\lambda;\,\alpha,\,\beta} N_\beta(b)\big)\cdot_{\lambda+\mu;\,\alpha\beta,\,\gamma} c\nonumber\\
&+N_{\alpha\beta\gamma}\big(\big(a\cdot_{\lambda;\,\alpha,\,\beta} b\big) \circ_{\lambda+\mu;\,\alpha\beta,\,\gamma}^{N} c
-a \circ_{\lambda;\,\alpha,\,\beta\gamma}^{N}\big(b\cdot_{\mu;\,\beta,\,\gamma} c\big)\big)\label{eq:cocy}.
\end{align}

By comparing Eq.~(\ref{eq:asso}) and Eq.~(\ref{eq:cocy}), and applying Eq.~(\ref{eq:deformation associativity}), we get
\[\big(\delta^2 \varphi^{N}\big)_{\lambda,\, \mu;\,\alpha,\,\beta,\,\gamma}(a, b, c)
=\big(a \circ_{\lambda;\,\alpha,\,\beta}^{N} b\big) \circ_{\lambda+\mu;\,\alpha,\,\beta\gamma}^{N} c- a \circ_{\lambda;\,\beta,\,\gamma}^{N}\big(b \circ_{\mu;\,\beta,\,\gamma}^{N} c\big)=0.\]
This completes the proof.
\end{proof}

\subsection{Formal deformations of Rota-Baxter family $\Omega$-associative conformal algebras}\
Let $(A, (\mu_{\lambda;\,\alpha,\,\beta})_{\alpha,\,\beta\in\Omega}, (P_\omega)_{\omega\in\Omega})$ be a Rota-Baxter family $\Omega$-associative conformal algebra of weight $q$.   Consider a 1-parameterized family: for any $\omega\in\Omega$, define
\[\mu_{\lambda,\,t;\,\alpha,\,\beta}=\sum_{i=0}^\infty \mu_{\lambda,\,i;\,\alpha,\,\beta}t^i, \ \mu_{\lambda,\,i;\,\alpha,\,\beta}\in C^2_\Alg(A),\quad  P_{t,\,\omega}=\sum_{i=0}^\infty P_{i,\,\omega}t^i,  \ P_{i,\,\omega}\in C^1_{\RBO}(A).\]

\begin{defn}
	A  1-parameter formal deformation of  Rota-Baxter family $\Omega$-associative conformal algebra $(A, (\mu_{\lambda;\,\alpha,\,\beta})_{\alpha,\,\beta\in\Omega},(P_\omega)_{\omega\in\Omega})$ is a pair $((\mu_{\lambda,\,t;\,\alpha,\,\beta})_{\alpha,\,\beta\in\Omega},(P_{t,\,\omega})_{\omega\in\Omega})$ which endows the flat $\mathbb{F}[[t]]$-module $A[[t]]$ with a  Rota-Baxter family $\Omega$-associative conformal algebra structure over $\mathbb{F}[[t]]$ such that $(\mu_{\lambda, 0;\,\alpha,\,\beta},P_{0,\,\omega})=(\mu_{\lambda;\,\alpha,\,\beta},P_\omega)$.
\end{defn}

 Power series $(\mu_{\lambda,\,t;\,\alpha,\,\beta})_{\alpha,\,\beta\in\Omega}$ and $ (P_{t,\,\omega})_{\omega\in\Omega}$ determine a  1-parameter formal deformation of Rota-Baxter family $\Omega$-associative conformal algebra $(A, (\mu_{\lambda;\,\alpha,\,\beta})_{\omega\in\Omega},(P_\omega)_{\omega\in\Omega})$ if and only if for any $a,b,c\in A$, the following equations hold:
 \begin{align*}
 \mu_{\lambda,\,t;\,\alpha,\,\beta\gamma}(a\ot \mu_{\mu,\,t;\,\beta,\,\gamma}(b\ot c))&=\mu_{\lambda+\mu,\,t;\,\alpha\beta,\,\gamma}(\mu_{\lambda,\,t;\,\alpha,\,\beta}(a\ot b)\ot c),\\
\mu_{\lambda,\, t;\,\alpha,\,\beta}(P_{t,\,\alpha}(a)\ot P_{t,\,\beta}(b))&= P_{t,\,\alpha\beta}\Big(\mu_{\lambda,\,t;\,\alpha,\,\beta}(a\ot P_{t,\,\beta}(b))+\mu_{\lambda,\,t;\,\alpha,\,\beta}(P_{t,\,\alpha}(a)\ot b)+q\mu_{\lambda,\,t;\,\alpha,\,\beta}(a\ot b)\Big).
 \end{align*}
By expanding these equations and comparing the coefficient of $t^n$, we obtain  that $(\mu_{\lambda,\,i;\,\alpha,\,\beta})_{i\geqslant0,\,\alpha,\,\beta\in\Omega}$ and $(P_{i,\,\alpha})_{i\geqslant0,\,\alpha\in\Omega}$ have to  satisfy: for any $n\geqslant 0$,
\begin{align} \sum_{i=0}^n\mu_{\lambda+\mu,i;\,\alpha\beta,\,\gamma}\circ(\mu_{\lambda,n-i;\,\alpha,\,\beta}\ot \id)={}&\sum_{i=0}^n\mu_{\lambda,i;\,\alpha,\,\beta\gamma}\circ(\id\ot \mu_{\mu,n-i;\,\beta,\,\gamma}),\label{Eq: deform eq for  products in RBA}\\
\sum_{\substack{i+j+k=n\\ i, j, k\geqslant 0}}	\mu_{\lambda,i;\,\alpha,\,\beta}\circ(P_{j,\alpha}\ot P_{k,\,\beta})={}&\sum_{\substack{i+j+k=n\\ i, j, k\geqslant 0}} P_{i,\alpha\beta}\circ \mu_{\lambda,j;\,\alpha,\,\beta}\circ (\id\ot P_{k,\,\beta})\nonumber\\
&+\sum_{\substack{i+j+k=n\\ i, j, k\geqslant 0}} P_{i,\alpha\beta}\circ\mu_{\lambda,j;\,\alpha,\,\beta}\circ (P_{k,\alpha}\ot \id)+q\sum_{\substack{i+j=n\\ i, j \geqslant 0}}P_{i,\alpha\beta}\circ\mu_{\lambda,j;\,\alpha,\,\beta}.
\label{Eq: Deform RB operator in RBA}
\end{align}
Obviously, when $n=0$, the above conditions are exactly the associativity of $\mu_{\lambda;\,\alpha,\,\beta}=\mu_{\lambda,\,0;\,\alpha,\,\beta}$ and Eq.~(\ref{Eq: Deform RB operator in RBA}) which is the defining relation of Rota-Baxter family operator $P_\omega=P_{0,\,\omega}$.


\begin{prop}\label{Prop: Infinitesimal is 2-cocyle}
	Let $(A[[t]], (\mu_{\lambda,\,t;\,\alpha,\,\beta})_{\alpha,\,\beta\in\Omega},(P_{t,\,\alpha})_{\alpha\in\Omega})$ be a  1-parameter formal deformation of
	Rota-Baxter family $\Omega$-associative conformal algebra 
of weight $q$. Then
	$((\mu_{\lambda,\,1;\,\alpha,\,\beta})_{\alpha,\,\beta\in\Omega},(P_{1,\,\alpha})_{\alpha\in\Omega})$ is a 2-cocycle in the cochain complex
	$C_{\RBA}^\bullet(A)$.
\end{prop}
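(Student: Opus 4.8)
The plan is to apply the coboundary $d^2$ of Eq.~(\ref{eq:diff}) to the infinitesimal and check that both of its components vanish. Writing the infinitesimal as the pair $(\mu_{\lambda,\,1;\,\alpha,\,\beta}, P_{1,\,\alpha}) \in C^2_\Alg(A)\oplus C^1_{\RBO}(A)$ and abbreviating the infinitesimal product by $\mu_1$, the definition of the differential yields
\[
d^2\big(\mu_1, P_{1,\,\alpha}\big)
= \Big(\delta^2\big(\mu_1\big),\ -\partial^1\big(P_{1,\,\alpha}\big) - \Phi^2\big(\mu_1\big)\Big),
\]
so it suffices to establish the two identities $\delta^2(\mu_1)=0$ and $\partial^1(P_{1,\,\alpha})+\Phi^2(\mu_1)=0$ separately. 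Each of these is the coefficient of $t^1$ in one of the two governing relations of a formal deformation, i.e.\ the $n=1$ instances of Eq.~(\ref{Eq: deform eq for  products in RBA}) and Eq.~(\ref{Eq: Deform RB operator in RBA}) respectively. Recall also that $M=A$ throughout, so $P_{M,\,\omega}=P_\omega$.

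For the first component I would specialize Eq.~(\ref{Eq: deform eq for  products in RBA}) to $n=1$, using that $\mu_{\lambda,\,0;\,\alpha,\,\beta}$ is the original $\lambda$-product. Separating the $i=0$ and $i=1$ summands and evaluating on $(a,b,c)$ turns the equation into
\[
\mu_1(a,b)\cdot_{\lambda+\mu;\,\alpha\beta,\,\gamma}c
+\mu_1(a\cdot_{\lambda;\,\alpha,\,\beta}b,c)
= a\cdot_{\lambda;\,\alpha,\,\beta\gamma}\mu_1(b,c)
+\mu_1(a,b\cdot_{\mu;\,\beta,\,\gamma}c),
\]
which, after comparison with the Hochschild formula Eq.~(\ref{2-cocycle}), is exactly $\delta^2(\mu_1)=0$. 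This step is routine rearrangement.

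For the second component I would expand the two summands separately. From the definition of $\Phi^2$, the sum over $k\in\{0,1\}$ and over subsets $\{i_1<\dots<i_k\}$ of $\{1,2\}$ produces the four terms
\[
\Phi^2(\mu_1)_{\lambda;\,\alpha,\,\beta}(a,b)=\mu_1(P_\alpha(a),P_\beta(b))-qP_{\alpha\beta}\big(\mu_1(a,b)\big)-P_{\alpha\beta}\big(\mu_1(P_\alpha(a),b)\big)-P_{\alpha\beta}\big(\mu_1(a,P_\beta(b))\big).
\]
For $\partial^1(P_1)$ I would use that $C^\bullet_{\RBO}(A)$ is the Hochschild complex of $A_\star$ with coefficients in the twisted bimodule of Proposition~\ref{Prop:new-bimodule}, so that $\partial^1(P_1)(a,b)=a\rhd P_1(b)-P_1(a\star b)+P_1(a)\lhd b$. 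Unfolding the three pieces through $a\rhd m=P_\alpha(a)\cdot m-P_{\alpha\beta}(a\cdot m)$, $m\lhd a=m\cdot P_\beta(a)-P_{\alpha\beta}(m\cdot a)$ and $a\star b=P_\alpha(a)\cdot b+a\cdot P_\beta(b)+q\,a\cdot b$ produces seven elementary monomials. Adding the two expansions, all eleven resulting monomials coincide, term by term and with matching signs, with the eleven terms obtained by transposing the $n=1$ instance of Eq.~(\ref{Eq: Deform RB operator in RBA}) to one side; hence $\partial^1(P_1)+\Phi^2(\mu_1)=0$.

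The main obstacle is precisely this second computation: correctly unfolding $\partial^1(P_1)$ through the three nested operations $\star$, $\rhd$ and $\lhd$ — each of which contributes its own $P$-twist, its $P_M$-correction and (for $\star$) a weight-$q$ term — and then matching the resulting monomials one-for-one against the infinitesimal Rota-Baxter relation. The algebra is mechanical, but keeping the signs together with the conformal parameters $(\lambda,\mu)$ and the semigroup indices $(\alpha,\beta,\gamma)$ consistent across the $\Phi^2$ and $\partial^1$ expansions is where the care is needed; once $P_M=P$ is used and the two sides are laid out, their agreement becomes visible.
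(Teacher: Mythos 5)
Your proposal is correct and follows essentially the same route as the paper: both extract the coefficient of $t^1$ from the two defining relations of a formal deformation and identify the resulting identities with $\delta^2(\mu_1)=0$ and $\Phi^2(\mu_1)=-\partial^1(P_1)$, i.e.\ with the vanishing of the two components of $d^2(\mu_1,P_1)$. Your expansions of $\Phi^2$ (four terms) and of $\partial^1(P_1)$ through $\rhd$, $\lhd$, $\star$ (seven terms) agree with the two sides of the paper's displayed $n=1$ equation, so the matching of the eleven monomials goes through as you describe.
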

\begin{proof} When $n=1$,   Eqs.~(\ref{Eq: deform eq for  products in RBA}) and (\ref{Eq: Deform RB operator in RBA})  become
	$$\mu_{\lambda+\mu,1;\,\alpha\beta,\,\gamma}\circ(\mu_{\mu;\,\alpha,\,\beta}\ot \id)+\mu_{\lambda+\mu;\,\alpha\beta,\,\gamma}\circ(\mu_{\mu,1;\alpha,\,\beta}\ot \id)=\mu_{\lambda,1;\,\alpha,\,\beta\gamma}\circ(\id\ot \mu_{\mu;\,\beta,\,\gamma})+\mu_{\lambda;\,\alpha,\,\beta\gamma}\circ (\id\ot \mu_{\mu,1;\,\beta,\,\gamma}),$$
and
\begin{align*}
	&\mu_{\lambda,1;\,\alpha,\,\beta} (P_\alpha\ot P_\beta)-\{P_{\alpha\beta}\circ\mu_{\lambda,1;\,\alpha,\,\beta}\circ(\id\ot P_\beta)+P_{\alpha\beta}\circ\mu_{\lambda,1;\,\alpha,\,\beta}\circ(P_\alpha\ot \id)+q P_{\alpha\beta}\circ \mu_{\lambda,1;\,\alpha,\,\beta}\}\\
	={}&-\{\mu_{\lambda;\,\alpha,\,\beta}\circ(P_\alpha\ot P_{1,\,\beta})-P_{\alpha\beta}\circ\mu_{\lambda;\,\alpha,\,\beta}\circ(\id\ot P_{1,\,\beta})\}\\
&-\{\mu_{\lambda;\,\alpha,\,\beta}\circ(P_{1,\,\alpha}\ot P_\beta)-P_{\alpha\beta}\circ\mu_{\lambda;\,\alpha,\,\beta}\circ(P_{1,\,\alpha}\ot\id)\}\\
	&+\{P_{1,\alpha\beta}\circ\mu_{\lambda;\,\alpha,\,\beta}\circ(\id\ot P_\beta)+P_{1,\alpha\beta}\circ\mu_{\lambda;\,\alpha,\,\beta}\circ(P_\alpha\ot \id)+q P_{1,\alpha\beta}\circ \mu_{\lambda;\,\alpha,\,\beta}\}.
	\end{align*}
Note that  the first equation is exactly $\delta^2(\mu_1)_{\lambda,\,\mu;\,\alpha,\,\beta,\,\gamma}=0\in C^\bullet_{\Alg}(A)$ and that  second equation is exactly to  \[\Phi^2(u_1)_{\lambda;\,\alpha,\,\beta}=-\partial^1(p_{1})_{\lambda;\,\alpha,\,\beta} \in C^\bullet_{\RBO}(A).\]
	So $((\mu_{\lambda,\,1;\,\alpha,\,\beta})_{\alpha,\,\beta\in\Omega},(P_{1,\,\alpha})_{\alpha\in\Omega})$ is a 2-cocycle in $C^\bullet_{\RBA}(A)$.
	\end{proof}


\begin{defn} The 2-cocycle $((\mu_{\lambda,\,1;\,\alpha,\,\beta})_{\alpha,\,\beta\in\Omega},(P_{1,\,\omega})_{\omega\in\Omega})$ is called {\bf the infinitesimal of the 1-parameter formal deformation} $(A[[t]], (\mu_{\lambda,\,t;\,\alpha,\,\beta})_{\alpha,\,\beta\in\Omega}, (P_{t,\,\omega})_{\omega\in\Omega})$ of Rota-Baxter family $\Omega$-associative conformal algebra $(A, (\mu_{\lambda;\,\alpha,\,\beta})_{\alpha,\,\beta\in\Omega},(P_\omega)_{\omega\in\Omega})$.
\end{defn}

\begin{defn}
 We call the two 1-parameter formal deformations $(A[[t]], (\mu_{\lambda,t;\,\alpha,\,\beta})_{\alpha,\,\beta\in\Omega},(P_{t,\,\omega})_{\omega\in\Omega})$ and
	$(A[[t]],$ $(\mu'_{\lambda,t;\,\alpha,\,\beta})_{\alpha,\,\beta\in\Omega},(P'_{t,\,\omega})_{\omega\in\Omega})$ are  {\bf equivalent}
 if there exists a power series formal isomorphism
 \[
 \psi_{t;\,\omega}=\sum_{i=0}\psi_{i;\,\omega}t^i: A[[t]]\rightarrow A[[t]],\]
  where $(\psi_{i;\,\omega})_{\omega\in\Omega}: A\rightarrow A$ are linear maps with $\psi_0=\id_A$, such that:
\begin{eqnarray}\label{Eq: equivalent deformations}
\psi_{t;\,\alpha\beta}\circ \mu_{\lambda,t;\,\alpha,\,\beta}' &=& \mu_{\lambda,t;\,\alpha,\,\beta}\circ (\psi_{t;\,\alpha}\ot \psi_{t;\,\beta}),\\
\psi_{t;\,\omega}\circ P_{t,\,\omega}'&=&P_{t,\,\omega}\circ\psi_{t,\,\omega}. \label{Eq: equivalent deformations2}
	\end{eqnarray}
	
\end{defn}


Given a Rota-Baxter family $\Omega$-associative conformal algebra $(A, (\mu_{\lambda;\,\alpha,\,\beta})_{\alpha,\,\beta\in\Omega},(P_\omega)_{\omega\in\Omega})$, the power series $(\mu_{\lambda,\,t;\,\alpha,\,\beta})_{\alpha,\,\beta\in\Omega}$ and $(P_{t,\,\omega})_{\omega\in\Omega}$
with $\mu_{\lambda,\,i;\,\alpha,\,\beta}=\delta_{i,0}\mu_{\lambda;\,\alpha,\,\beta},\, P_{i,\omega}$ $=\delta_{i,0}P_\omega$ make
$(A[[t]], (\mu_{\lambda,\,t;\,\alpha,\,\beta})$ $_{\alpha,\,\beta\in\Omega},$ $(P_{t,\,\omega})_{\omega\in\Omega})$ into a $1$-parameter formal deformation of
$(A, (\mu_{\lambda;\,\alpha,\,\beta})$ $_{\alpha,\,\beta\in\Omega},(P_\omega)_{\omega\in\Omega})$. Formal deformations equivalent to this one are called {\bf trivial}.

\begin{theorem}
The infinitesimals of two equivalent 1-parameter formal deformations of $(A, (\mu_{\lambda;\,\alpha,\,\beta})$ $_{\alpha,\,\beta\in\Omega},$ $(P_\omega)_{\omega\in\Omega})$ are in the same cohomology class in $\rmH^\bullet_{\RBA}(A)$.
\end{theorem}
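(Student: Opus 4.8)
The plan is to expand the two equivalence conditions Eq.~(\ref{Eq: equivalent deformations}) and Eq.~(\ref{Eq: equivalent deformations2}) to first order in $t$ and to recognize the difference of the two infinitesimals as the image under $d^1$ of a single $1$-cochain assembled from the linear coefficient $\psi_1=(\psi_{1;\,\omega})_{\omega\in\Omega}$ of the intertwining isomorphism $\psi_{t;\,\omega}=\sum_{i\geqslant 0}\psi_{i;\,\omega}t^i$ (with $\psi_{0;\,\omega}=\id_A$). Since each $\psi_{i;\,\omega}$ is $\mathbb{F}[\partial]$-linear, we have $\psi_1\in C^1_\Alg(A)$.

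First I would compare the coefficients of $t^1$ in Eq.~(\ref{Eq: equivalent deformations}). Using $\psi_0=\id$ and $\mu_{\lambda,\,0;\,\alpha,\,\beta}=\mu'_{\lambda,\,0;\,\alpha,\,\beta}=\mu_{\lambda;\,\alpha,\,\beta}$, this produces
\[\mu'_{\lambda,\,1;\,\alpha,\,\beta}+\psi_{1;\,\alpha\beta}\circ\mu_{\lambda;\,\alpha,\,\beta}
=\mu_{\lambda,\,1;\,\alpha,\,\beta}+\mu_{\lambda;\,\alpha,\,\beta}\circ(\psi_{1;\,\alpha}\ot\id)+\mu_{\lambda;\,\alpha,\,\beta}\circ(\id\ot\psi_{1;\,\beta}).\]
Rearranging and recalling the formula for the Hochschild differential $\delta^1$ from Eq.~(\ref{1-cocycle}), this is precisely
\[\mu_{\lambda,\,1;\,\alpha,\,\beta}-\mu'_{\lambda,\,1;\,\alpha,\,\beta}=\delta^1(-\psi_1)_{\lambda;\,\alpha,\,\beta}.\]

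Next I would do the same with the operator condition Eq.~(\ref{Eq: equivalent deformations2}). Its coefficient of $t^1$ reads $P'_{1,\,\omega}+\psi_{1;\,\omega}\circ P_\omega=P_\omega\circ\psi_{1;\,\omega}+P_{1,\,\omega}$, that is,
\[P_{1,\,\omega}-P'_{1,\,\omega}=\psi_{1;\,\omega}\circ P_\omega-P_\omega\circ\psi_{1;\,\omega}.\]
On the regular bimodule $M=A$ we have $P_{M,\,\omega}=P_\omega$, so the defining formula for $\Phi^1$ gives $\Phi^1(-\psi_1)_\omega=-\psi_{1;\,\omega}\circ P_\omega+P_\omega\circ\psi_{1;\,\omega}$, whence $-\Phi^1(-\psi_1)_\omega=P_{1,\,\omega}-P'_{1,\,\omega}$.

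Finally I would assemble these two identities via the total differential $d^1$ of Eq.~(\ref{eq:diff}). Taking the $1$-cochain $(-\psi_1,0)\in C^1_\Alg(A)\oplus C^0_{\RBO}(A)=C^1_{\RBA}(A)$ and using $\partial^0(0)=0$, one obtains
\[d^1(-\psi_1,0)=\big(\delta^1(-\psi_1),\,-\partial^0(0)-\Phi^1(-\psi_1)\big)=\big(\mu_{\lambda,\,1;\,\alpha,\,\beta}-\mu'_{\lambda,\,1;\,\alpha,\,\beta},\,P_{1,\,\omega}-P'_{1,\,\omega}\big).\]
Hence the difference of the two infinitesimals is a coboundary in $C^\bullet_{\RBA}(A)$, and the two infinitesimals represent the same class in $\rmH^\bullet_{\RBA}(A)$. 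The computation is first-order and essentially mechanical; the only point requiring care is bookkeeping---keeping the $\lambda$-arguments and the $\Omega$-indices aligned while matching the linear terms against $\delta^1$ and $\Phi^1$, and fixing the global sign so that the single cochain $-\psi_1$ reproduces both components of the difference simultaneously.
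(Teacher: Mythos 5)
Your proposal is correct and follows essentially the same route as the paper: expand the two equivalence identities to first order in $t$, identify the two resulting relations with $\delta^1$ and $\Phi^1$ applied to the linear coefficient $\psi_1$ of the intertwiner, and package them as $d^1$ of a $1$-cochain. The only cosmetic difference is that the paper takes the difference $(\mu'_1,P'_1)-(\mu_1,P_1)=d^1(\psi_1)$ whereas you take the opposite difference and use $-\psi_1$; both yield the same cohomology class.
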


\begin{proof} Let $(\psi_{t;\,\alpha,\,\beta})_{\alpha,\,\beta\in\Omega}:(A[[t]], (\mu_{\lambda,\,t';\,\alpha,\,\beta})_{\alpha,\,\beta\in\Omega}, (P'_{t,\,\omega})_{\omega\in\Omega})\rightarrow (A[[t]], (\mu_{\lambda,\,t;\,\alpha,\,\beta})_{\alpha,\,\beta\in\Omega}, (P_{t,\omega})_{\omega\in\Omega})$ be a formal isomorphism.
	Expanding the identities and collecting coefficients of $t$, from Eqs.~(\ref{Eq: equivalent deformations}) and (\ref{Eq: equivalent deformations2}), we have:
	\begin{eqnarray*}
		\mu_{\lambda,1;\,\alpha,\,\beta}'&=&\mu_{\lambda,1;\,\alpha,\,\beta}+\mu_{\lambda;\,\alpha,\,\beta}\circ(\id\ot \psi_{1;\,\beta})-\psi_{1;\,\alpha\beta}\circ\mu_{\lambda;\,\alpha,\,\beta}+\mu_{\lambda;\,\alpha,\,\beta}
\circ(\psi_{1;\,\alpha}\ot \id),\\
		P'_{1,\,\omega}&=&P_{1,\,\omega}+P_\omega\circ\psi_{1;\,\,\omega}-\psi_{1;\,\omega}\circ P_\omega,
		\end{eqnarray*}
	that is, we have
\[
(\mu'_{\lambda,1;\,\alpha,\,\beta},P'_{1,\,\omega})-
(\mu_{\lambda,1;\,\alpha,\,\beta},P_{1,\,\omega})
=(\delta^1(\psi_1)_{\lambda;\,\alpha,\,\beta}, -\Phi^1(\psi_1)_{\omega})=d^1(\psi_{1})_{\lambda;\,\alpha,\,\beta,\,\omega}\in  C^\bullet_{\RBA}(A).\qedhere\]
\end{proof}

\section{Abelian extensions of Rota-Baxter family $\Omega$-associative conformal algebras}
\label{sec:abelian extension}

 In this section, we study abelian extensions of Rota-Baxter family $\Omega$-associative conformal algebras and show that they are classified by the second cohomology, as one would expect of a good cohomology theory.

 Notice that a vector space $M$ together with a family of linear transformations $(P_{M,\,\omega})_{\omega\in\Omega}$ with $P_{M,\,\omega}:M\to M$ is naturally a Rota-Baxter family $\Omega$-associative conformal algebras where the multiplication on $M$ is defined to be $u\Cdot\lambda,\alpha,\,\beta;v=0$ for all $u,v\in M$ and $\alpha,\,\beta\in\Omega.$

 \begin{defn}
 	{\bf An  abelian extension} of Rota-Baxter family $\Omega$-associative conformal algebras is a short exact sequence of  morphisms of Rota-Baxter family $\Omega$-associative conformal algebras
 \begin{eqnarray}\label{Eq: abelian extension}
 0\to (M, (P_{M,\,\omega})_{\omega\in\Omega})\xrightarrow{(i_\omega)_{\omega\in\Omega}} (\hat{A}, (\hat{P}_\omega)_{\omega\in\Omega})\xrightarrow{(p_\omega)_{\omega\in\Omega}} (A, (P_\omega)_{\omega\in\Omega})\to 0,
 \end{eqnarray}
 that is, there exists a commutative diagram:
 	\[\begin{CD}
 		0@>>> {M} @>i_\omega >> \hat{A} @>p_\omega >> A @>>>0\\
 		@. @V {P_{M,\,\omega}} VV @V {\hat{P}_\omega} VV @V P_\omega VV @.\\
 		0@>>> {M} @>i_\omega >> \hat{A} @>p_\omega >> A @>>>0,
 	\end{CD}\]
 where the Rota-Baxter family $\Omega$-associative conformal algebra $(M, (P_{M,\,\omega})_{\omega\in\Omega})$	satisfies  $u\Cdot\lambda,\alpha,\,\beta;v=0$ for all $u,v\in M.$

 We will call $(\hat{A},(\hat{P}_\omega)_{\omega\in\Omega})$ an abelian extension of $(A,(P_\omega)_{\omega\in\Omega})$ by $(M,(P_{M,\,\omega})_{\omega\in\Omega})$.
 \end{defn}

 \begin{defn}
 	Let $(\hat{A}_1,(\hat{P}^1_{\omega})_{\omega\in\Omega})$ and $(\hat{A}_2,(\hat{P}^2_{\omega})_{\omega\in\Omega})$ be two abelian extensions of $(A,(P_\omega)_{\omega\in\Omega})$ by $(M, (P_{M,\,\omega})_{\omega\in\Omega})$. They are said to be {\bf isomorphic}  if there exists an isomorphism of Rota-Baxter family $\Omega$-asssociative conformal algebras $(\zeta_\omega)_{\omega\in\Omega}:(\hat{A}_1,(\hat{P}^1_{\omega})_{\omega\in\Omega})\rar (\hat{A}_2,(\hat{P}^2_{\omega})_{\omega\in\Omega})$ such that the following commutative diagram holds:
 	\begin{eqnarray}\label{Eq: isom of abelian extension}\begin{CD}
 		0@>>> {(M,T_M)} @>i_\omega >> (\hat{A}_1,(\hat{P}^1_{\omega})_{\omega\in\Omega}) @>p^1_\omega >> (A,(P_\omega)_{\omega\in\Omega}) @>>>0\\
 		@. @| @V \zeta_\omega VV @| @.\\
 		0@>>> {(M,T_M)} @>i_\omega >> (\hat{A}_2,(\hat{P}^2_{\omega})_{\omega\in\Omega}) @>p^2_\omega >> (A,(P_\omega)_{\omega\in\Omega}) @>>>0.
 	\end{CD}\end{eqnarray}
 \end{defn}

 A   section of an abelian extension $(\hat{A},(\hat{P}_\omega)_{\omega\in\Omega})$ of $(A,(\hat{P}_\omega)_{\omega\in\Omega})$ by $(M,(P_{M,\,\omega})_{\omega\in\Omega})$ is a linear map $s_\omega:A\rar \hat{A}$ such that $p_\omega\circ s_\omega=\Id_A$.

 We will show that isomorphism classes of  abelian extensions of $(A,(\hat{P}_\omega)_{\omega\in\Omega})$ by $(M,(P_{M,\,\omega})_{\omega\in\Omega})$ are in bijection with the second cohomology group   ${\rmH}_{\RBA}^2(A,M)$.


Let  $(\hat{A},(\hat{P}_\omega)_{\omega\in\Omega})$ be  an abelian extension of $(A,(P_\omega)_{\omega\in\Omega})$ by $(M,(P_{M,\,\omega})_{\omega\in\Omega})$ having the form Eq.~\eqref{Eq: abelian extension}.

 \begin{prop}\label{Prop: new RB bimodules from abelian extensions}
 	Let $(s_\omega)_{\omega\in\Omega}:A\rar \hat{A}$ be a family of sections . Define
 \[
 a\Cdot\lambda,\alpha,\beta;m:=s_\alpha(a)\Cdot\lambda,\alpha,\beta;m,\quad m\Cdot\lambda,\alpha,\beta;a:=m\Cdot\lambda,\alpha,\beta;s_\beta(a), \quad \text{ for }\, a\in A, m\in M,\,\alpha,\,\beta\in\Omega.\]
 Then $(M, (P_{M,\,\omega})_{\omega\in\Omega})$ is a Rota-Baxter family $\Omega$-associative conformal bimodule on $(A, (\mu_{\lambda;\,\alpha,\,\beta})_{\alpha,\,\beta\in\Omega},$ $\,(P_{\omega})_{\omega\in\Omega})$.
 \end{prop}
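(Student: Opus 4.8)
The plan is to carry out every verification inside the ambient algebra $\hat{A}$, exploiting the fact that $M$, identified with the ideal $i_\omega(M)=\ker p_\omega$ of $\hat A$, carries the trivial multiplication $u\Cdot\lambda,\alpha,\beta; v=0$. First I would record the structural facts coming from Eq.~\eqref{Eq: abelian extension}: each $p_\omega$ is an $\mathbb{F}[\partial]$-linear algebra morphism with $p_\omega\circ\hat P_\omega=P_\omega\circ p_\omega$, the injection identifies $P_{M,\,\omega}$ with the restriction $\hat P_\omega|_M$, and $M$ is a two-sided ideal (if $p_\beta(y)=0$ then $p_{\alpha\beta}(x\Cdot\lambda,\alpha,\beta; y)=0$). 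The single lemma driving everything is this: if $x,x'\in\hat A$ satisfy $p_\omega(x)=p_\omega(x')$ then $x-x'\in M$, so for every $m\in M$ one has $x\Cdot\lambda,\alpha,\beta; m=x'\Cdot\lambda,\alpha,\beta; m$ and $m\Cdot\lambda,\alpha,\beta; x=m\Cdot\lambda,\alpha,\beta; x'$, the difference being a product of two $M$-elements and hence zero. In particular the two actions are independent of the chosen section, and whenever a product of lifted elements $s_\alpha(a)\Cdot\lambda,\alpha,\beta; s_\beta(b)$ is later multiplied into $M$ it may be replaced by $s_{\alpha\beta}(a\Cdot\lambda,\alpha,\beta; b)$, since both are lifts of $a\Cdot\lambda,\alpha,\beta; b$.

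With this lemma the conformal sesquilinearity of the actions is immediate: $s_\alpha(\partial a)$ and $\partial s_\alpha(a)$ are two lifts of $\partial a$, so they agree against $m$, and the $\partial$-rules then descend from those of $\hat A$, while the right-hand variable causes no trouble because $i_\omega$ is $\mathbb{F}[\partial]$-linear. Next I would verify the three bimodule axioms of Definition~\ref{defn:bimodule}. Each follows by lifting every entry through a section, applying the associativity of $\hat A$, and collapsing the unique internal product of two lifted elements back to a single lift via the lemma, the error term landing in $M$ and being annihilated by the remaining $M$-factor. For instance $(a\Cdot\lambda,\alpha,\beta; b)\Cdot\lambda+\mu,\alpha\beta,\gamma; m = s_{\alpha\beta}(a\Cdot\lambda,\alpha,\beta; b)\Cdot\lambda+\mu,\alpha\beta,\gamma; m = (s_\alpha(a)\Cdot\lambda,\alpha,\beta; s_\beta(b))\Cdot\lambda+\mu,\alpha\beta,\gamma; m$, which by associativity in $\hat A$ equals $s_\alpha(a)\Cdot\lambda,\alpha,\beta\gamma;(s_\beta(b)\Cdot\mu,\beta,\gamma; m)=a\Cdot\lambda,\alpha,\beta\gamma;(b\Cdot\mu,\beta,\gamma; m)$; the mixed and right axioms are identical in spirit.

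Finally I would establish the two Rota-Baxter bimodule relations Eqs.~\eqref{eq:left} and \eqref{eq:right} by feeding the defining Rota-Baxter identity of $\hat A$ the pair $x=s_\alpha(a),\,y=m$ for \eqref{eq:left} and $x=m,\,y=s_\beta(a)$ for \eqref{eq:right}. On the left one meets $\hat P_\alpha(s_\alpha(a))$, which is merely some lift of $P_\alpha(a)$; by the lemma it may be replaced by $s_\alpha(P_\alpha(a))$ whenever it multiplies an $M$-element, turning $\hat P_\alpha(s_\alpha(a))\Cdot\lambda,\alpha,\beta; m$ and $\hat P_\alpha(s_\alpha(a))\Cdot\lambda,\alpha,\beta; P_{M,\,\beta}(m)$ into $P_\alpha(a)\Cdot\lambda,\alpha,\beta; m$ and $P_\alpha(a)\Cdot\lambda,\alpha,\beta; P_{M,\,\beta}(m)$. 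Since $\hat P_\beta(m)=P_{M,\,\beta}(m)$ and every inner product lies in $M$ (apply $p_{\alpha\beta}$ and use $p_\beta(m)=0$), the outer $\hat P_{\alpha\beta}$ acts as $P_{M,\,\alpha\beta}$; assembling these substitutions turns the $\hat A$-identity verbatim into Eq.~\eqref{eq:left}, and the symmetric computation yields Eq.~\eqref{eq:right}.

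The main obstacle is not any single computation but the disciplined bookkeeping of the discrepancy elements: the section $s_\omega$ is neither required to be $\mathbb{F}[\partial]$-linear nor to intertwine $P_\omega$ with $\hat P_\omega$, so at several points $s_\omega(\partial a)$, $s_{\alpha\beta}(a\Cdot\lambda,\alpha,\beta; b)$ and $s_\alpha(P_\alpha(a))$ must be compared with the naive lifts $\partial s_\alpha(a)$, $s_\alpha(a)\Cdot\lambda,\alpha,\beta; s_\beta(b)$ and $\hat P_\alpha(s_\alpha(a))$. The point to check each time is that the discrepancy lies in $M$ and is subsequently paired with another $M$-factor, so that the triviality of the multiplication on $M$ makes it vanish; once this pattern is isolated, every required identity reduces mechanically to the corresponding axiom in $\hat A$.
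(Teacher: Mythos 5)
Your proposal is correct and follows essentially the same route as the paper's proof: both rest on the single observation that two lifts of the same element of $A$ (or of $P_\alpha(a)$) differ by an element of $M$, whose product with any element of $M$ vanishes, so that the associativity and Rota--Baxter identities of $\hat{A}$ descend verbatim to the induced actions. Your write-up merely makes explicit some routine points (sesquilinearity, independence of the section) that the paper leaves implicit.
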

 \begin{proof}
 	For any $a,b\in A,\,m\in M$, since $s_{\alpha\beta}(a\Cdot\lambda,\alpha,\beta;b)-s_\alpha(a)\Cdot\lambda,\alpha,\beta;s_\beta(b)\in M$ implies \[s_{\alpha\beta}(a\Cdot\lambda,\alpha,\beta;b)\Cdot\lambda+\mu,\alpha\beta,\gamma;m
 =s_\alpha(a)\Cdot\lambda,\alpha,\beta\gamma;
 \left(s_\beta(b)
\Cdot\mu,\beta,\gamma;m\right),\]
 we have
 	\[ (a\Cdot\lambda,\alpha,\beta;b) \Cdot\lambda+\mu,\alpha\beta,\gamma; m=s_{\alpha\beta}(a\Cdot\lambda,\alpha,\beta;b)\Cdot\lambda+\mu,\alpha\beta,\gamma;m
 =\left(s_\alpha(a)\Cdot\lambda,\alpha,\beta;s_\beta(b)\right)\Cdot\lambda+\mu,\alpha,\beta;m
 =a\Cdot\lambda,\alpha,\beta\gamma;(b\Cdot\mu,\,\beta,\gamma;m).\]
 	Hence,  this gives a left $A$-module structure and the case of right module structure is similar.

 Moreover, ${\hat{P}_\omega}(s_\omega(a))-s_\omega(P_\omega(a))\in M$ means that  \[{\hat{P}_\omega}(s_\omega(a))\Cdot\lambda,\alpha,\beta;m=s_\omega(P_\omega(a))\Cdot\lambda,\alpha,\beta;m.\]

  Thus we have
 	\begin{align*}
 		P_\alpha(a)\Cdot\lambda,\alpha,\beta;P_{M,\,\beta}(m)&=s_\omega(P_\alpha(a))\Cdot\lambda,\alpha,\beta;
 P_{M,\,\beta}(m)\\
 		&=\hat{P}_\alpha(s_\omega(a))\Cdot\lambda,\alpha,\beta;P_{M,\,\beta}(m)\\
 		&=\hat{P}_{\alpha\beta}(\hat{P}_\alpha(s_\omega(a))\Cdot\lambda,\alpha,\beta;m
 +s_\omega(a)\Cdot\lambda,\alpha,\beta;
 P_{M,\,\beta}(m)+q s_\omega(a)\Cdot\lambda,\alpha,\beta;m)\\
 		&=P_{M,\,\alpha\beta}(P_\alpha(a)\Cdot\lambda,\alpha,\beta;m+a\Cdot\lambda,\alpha,\beta;P_{M,\,\beta}(m)
 +q a\Cdot\lambda,\alpha,\beta;m).
 	\end{align*}
 	It is similar to see \[P_{M,\,\alpha}(m)\Cdot\lambda,\alpha,\beta;P_\beta(a)
 =P_{M,\,\alpha\beta}(P_{M,\,\alpha}(m)\Cdot\lambda,\alpha,\beta;a+m\Cdot\lambda,\alpha,\beta;P_\beta(a)+q m\Cdot\lambda,\alpha,\beta;a).\]
This completes the proof.
 \end{proof}

 We further  define conformal linear maps $(\psi_{\lambda;\,\alpha,\,\beta})_{\alpha,\,\beta\in\Omega}:A\ot A\rar M$ and $(\chi_\omega)_{\omega\in\Omega}:A\rar M$ respectively by
 \begin{align*}
 	\psi_{\lambda;\,\alpha,\,\beta}(a, b)&=s_\alpha(a)\Cdot\lambda,\alpha,\beta;s_\beta(b)-s_{\alpha\beta}
 (a\Cdot\lambda,\alpha,\beta;b),\quad\text{ for }\, a,b\in A,\,\alpha,\beta\in\Omega,\\
 	\chi_\omega(a)&={\hat{P}_\omega}(s_\omega(a))-s_\omega(P_\omega(a)),\quad
 \text{ for }\, a\in A, \omega\in\Omega.
 \end{align*}

 \begin{prop}
 	 Let $\Omega$ be a semigroup. The pair
 	$((\psi_{\lambda;\,\alpha,\,\beta})_{\alpha,\,\beta\in\Omega}, (\chi_\omega)_{\omega\in\Omega})$ is a 2-cocycle  of   Rota-Baxter family $\Omega$-associative conformal algebra $(A,(P_\omega)_{\omega\in\Omega})$ with  coefficients  in the Rota-Baxter family $\Omega$-associative conformal bimodule $(M,(P_{M,\,\omega})_{\omega\in\Omega})$ introduced in Proposition~\ref{Prop: new RB bimodules from abelian extensions}.
 \end{prop}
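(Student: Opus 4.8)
The plan is to use the identification $C^2_{\RBA}(A,M)=C^2_\Alg(A,M)\oplus C^1_{\RBO}(A,M)$ together with the formula \eqref{eq:diff}, which reads $d^2(\psi,\chi)=\big(\delta^2(\psi),\,-\partial^1(\chi)-\Phi^2(\psi)\big)$. Hence it suffices to establish the two vanishings $\delta^2(\psi)=0$ and $\partial^1(\chi)+\Phi^2(\psi)=0$. Before that I would record that $(\psi_{\lambda;\,\alpha,\,\beta})$ and $(\chi_\omega)$ are genuinely $M$-valued: since each $p_\omega$ is a morphism of Rota-Baxter family $\Omega$-associative conformal algebras, applying $p$ to the defining expressions of $\psi$ and $\chi$ yields $0$, so both land in $\ker p=M$ (choosing the sections $\mathbb{F}[\partial]$-linear makes them conformal linear). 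The relevant bimodule structure is the one of Proposition~\ref{Prop: new RB bimodules from abelian extensions}, where the actions are $a\Cdot\lambda,\alpha,\beta; m=s_\alpha(a)\Cdot\lambda,\alpha,\beta; m$ and $m\Cdot\lambda,\alpha,\beta; a=m\Cdot\lambda,\alpha,\beta; s_\beta(a)$.

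For the first vanishing I would expand $(\delta^2\psi)_{\lambda,\mu;\,\alpha,\,\beta,\,\gamma}(a,b,c)$ via \eqref{2-cocycle}, substitute $\psi_{\lambda;\,\alpha,\,\beta}(a,b)=s_\alpha(a)\Cdot\lambda,\alpha,\beta; s_\beta(b)-s_{\alpha\beta}(a\Cdot\lambda,\alpha,\beta; b)$, and rewrite each module action through the section. The eight resulting terms cancel in four pairs: the two occurrences of $s_\alpha(a)\Cdot\lambda,\alpha,\beta\gamma; s_{\beta\gamma}(b\Cdot\mu,\beta,\gamma; c)$ cancel, as do the two of $s_{\alpha\beta}(a\Cdot\lambda,\alpha,\beta; b)\Cdot\lambda+\mu,\alpha\beta,\gamma; s_\gamma(c)$; the two $s_{\alpha\beta\gamma}(\cdots)$ terms cancel by associativity of $A$; and the remaining two cancel by associativity of $\hat A$. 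This is the usual ``$s$ fails to be an algebra map by a Hochschild $2$-cocycle'' computation, requiring no new input.

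The main work is the second vanishing, and here the cleanest route is to transport the structure of $\hat A$ to $A\oplus M$ along the section. Writing each element of $\hat A$ uniquely as $s_\omega(a)+m$ with $m\in M$, and using that $M$ is an ideal with $M\Cdot\lambda,\alpha,\beta; M=0$, the multiplication and operator become
\[(a,m)\bullet_\lambda(b,n)=\big(a\Cdot\lambda,\alpha,\beta; b,\ \psi_{\lambda;\,\alpha,\,\beta}(a,b)+a\Cdot\lambda,\alpha,\beta; n+m\Cdot\lambda,\alpha,\beta; b\big),\qquad \hat P_\omega(a,m)=\big(P_\omega(a),\ \chi_\omega(a)+P_{M,\,\omega}(m)\big).\]
Imposing the defining Rota-Baxter family identity for $\hat P$ on the pair $(a,0),(b,0)$ and reading off components, the $A$-component reproduces the Rota-Baxter identity of $A$ (automatic), while the $M$-component is exactly
\[\psi_{\lambda;\,\alpha,\,\beta}(P_\alpha(a),P_\beta(b))+P_\alpha(a)\Cdot\lambda,\alpha,\beta;\chi_\beta(b)+\chi_\alpha(a)\Cdot\lambda,\alpha,\beta; P_\beta(b)=\chi_{\alpha\beta}(w)+P_{M,\,\alpha\beta}(m_0),\]
where $w=a\Star\lambda,\alpha,\beta; b$ is the star product of Proposition~\ref{Prop: new RBF algebra} and $m_0=\psi_{\lambda;\,\alpha,\,\beta}(P_\alpha(a),b)+\psi_{\lambda;\,\alpha,\,\beta}(a,P_\beta(b))+q\,\psi_{\lambda;\,\alpha,\,\beta}(a,b)+\chi_\alpha(a)\Cdot\lambda,\alpha,\beta; b+a\Cdot\lambda,\alpha,\beta;\chi_\beta(b)$.

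To finish I would expand $\Phi^2(\psi)_{\lambda;\,\alpha,\,\beta}(a,b)$ and $\partial^1(\chi)_{\lambda;\,\alpha,\,\beta}(a,b)$ from their definitions and add them, using that $\partial^1$ is the Hochschild differential of $A_\star$ with the $\rhd,\lhd$-actions of \eqref{eq:left}--\eqref{eq:right}. The crucial bookkeeping observation is that the terms landing \emph{inside} the single surviving $P_{M,\,\alpha\beta}(\,\cdot\,)$ assemble to precisely $m_0$ (the three $\psi$-terms coming from $\Phi^2(\psi)$, the two action terms $\chi_\alpha(a)\Cdot\lambda,\alpha,\beta; b$ and $a\Cdot\lambda,\alpha,\beta;\chi_\beta(b)$ from $\partial^1(\chi)$), while the terms lying \emph{outside} are $\psi_{\lambda;\,\alpha,\,\beta}(P_\alpha(a),P_\beta(b))+P_\alpha(a)\Cdot\lambda,\alpha,\beta;\chi_\beta(b)+\chi_\alpha(a)\Cdot\lambda,\alpha,\beta; P_\beta(b)-\chi_{\alpha\beta}(w)$. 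Substituting the $M$-component identity into this outside group replaces the first three summands by $\chi_{\alpha\beta}(w)+P_{M,\,\alpha\beta}(m_0)$, and the total collapses to $0$. The step I expect to be the main obstacle is exactly this matching: one must keep the reorganization between the ``inside-$P_M$'' and ``outside-$P_M$'' terms under tight control, track the $\star$-twisted middle term $\chi_{\alpha\beta}(w)$ and the operator-twisted actions consistently, and make sure the nested $P_{M,\,\alpha\beta}$'s line up; carrying the transported model $A\oplus M$ throughout makes every term traceable and removes any sign or bracket ambiguity.
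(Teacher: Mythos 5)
Your proposal is correct and fills in exactly the ``direct computation'' that the paper leaves to the reader: the vanishing $\delta^2(\psi)=0$ is the standard cocycle-from-a-section cancellation via associativity of $\hat A$ and of $A$, and the identity $\partial^1(\chi)+\Phi^2(\psi)=0$ is precisely the $M$-component of the Rota--Baxter family relation for $\hat P$ evaluated on $s_\alpha(a)$ and $s_\beta(b)$. Indeed, the relation you extract, with $w=a\Star\lambda,\alpha,\beta;b$ and your $m_0$, is verbatim the equation the paper itself displays in the converse proposition characterizing when $(A\oplus M,(\cdot^\psi_{\lambda;\,\alpha,\,\beta}),(P^\chi_\omega))$ is a Rota--Baxter family $\Omega$-associative conformal algebra, so your route through the transported model is consistent with the paper's intended argument.
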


 \begin{proof}
 The proof is by direct computations, so it is left to the reader.
 \end{proof}	
 	
 The choice of the section $s_\omega$ in fact determines a splitting
 $$\xymatrix{0\ar[r]&  M\ar@<1ex>[r]^{i_\omega} &\hat{A}\ar@<1ex>[r]^{p_\omega} \ar@<1ex>[l]^{t_\omega}& A \ar@<1ex>[l]^{s_\omega} \ar[r] & 0}$$
 subject to $t_\omega\circ i_\omega=\Id_{M,\,\omega},\, t_\omega\circ s_\omega=0$ and $ i_\omega t_\omega+s_\omega p_\omega=\Id_{\hat{A},\,\omega}$.
 Then there is an induced isomorphism of vector spaces
 $$\left(\begin{array}{cc} p_\omega& t_\omega\end{array}\right): \hat{A}\cong   A\oplus M: \left(\begin{array}{c} s_\omega\\ i_\omega\end{array}\right).$$
We can  transfer the Rota-Baxter family $\Omega$-associative conformal algebra structure on $\hat{A}$ to $A\oplus M$ via this isomorphism.
  It is direct to verify that this  endows $A\oplus M$ with a multiplication $(\cdot^\psi_{\lambda;\,\alpha,\,\beta})_{\alpha,\,\beta\in\Omega}$ and a Rota-Baxter family operator $(P^\chi_\omega)_{\omega\in\Omega}$. For $a,b\in A,\,m,n\in M$ and $\alpha,\beta\in\Omega$,  define
 \begin{align}
 \label{eq:mul}(a,m)\cdot^\psi_{\lambda;\,\alpha,\,\beta}(b,n)&:=(a\Cdot\lambda,\alpha,\beta;b,
 a\Cdot\lambda,\alpha,\beta;n+m\Cdot\lambda,\alpha,\beta;b+\psi_{\lambda;\,\alpha,\,\beta} (a,b)), \\
 \label{eq:dif}P^\chi_\omega(a,m)&:=(P_\omega(a),\chi_\omega(a)+P_{M,\,\omega}(m)).
 \end{align}
 Moreover, we get an abelian extension
\[0\to (M, (P_{M,\,\omega})_{\omega\in\Omega})\xrightarrow{\left(\begin{array}{cc} s_\omega& i_\omega\end{array}\right) } (A\oplus M, P^\chi_\omega)\xrightarrow{\left(\begin{array}{c} p_\omega\\ t_\omega\end{array}\right)} (A, (P_\omega)_{\omega\in\Omega})\to 0\]
 which is easily seen to be  isomorphic to the original one \eqref{Eq: abelian extension}.


 Now we investigate the influence of different choices of   sections.

 \begin{prop}\label{prop: different sections give}
 Let $\Omega$ be a semigroup.
 \begin{itemize}
 \item[(i)] Different choices of the section $(s_\omega)_{\omega\in\Omega}$ give the same  Rota-Baxter family $\Omega$-associative conformal bimodule structures on $(M, (P_{M,\,\omega})_{\omega\in\Omega})$.

 \item[(ii)] The cohomological class of $((\psi_{\lambda;\,\alpha,\,\beta})_{\alpha,\,\beta\in\Omega},(\chi_\omega)_{\omega\in\Omega})$ does not depend on the choice of sections.
 \end{itemize}

 \end{prop}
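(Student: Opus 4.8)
The plan is to exploit two structural facts: $M$ embeds in $\hat{A}$ as an ideal on which the multiplication vanishes, and any two sections differ by a family of maps landing in $M$. For part (i), let $(s_\omega)_{\omega\in\Omega}$ and $(s'_\omega)_{\omega\in\Omega}$ be two (say $\mathbb{F}[\partial]$-linear) sections and set $\varphi_\omega:=s_\omega-s'_\omega$. Since $p_\omega\circ\varphi_\omega=0$, each $\varphi_\omega$ takes values in $\ker p_\omega=M$. For the induced left action I would compare $s_\alpha(a)\Cdot\lambda,\alpha,\beta;m$ with $s'_\alpha(a)\Cdot\lambda,\alpha,\beta;m$; their difference is $\varphi_\alpha(a)\Cdot\lambda,\alpha,\beta;m$, a product of two elements of $M$, which vanishes because $u\Cdot\lambda,\alpha,\beta;v=0$ on the abelian kernel. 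The right action is handled identically, and $P_{M,\,\omega}$ is intrinsic to $M$; hence the bimodule structure of Proposition~\ref{Prop: new RB bimodules from abelian extensions} is independent of the chosen section.

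For part (ii) I keep $\varphi_\omega=s_\omega-s'_\omega:A\to M$ and compute the differences of the two cocycles directly. Substituting $s=s'+\varphi$ into $s_\alpha(a)\Cdot\lambda,\alpha,\beta;s_\beta(b)$, the purely quadratic term $\varphi_\alpha(a)\Cdot\lambda,\alpha,\beta;\varphi_\beta(b)$ drops out by triviality of the multiplication on $M$, while the two mixed terms are exactly the induced module actions $a\Cdot\lambda,\alpha,\beta;\varphi_\beta(b)$ and $\varphi_\alpha(a)\Cdot\lambda,\alpha,\beta;b$ (well-defined by part (i)). Collecting terms yields
\[\psi_{\lambda;\,\alpha,\,\beta}(a,b)-\psi'_{\lambda;\,\alpha,\,\beta}(a,b)=a\Cdot\lambda,\alpha,\beta;\varphi_\beta(b)-\varphi_{\alpha\beta}(a\Cdot\lambda,\alpha,\beta;b)+\varphi_\alpha(a)\Cdot\lambda,\alpha,\beta;b,\]
which is precisely $(\delta^1\varphi)_{\lambda;\,\alpha,\,\beta}(a,b)$ by Eq.~(\ref{1-cocycle}).

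For the Rota-Baxter component, I use the commutativity $\hat{P}_\omega\circ i_\omega=i_\omega\circ P_{M,\,\omega}$, so that $\hat{P}_\omega$ restricts to $P_{M,\,\omega}$ on $M$. Then
\[\chi_\omega(a)-\chi'_\omega(a)=\hat{P}_\omega(\varphi_\omega(a))-\varphi_\omega(P_\omega(a))=P_{M,\,\omega}(\varphi_\omega(a))-\varphi_\omega(P_\omega(a))=-\Phi^1(\varphi)_\omega(a),\]
the last equality being the definition of $\Phi^1$. Comparing with the differential of Eq.~(\ref{eq:diff}), which gives $d^1(\varphi)=(\delta^1\varphi,\,-\Phi^1\varphi)$, I conclude $(\psi-\psi',\,\chi-\chi')=d^1(\varphi)$, so the two cocycles differ by a coboundary and define the same class in ${\rmH}_{\RBA}^2(A,M)$.

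The main obstacle is purely one of bookkeeping: one must consistently distinguish products computed in $\hat{A}$ from the bimodule actions they induce on $M$, and verify that every term genuinely quadratic in $M$ vanishes. Once part (i) is invoked to identify the surviving mixed terms with the module actions, the two halves of the difference fall out exactly as $\delta^1\varphi$ and $-\Phi^1\varphi$, so no further estimate or structural input is needed.
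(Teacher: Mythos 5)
Your proposal is correct and follows essentially the same route as the paper: write the difference of sections as a family of maps $\gamma_\omega=s^1_\omega-s^2_\omega$ landing in $M$, kill all terms quadratic in $M$ using the trivial multiplication on the abelian kernel to get part (i), and then identify $\psi^1-\psi^2=\delta^1(\gamma)$ and $\chi^1-\chi^2=-\Phi^1(\gamma)$ so that the difference of cocycles is $d^1(\gamma)$. The only cosmetic difference is that you make explicit the use of $\hat{P}_\omega\circ i_\omega=i_\omega\circ P_{M,\omega}$ to replace $\hat{P}_\omega$ by $P_{M,\omega}$ on $M$, which the paper does silently.
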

 \begin{proof}Let $s_\omega^1$ and $s_\omega^2$ be two distinct sections of $(p_\omega)_{\omega\in\Omega}$.
  We define $\gamma_\omega:A\rar M$ by $\gamma_\omega(a)=s_\omega^1(a)-s_\omega^2(a)$.
Since the Rota-Baxter family $\Omega$-associative conformal algebra $(M, (P_{M,\,\omega})_{\omega\in\Omega})$	satisfies  $u\Cdot\lambda,\alpha,\beta;v=0$ for all $u,v\in M$ and $\alpha,\,\beta\in\Omega,$ we have
 \[s_\omega^1(a)\Cdot\lambda,\alpha,\beta;m= s_\omega^2(a)\Cdot\lambda,\alpha,\beta;m+\gamma_\omega(a)\Cdot\lambda,\alpha,\beta;m
 =s_\omega^2(a)\Cdot\lambda,\alpha,\beta;m.\]
So different choices of the section $(s_\omega)_{\omega\in\Omega}$ give the same  Rota-Baxter family $\Omega$-associative conformal bimodule structures on $(M, (P_{M,\,\omega})_{\omega\in\Omega})$.
We show that the cohomological class of $((\psi_{\lambda;\,\alpha,\,\beta})_{\alpha,\,\beta\in\Omega},$ $(\chi_\omega)_{\omega\in\Omega})$ does not depend on the choice of sections.   Then
 	\begin{align*}
 		&\psi_{\lambda;\,\alpha,\,\beta}^1(a,b)\\
 &=s_\alpha^1(a)\Cdot\lambda,\alpha,\beta;s_\beta^1(b)
 -s_{\alpha\beta}^1(a\Cdot\lambda,\alpha,\beta;b)\\
 		&=(s_\alpha^2(a)+\gamma_\alpha(a))\cdot_{\lambda;\,\alpha,\,\beta}(s_\beta^2(b)+\gamma_\beta(b))
 -(s_{\alpha\beta}^2(a\Cdot\lambda,\alpha,\beta;b)
 +\gamma_{\alpha\beta}(a\Cdot\lambda,\alpha,\beta;b))\\
 		&=(s_\alpha^2(a)\Cdot\lambda,\alpha,\beta;s_\beta^2(b)-s_{\alpha\beta}^2(a\Cdot\lambda,\alpha,\beta;b))
 +s_\alpha^2(a)\Cdot\lambda,\alpha,\beta;\gamma_\beta(b)+\gamma_\alpha(a)\Cdot\lambda,\alpha,\beta;s_\beta^2(b)
 -\gamma_{\alpha\beta}(a\Cdot\lambda,\alpha,\beta;b)\\
 		&=(s_\alpha^2(a)\Cdot\lambda,\alpha,\beta;s_\beta^2(b)-s_{\alpha\beta}^2(a\Cdot\lambda,\alpha,\beta;b))
 +a\Cdot\lambda,\alpha,\beta;\gamma_\beta(b)+\gamma_\alpha(a)\Cdot\lambda,\alpha,\beta;b
 -\gamma_{\alpha\beta}(a\Cdot\lambda,\alpha,\beta;b)\\
 		&=\psi^2_{\lambda;\,\alpha,\,\beta}(a,b)+\delta^1(\gamma)_{\lambda;\,\alpha,\,\beta}(a,b),
 	\end{align*}
 	and
 	\begin{align*}
 		\chi^1_\omega(a)&={\hat{P}_\omega}(s_\omega^1(a))-s_\omega^1(P_\omega(a))\\
 		&={\hat{P}_\omega}(s_\omega^2(a)+\gamma_\omega(a))-(s_\omega^2(P_\omega(a))
 +\gamma_\omega(P_\omega(a)))\\
 		&=({\hat{P}_\omega}(s_\omega^2(a))-s_\omega^2(P_\omega(a)))+{\hat{P}_\omega}
 (\gamma_\omega(a))-\gamma_\omega(P_\omega(a))\\
 		&=\chi^2_\omega(a)+P_{M,\omega}(\gamma_\omega(a))-\gamma_\omega(P_{A,\omega}(a))\\
 		&=\chi^2_\omega(a)-\Phi^1(\gamma)_\omega(a).
 	\end{align*}
 So by Eq.~(\ref{eq:diff}), we have
 \[(\psi^1_{\lambda;\,\alpha,\,\beta},\chi^1_\omega)-(\psi^2_{\lambda;\,\alpha,\,\beta},\chi^2_\omega)
 =(\delta^1(\gamma)_{\lambda;\,\alpha,\,\beta},-\Phi^1(\gamma)_\omega)
 =d^1(\gamma)_{\lambda;\,\alpha,\,\beta,\,\omega}.\]
Hence $((\psi^1_{\lambda;\,\alpha,\,\beta})_{\alpha,\,\beta\in\Omega}, (\chi^1_\omega)_{\omega\in\Omega})$ and $((\psi^2_{\lambda;\,\alpha,\,\beta})_{\alpha,\,\beta\in\Omega}, (\chi^2_\omega)_{\omega\in\Omega})$ form the same cohomological class  {in $\rmH_{\RBA}^2(A,M)$}.
 \end{proof}

 We show now the isomorphic abelian extensions give rise to the same cohomology classes.
 \begin{prop}
 Let $(A,(P_\omega)_{\omega\in\Omega})$ be a  Rota-Baxter family $\Omega$-associative conformal algebra.
 Two isomorphic abelian extensions of Rota-Baxter family $\Omega$-associative conformal algebra $(A, (P_\omega)_{\omega\in\Omega})$ by  $(M, (P_{M,\,\omega})_{\omega\in\Omega})$  give rise to the same cohomology class  in $\rmH_{\RBA}^2(A,M)$.
 \end{prop}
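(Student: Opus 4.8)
The plan is to reduce the statement to the section--independence already established in Proposition~\ref{prop: different sections give}(ii). Recall that to each abelian extension one attaches, after choosing a section, a $2$-cocycle $((\psi_{\lambda;\,\alpha,\,\beta})_{\alpha,\,\beta\in\Omega},(\chi_\omega)_{\omega\in\Omega})$ whose class in $\rmH^2_{\RBA}(A,M)$ does not depend on the chosen section. Hence, given an isomorphism $(\zeta_\omega)_{\omega\in\Omega}$ of the two extensions as in the diagram~\eqref{Eq: isom of abelian extension}, it suffices to exhibit one section of each extension for which the two resulting cocycles coincide exactly; the section--independence then upgrades this to equality of classes.

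First I would fix an arbitrary section $(s^1_\omega)_{\omega\in\Omega}$ of $(p^1_\omega)_{\omega\in\Omega}$ and set $s^2_\omega:=\zeta_\omega\circ s^1_\omega$. Commutativity of the right--hand square of~\eqref{Eq: isom of abelian extension}, namely $p^2_\omega\circ\zeta_\omega=p^1_\omega$, gives $p^2_\omega\circ s^2_\omega=p^1_\omega\circ s^1_\omega=\Id_A$, so $(s^2_\omega)_{\omega\in\Omega}$ is indeed a section of $(p^2_\omega)_{\omega\in\Omega}$. The crucial structural input is the left--hand square $\zeta_\omega\circ i_\omega=i_\omega$, which forces $\zeta_\omega$ to restrict to the identity on the image of $M$ inside $\hat{A}_1$; this is precisely what will collapse all correction terms. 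By Proposition~\ref{prop: different sections give}(i) together with this same identity $\zeta|_M=\Id_M$, the two extensions moreover induce the \emph{same} Rota-Baxter family conformal bimodule structure on $M$ (cf. Proposition~\ref{Prop: new RB bimodules from abelian extensions}), so the two cocycles genuinely live in one and the same complex.

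Next I would compute the two cocycles. Since $(\zeta_\omega)_{\omega\in\Omega}$ is a morphism of Rota-Baxter family $\Omega$-associative conformal algebras, it respects both the $\lambda$-products and the Rota-Baxter operators, i.e. $\zeta_{\alpha\beta}\big(x\cdot^{\hat{A}_1}_{\lambda;\,\alpha,\,\beta}y\big)=\zeta_\alpha(x)\cdot^{\hat{A}_2}_{\lambda;\,\alpha,\,\beta}\zeta_\beta(y)$ and $\zeta_\omega\circ\hat{P}^1_\omega=\hat{P}^2_\omega\circ\zeta_\omega$. Substituting $s^2_\omega=\zeta_\omega\circ s^1_\omega$ into the defining formulas for $\psi^2$ and $\chi^2$ and pulling $\zeta$ outside yields
\[
\psi^2_{\lambda;\,\alpha,\,\beta}(a,b)=\zeta_{\alpha\beta}\big(\psi^1_{\lambda;\,\alpha,\,\beta}(a,b)\big),\qquad
\chi^2_\omega(a)=\zeta_\omega\big(\chi^1_\omega(a)\big).
\]
Because the values $\psi^1_{\lambda;\,\alpha,\,\beta}(a,b)$ and $\chi^1_\omega(a)$ lie in $M$, and $\zeta$ is the identity there, the right--hand sides reduce to $\psi^1$ and $\chi^1$ respectively. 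Thus the cocycles attached to the two extensions via the compatible sections $s^1$ and $s^2$ are literally equal, and Proposition~\ref{prop: different sections give}(ii) then shows the original classes agree in $\rmH^2_{\RBA}(A,M)$.

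I expect the only real subtlety, rather than a genuine obstacle, to be the careful bookkeeping that $\zeta_\omega$ acts as the identity on $M$ and that every cocycle value indeed lands in $M$; once these two facts are in place the morphism identities for $\zeta$ do all the work and the remaining computation is routine.
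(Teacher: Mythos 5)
Your proposal is correct and follows essentially the same route as the paper: transport a section $s^1$ of the first extension to $s^2=\zeta\circ s^1$ of the second, use $\zeta|_M=\Id_M$ and the morphism identities to show $\psi^2=\zeta(\psi^1)=\psi^1$ and $\chi^2=\zeta(\chi^1)=\chi^1$, and invoke section-independence to conclude. The only difference is that you make the appeal to Proposition~\ref{prop: different sections give}(ii) explicit, which the paper leaves implicit.
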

 \begin{proof}
  Assume that $(\hat{A}_1,(\hat{P}^1_\omega)_{\omega\in\Omega})$ and $(\hat{A}_2,(\hat{P}^2_\omega)_{\omega\in\Omega})$ are two isomorphic abelian extensions of $(A,(P_\omega)_{\omega\in\Omega})$ by $(M,(P_{M,\,\omega})_{\omega\in\Omega})$ as is given in \eqref{Eq: isom of abelian extension}. Let $s_\omega^1$ be a section of $(\hat{A}_1,(\hat{P}^1_\omega)_{\omega\in\Omega})$. As $p^2_\omega\circ\zeta_\omega=p^1_\omega$, we have
 	\[p^2_\omega\circ(\zeta_\omega\circ s_\omega^1)=p^1_\omega\circ s_\omega^1=\Id_{A,\,\omega}.\]
 	Therefore, $\zeta_\omega\circ s_\omega^1$ is a section of $(\hat{A}_2,(\hat{P}^1_\omega)_{\omega\in\Omega})$. Denote $s_\omega^2:=\zeta_\omega\circ s_\omega^1$. Since $\zeta_\omega$ is a homomorphism of Rota-Baxter family $\Omega$-associative conformal algebras such that
 \[\zeta_\omega|_M=\Id_{M,\,\omega},\quad \zeta_\omega(a\Cdot\lambda,\alpha,\beta;m)=\zeta_\omega(s_\omega^1(a)\Cdot\lambda,\alpha,\beta;m)
 =s_\omega^2(a)\Cdot\lambda,\alpha,\beta;m=a\Cdot\lambda,\alpha,\beta;m,\]
  so $\zeta_\omega|_M: M\to M$ is compatible with the induced  Rota-Baxter family $\Omega$-associative conformal bimodule structures.
 We have
 	\begin{align*}
 		\psi^2_{\lambda;\,\alpha,\,\beta}(a, b)&=s_\alpha^2(a)\Cdot\lambda,\alpha,\beta;s_\beta^2(b)-s_{\alpha\beta}^2(a\Cdot\lambda,\alpha,\beta;b)
 =\zeta_\alpha(s_\alpha^1(a))\Cdot\lambda,\alpha,\beta;\zeta_\beta(s_\beta^1(b))
 -\zeta_{\alpha\beta}(s_{\alpha\beta}^1(a\Cdot\lambda,\alpha,\beta;b))\\
 		&=\zeta_{\alpha\beta}(s_\alpha^1(a)\Cdot\lambda,\alpha,\beta;s_\beta^1(b)
 -s_{\alpha\beta}^1(a\Cdot\lambda,\alpha,\beta;b))
 =\zeta_{\alpha\beta}(\psi^1_{\lambda;\,\alpha,\,\beta}(a,b))\\
 		&=\psi^1_{\lambda;\,\alpha,\,\beta}(a,b),
 	\end{align*}
 	and
 	\begin{align*}
 		\chi^2_\omega(a)&=\hat{P}^2_\omega(s_\omega^2(a))-s_\omega^2(P_\omega(a))
 =\hat{P}^2_\omega(\zeta_\omega(s_\omega^1(a)))-\zeta_\omega(s_\omega^1(P_\omega(a)))\\
 		&=\zeta_\omega(\hat{P}^1_\omega(s_\omega^1(a))-s_\omega^1(P_\omega(a)))
 =\zeta_\omega(\chi^1_\omega(a))\\
 		&=\chi^1_\omega(a).
 	\end{align*}
 	So two isomorphic abelian extensions give rise to the same element in {$\rmH_{\RBA}^2(A,M)$}.
\end{proof}

 Now let us consider the reverse direction.
Given two families of conformal linear maps  $(\psi_{\lambda;\,\alpha,\,\beta})_{\alpha,\,\beta\in\Omega}:A\ot A\rar M$ and $(\chi_\omega)_{\omega\in\Omega}:A\rar M$, one can define  a family of multiplication operations $(\cdot^\psi_{\lambda;\,\alpha,\,\beta})_{\alpha,\,\beta\in\Omega}$ and a family of $(P^\chi_\omega)_{\omega\in\Omega}$ on $A\oplus M$ by Eqs.~(\ref{eq:mul}-\ref{eq:dif}).
 The following fact is important:
 \begin{prop}
 	The triple $(A\oplus M, (\cdot^\psi_{\lambda;\,\alpha,\,\beta})_{\alpha,\,\beta\in\Omega},(P^\chi_\omega)_{\omega\in\Omega})$ is a Rota-Baxter family $\Omega$-associative conformal algebra   if and only if
 	$((\psi_{\lambda;\,\alpha,\,\beta})_{\alpha,\,\beta\in\Omega},(\chi_{\omega})_{\omega\in\Omega})$ is a 2-cocycle  of the Rota-Baxter family $\Omega$-associative conformal algebra $(A,(P_\omega)_{\omega\in\Omega})$ with  coefficients  in $(M,(P_{M,\omega})_{\omega\in\Omega})$.
 \end{prop}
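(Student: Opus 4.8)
The plan is to verify the two defining axioms of a Rota-Baxter family $\Omega$-associative conformal algebra on $\big(A\oplus M, (\cdot^\psi_{\lambda;\,\alpha,\,\beta})_{\alpha,\,\beta\in\Omega}, (P^\chi_\omega)_{\omega\in\Omega}\big)$ — the $\Omega$-associativity of $(\cdot^\psi_{\lambda;\,\alpha,\,\beta})_{\alpha,\,\beta\in\Omega}$ and the Rota-Baxter family relation for $(P^\chi_\omega)_{\omega\in\Omega}$ — and to show that these are respectively equivalent to the two components of the cocycle condition $d^2(\psi,\chi)=0$. First I would record that conformal sesquilinearity of $\cdot^\psi_{\lambda;\,\alpha,\,\beta}$ and the $\mathbb{F}[\partial]$-linearity of $P^\chi_\omega$ follow at once from the corresponding properties of $\mu_{\lambda;\,\alpha,\,\beta}$, the bimodule actions on $M$, $P_\omega$, $P_{M,\,\omega}$, and the conformal maps $\psi_{\lambda;\,\alpha,\,\beta},\chi_\omega$, and require no cocycle hypothesis. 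Since $A\oplus M$ is a direct sum, the substance is to evaluate each identity on a triple $\big((a,m),(b,n),(c,p)\big)$ and read off its $A$-component and $M$-component separately.

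For associativity, expanding $\big((a,m)\cdot^\psi_{\lambda;\,\alpha,\,\beta}(b,n)\big)\cdot^\psi_{\lambda+\mu;\,\alpha\beta,\,\gamma}(c,p)$ and $(a,m)\cdot^\psi_{\lambda;\,\alpha,\,\beta\gamma}\big((b,n)\cdot^\psi_{\mu;\,\beta,\,\gamma}(c,p)\big)$ through Eq.~(\ref{eq:mul}), the $A$-component reduces to $(a\cdot_{\lambda;\,\alpha,\,\beta}b)\cdot_{\lambda+\mu;\,\alpha\beta,\,\gamma}c = a\cdot_{\lambda;\,\alpha,\,\beta\gamma}(b\cdot_{\mu;\,\beta,\,\gamma}c)$, which holds automatically by associativity in $A$. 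In the $M$-component the purely module-theoretic terms cancel pairwise by the three compatibility identities of Definition~\ref{defn:bimodule}, and the surviving $\psi$-terms are exactly
\[
a\cdot_{\lambda;\,\alpha,\,\beta\gamma}\psi_{\mu;\,\beta,\,\gamma}(b,c) - \psi_{\lambda+\mu;\,\alpha\beta,\,\gamma}(a\cdot_{\lambda;\,\alpha,\,\beta}b,c) + \psi_{\lambda;\,\alpha,\,\beta\gamma}(a,b\cdot_{\mu;\,\beta,\,\gamma}c) - \psi_{\lambda;\,\alpha,\,\beta}(a,b)\cdot_{\lambda+\mu;\,\alpha\beta,\,\gamma}c,
\]
that is, $(\delta^2\psi)_{\lambda,\,\mu;\,\alpha,\,\beta,\,\gamma}(a,b,c)$ as in Eq.~(\ref{2-cocycle}). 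Hence associativity of $\cdot^\psi$ is equivalent to $\delta^2\psi=0$.

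For the Rota-Baxter family relation, I would apply $P^\chi$ via Eq.~(\ref{eq:dif}) and expand both sides of $P^\chi_\alpha(a,m)\cdot^\psi_{\lambda;\,\alpha,\,\beta}P^\chi_\beta(b,n) = P^\chi_{\alpha\beta}\big(P^\chi_\alpha(a,m)\cdot^\psi_{\lambda;\,\alpha,\,\beta}(b,n) + (a,m)\cdot^\psi_{\lambda;\,\alpha,\,\beta}P^\chi_\beta(b,n) + q\,(a,m)\cdot^\psi_{\lambda;\,\alpha,\,\beta}(b,n)\big)$. The $A$-component is precisely the Rota-Baxter family identity for $P_\omega$ on $A$, hence automatic. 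In the $M$-component, the terms in which $P_{M,\,\omega}$ acts on module elements cancel against one another by Eqs.~(\ref{eq:left})–(\ref{eq:right}) (stating that $P_{M,\,\omega}$ is a Rota-Baxter family bimodule operator), and the remaining terms regroup into $\partial^1(\chi)_{\lambda;\,\alpha,\,\beta}$ (the $\chi$-contributions) together with $\Phi^2(\psi)_{\lambda;\,\alpha,\,\beta}$ (the $\psi$-contributions, including the $q$-weighted sum over which arguments receive $P_\alpha,P_\beta$). Matching signs, the $M$-component vanishes iff $\partial^1(\chi)_{\lambda;\,\alpha,\,\beta}+\Phi^2(\psi)_{\lambda;\,\alpha,\,\beta}=0$. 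This is the delicate step and the main obstacle: both $\partial^1(\chi)$ and $\Phi^2(\psi)$ carry several terms, and the expansion of the Rota-Baxter relation produces compensating $P_{M,\,\alpha\beta}$-contractions that must be tracked carefully so that exactly these two cochains survive; the cleanest route is to expand $P^\chi_\alpha(a,m)\cdot^\psi P^\chi_\beta(b,n)$ in full, substitute $\chi_\omega$ and $P_{M,\,\omega}$ everywhere, and use the module Rota-Baxter relations to eliminate the $\psi$-free and $\chi$-free terms before identifying the residue.

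Finally, combining the two equivalences with the differential Eq.~(\ref{eq:diff}), which gives $d^2(\psi,\chi)=\big(\delta^2\psi,\,-\partial^1\chi-\Phi^2\psi\big)$, I conclude that $\big(A\oplus M, (\cdot^\psi_{\lambda;\,\alpha,\,\beta})_{\alpha,\,\beta\in\Omega},(P^\chi_\omega)_{\omega\in\Omega}\big)$ is a Rota-Baxter family $\Omega$-associative conformal algebra precisely when $\delta^2\psi=0$ and $\partial^1\chi+\Phi^2\psi=0$, i.e. when $(\psi,\chi)$ is a $2$-cocycle in $C^\bullet_{\RBA}(A,M)$.
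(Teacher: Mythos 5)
Your proposal is correct and follows essentially the same route as the paper: expand both axioms on $A\oplus M$, observe that the $A$-components hold automatically, cancel the pure module terms in the $M$-components using the bimodule and Rota-Baxter bimodule identities, and identify the residues with $\delta^2\psi$ and $\partial^1\chi+\Phi^2\psi$ respectively. The only organizational difference is that the paper writes out the forward implication and dismisses the converse as "easily checked," whereas your component-by-component equivalences deliver both directions simultaneously, which is marginally cleaner but not a different argument.
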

 \begin{proof}
 	If $(A\oplus M, (\cdot^\psi_{\lambda;\,\alpha,\,\beta})_{\alpha,\,\beta\in\Omega}, (P^\chi_\omega)_{\omega\in\Omega})$ is a Rota-Baxter family $\Omega$-associative conformal algebra, then the associativity of $(\cdot^\psi_{\lambda;\,\alpha,\,\beta})_{\alpha,\,\beta\in\Omega}$ implies
 	\begin{equation*}
 a\cdot^\psi_{\lambda;\,\alpha,\,\beta\gamma}\psi_{\mu;\,\beta,\,\gamma}(b, c)-\psi_{\lambda+\mu;\,\alpha\beta,\,\gamma}
 (a\cdot^\psi_{\lambda;\,\alpha,\,\beta}b, c)+\psi_{\lambda;\,\alpha,\,\beta\gamma}(a, b\cdot^\psi_{\lambda;\,\alpha,\,\beta}c)-\psi_{\lambda;\,\alpha,\,\beta}(a, b)\cdot^\psi_{\lambda+\mu;\,\alpha\beta,\,\gamma}c=0,
 	\end{equation*}
 	which means $\delta^2(\psi)_{\lambda,\,\mu;\,\alpha,\,\beta,\,\gamma}=0$ in $C^\bullet(A,M)$.
 	Since $(P^\chi_\omega)_{\omega\in\Omega}$ is a Rota-Baxter family operator,
 	for any $a,b\in A, m,n\in M$, we have
 \begin{align*}	
 &P^\chi_{\alpha}((a,m))\cdot^\psi_{\lambda;\,\alpha,\,\beta} P^\chi_{\beta}((b,n))\\
 ={}&P^\chi_{\alpha\beta}
 \Big(P^\chi_{\alpha}(a,m)\cdot^\psi_{\lambda;\,\alpha,\,\beta}(b,n)+(a,m)
 \cdot^\psi_{\lambda;\,\alpha,\,\beta} P^\chi_{\beta}(b,n)+q(a,m)\cdot^\psi_{\lambda;\,\alpha,\,\beta}(b,n)\Big).
 \end{align*}
 Then $(\chi_\omega)_{\omega\in\Omega}, (\psi_{\lambda;\,\alpha,\,\beta})_{\alpha,\,\beta\in\Omega}$ satisfy the following equations:
 	\begin{align*}
 		&P_\alpha(a)\cdot^\psi_{\lambda;\,\alpha,\,\beta}\chi_\beta(b)
 +\chi_\alpha(a)\cdot^\psi_{\lambda;\,\alpha,\,\beta}
 P_\beta(b)+\psi_{\lambda;\,\alpha,\,\beta}(P_\alpha(a), P_\beta(b))\\
 		={}&P_{M,\alpha\beta}(\chi_\alpha(a)\cdot^\psi_{\lambda;\,\alpha,\,\beta}b)
 +P_{M,\alpha\beta}(\psi_{\lambda;\,\alpha,\,\beta}(P_\alpha(a), b))+\chi_{\alpha\beta}(P_\alpha(a)\cdot^\psi_{\lambda;\,\alpha,\,\beta}b)\\
 		&+P_{M,\alpha\beta}(a\cdot^\psi_{\lambda;\,\alpha,\,\beta}\chi_\beta(b))
 +P_{M,\alpha\beta}(\psi_{\lambda;\,\alpha,\,\beta}(a, P_\beta(b)))+\chi_{\alpha\beta}(a\cdot^\psi_{\lambda;\,\alpha,\,\beta}P_\beta(b))\\
 		&+q P_{M,\alpha\beta}(\psi_{\lambda;\,\alpha,\,\beta}(a, b))+q \chi_{\alpha\beta}(a\cdot^\psi_{\lambda;\,\alpha,\,\beta}b).
 	\end{align*}
 	
 	That is,
 	\[ \partial^1(\chi)_{\lambda;\,\alpha,\,\beta}+\Phi^2(\psi)_{\lambda;\,\alpha,\,\beta}=0.\]
 	Hence, $((\psi_{\lambda;\,\alpha,\,\beta})_{\alpha,\,\beta\in\Omega},(\chi_{\omega})_{\omega\in\Omega})$ is a  2-cocycle.

 	 Conversely, if $((\psi_{\lambda;\,\alpha,\,\beta})_{\alpha,\,\beta\in\Omega},(\chi_{\omega})_{\omega\in\Omega})$ is a 2-cocycle, one can easily check that $(A\oplus M, (\cdot^\psi_{\lambda;\,\alpha)_{\alpha,\,\beta\in\Omega},\,\beta},$ $(P^\chi_\omega)_{\omega\in\Omega})$ is a Rota-Baxter family $\Omega$-associative conformal algebra.
 \end{proof}

 Finally, we show the following result:
 \begin{prop}
 	Two cohomologous $2$-cocyles give rise to isomorphic abelian extensions.
 \end{prop}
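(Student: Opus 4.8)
The plan is to realize both extensions on the same underlying module $A\oplus M$ and to produce an explicit isomorphism between them out of the $1$-cochain witnessing the cohomology relation. First I would use that $((\psi^1_{\lambda;\,\alpha,\,\beta})_{\alpha,\,\beta\in\Omega},(\chi^1_\omega)_{\omega\in\Omega})$ and $((\psi^2_{\lambda;\,\alpha,\,\beta})_{\alpha,\,\beta\in\Omega},(\chi^2_\omega)_{\omega\in\Omega})$ being cohomologous means their difference is $d^1(\gamma)$ for some family of conformal linear maps $\gamma=(\gamma_\omega)_{\omega\in\Omega}:A\to M$, regarded as a $1$-cochain in $C^1_{\RBA}(A,M)$. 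Unwinding the definition of $d^1$ in Eq.~(\ref{eq:diff}), this amounts to the two identities
\begin{align*}
\psi^1_{\lambda;\,\alpha,\,\beta}(a,b)-\psi^2_{\lambda;\,\alpha,\,\beta}(a,b)&=\delta^1(\gamma)_{\lambda;\,\alpha,\,\beta}(a,b)=a\Cdot\lambda,\alpha,\beta;\gamma_\beta(b)-\gamma_{\alpha\beta}(a\Cdot\lambda,\alpha,\beta;b)+\gamma_\alpha(a)\Cdot\lambda,\alpha,\beta;b,\\
\chi^1_\omega(a)-\chi^2_\omega(a)&=-\Phi^1(\gamma)_\omega(a)=P_{M,\,\omega}(\gamma_\omega(a))-\gamma_\omega(P_\omega(a)),
\end{align*}
which serve as the two computational inputs for the rest of the argument.

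Next I would form the abelian extensions $\hat{A}_i=(A\oplus M,(\cdot^{\psi^i}_{\lambda;\,\alpha,\,\beta})_{\alpha,\,\beta\in\Omega},(P^{\chi^i}_\omega)_{\omega\in\Omega})$, $i=1,2$, attached to the two cocycles by Eqs.~(\ref{eq:mul}--\ref{eq:dif}), and propose the family $\zeta_\omega:A\oplus M\to A\oplus M$ given by $\zeta_\omega(a,m):=(a,m+\gamma_\omega(a))$. Since each $\gamma_\omega$ is conformal linear, $\zeta_\omega$ is an $\mathbb{F}[\partial]$-module isomorphism with inverse $(a,m)\mapsto(a,m-\gamma_\omega(a))$; it restricts to the identity on $M$ and descends to the identity on $A$, so it automatically fills the commutative diagram Eq.~(\ref{Eq: isom of abelian extension}) required of an isomorphism of abelian extensions, and it only remains to check that $(\zeta_\omega)_{\omega\in\Omega}$ respects the two structure maps.

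The heart of the proof is this compatibility check. For the $\lambda$-multiplications I would expand $\zeta_{\alpha\beta}\big((a,m)\cdot^{\psi^1}_{\lambda;\,\alpha,\,\beta}(b,n)\big)$ and $\zeta_\alpha(a,m)\cdot^{\psi^2}_{\lambda;\,\alpha,\,\beta}\zeta_\beta(b,n)$ via Eq.~(\ref{eq:mul}); the $A$-components coincide at once, and the two $M$-components differ by precisely $\psi^1_{\lambda;\,\alpha,\,\beta}(a,b)-\psi^2_{\lambda;\,\alpha,\,\beta}(a,b)-\delta^1(\gamma)_{\lambda;\,\alpha,\,\beta}(a,b)$, which vanishes by the first identity above. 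Likewise, expanding $\zeta_\omega\big(P^{\chi^1}_\omega(a,m)\big)$ and $P^{\chi^2}_\omega\big(\zeta_\omega(a,m)\big)$ via Eq.~(\ref{eq:dif}), the discrepancy in the $M$-component is exactly $\chi^1_\omega(a)-\chi^2_\omega(a)+\Phi^1(\gamma)_\omega(a)$, which vanishes by the second identity. These equalities say $\zeta_{\alpha\beta}\circ\cdot^{\psi^1}_{\lambda;\,\alpha,\,\beta}=\cdot^{\psi^2}_{\lambda;\,\alpha,\,\beta}\circ(\zeta_\alpha\ot\zeta_\beta)$ and $\zeta_\omega\circ P^{\chi^1}_\omega=P^{\chi^2}_\omega\circ\zeta_\omega$, so $\zeta$ is an isomorphism of Rota-Baxter family $\Omega$-associative conformal algebras over the identity, and the two extensions are isomorphic. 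I do not expect a genuine obstacle: this is the standard ``a coboundary is a change of splitting'' computation, and the only point demanding care is the bookkeeping of matching the $\delta^1$-term to the multiplication discrepancy and the $\Phi^1$-term to the Rota-Baxter discrepancy, a pairing forced by the very definition of $d^1$ in Eq.~(\ref{eq:diff}). Combined with the preceding proposition that isomorphic abelian extensions carry the same class, this yields the desired bijection between ${\rmH}_{\RBA}^2(A,M)$ and isomorphism classes of abelian extensions.
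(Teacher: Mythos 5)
Your proof is correct and follows essentially the same route as the paper: both extract a $1$-cochain $\gamma$ from the coboundary relation and check that $\zeta_\omega(a,m)=(a,\,m\pm\gamma_\omega(a))$ intertwines the multiplications and the Rota--Baxter family operators of the two extensions built on $A\oplus M$ (the paper takes the opposite sign, i.e.\ the inverse of your map). The one point you gloss over is that a general $1$-cochain in $C^1_{\RBA}(A,M)$ also carries a component $\gamma^0\in C^0_{\RBO}(A,M)$, so the coboundary is a priori $(\delta^1(\gamma^1),-\Phi^1(\gamma^1)-\partial^0(\gamma^0))$; the paper disposes of this by observing $\partial^0=\Phi^1\circ\delta^0$ and replacing $\gamma^1$ with $\gamma^1+\delta^0(\gamma^0)$, after which your two identities hold verbatim.
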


 \begin{proof}
Given two 2-cocycles $((\psi^1_{\lambda;\,\alpha,\,\beta})_{\alpha,\,\beta\in\Omega}, (\chi^1_\omega)_{\omega\in\Omega})$ and $((\psi^2_{\lambda;\,\alpha,\,\beta})_{\alpha,\,\beta\in\Omega}, (\chi^2_\omega)_{\omega\in\Omega})$, we can construct two abelian extensions $(A\oplus M, (\cdot^{\psi_{1}}_{\lambda;\,\alpha,\,\beta})_{\alpha,\,\beta\in\Omega}, (P^{\chi^1}_{\omega})_{\omega\in\Omega})$ and  $(A\oplus M, (\cdot^{\psi_{2}}_{\lambda;\,\alpha,\,\beta})_{\alpha,\,\beta\in\Omega}, (P^{\chi^2}_{\omega})_{\omega\in\Omega})$ via Eqs.~\eqref{eq:mul} and \eqref{eq:dif}. If they represent the same cohomology  class {in $\rmH_{\RBA}^2(A,M)$}, then there exists two linear maps $\gamma_\omega^0:k\rightarrow M, \gamma_\omega^1:A\to M$ such that $$(\psi^1_{\lambda;\,\alpha,\,\beta},\chi^1_\omega)=(\psi^2_{\lambda;\,\alpha,\,\beta},\chi^2_\omega)
 +(\delta^1(\gamma^1)_{\lambda;\,\alpha,\,\beta},-\Phi^1(\gamma^1)_\omega
 -\partial^0(\gamma^0)_\omega).$$
 	Notice that $\partial^0_\omega=\Phi^1_\omega\circ\delta^0_\omega$. Define $\gamma_\omega: A\rightarrow M$ to be $\gamma^1_\omega+\delta^0(\gamma^0)_\omega$. Then $(\gamma_\omega)_{\omega\in\Omega}$ satisfies
 	\[(\psi^1_{\lambda;\,\alpha,\,\beta},\chi^1_\omega)=(\psi^2_{\lambda;\,\alpha,\,\beta},\chi^2_\omega)
 +\left(\delta^1(\gamma)_{\lambda;\,\alpha,\,\beta},-\Phi^1(\gamma)_\omega\right).\]
 	Define $(\zeta_\omega)_{\omega\in\Omega}:A\oplus M\rar A\oplus M$ by
 	\[\zeta_\omega(a,m):=(a, -\gamma_\omega(a)+m).\]
 	Then $\zeta_\omega$ is an isomorphism of these two abelian extensions $(A\oplus M, (\cdot^{\psi_{1}}_{\lambda;\,\alpha,\,\beta})_{\alpha,\,\beta\in\Omega}, (P^{\chi^1}_{\omega})_{\omega\in\Omega})$ and  $(A\oplus M, (\cdot^{\psi_{2}}_{\lambda;\,\alpha,\,\beta})_{\alpha,\,\beta\in\Omega}, (P^{\chi^2}_{\omega})_{\omega\in\Omega})$.
 \end{proof}

\section{Appendix A: Proof of Proposition~\ref{Prop: Chain map Phi} }
\label{Appendix}

\begin{proof}
	$ (i) $ For $ n=0 $, we need to check the following diagram is commutative.
	\[\xymatrix{
		\C^0_{\Alg}(A,M)\ar[r]^-{\delta^0}\ar[d]^-{\Phi^0}& \C^1_{\Alg}(A,M)\ar[d]^{\Phi^1} \\
		\C^0_{\RBO}(A,M)\ar[r]^-{\partial^0}&\C^1_{\RBO}(A,M).
	}\]
	
	For $m\in \C_{\Alg}^{0} (A,M), a\in A$,
	\begin{eqnarray*}
		\Phi^{1} \Big(\delta^{0}(m)\Big)_\alpha(a)
		&=& \delta^{0}(m)_\alpha(P_\alpha(a))-P_{M,\,\alpha}  (\delta^{0}(m)_\alpha(a) )\\
		&=& P_\alpha(a) \cdot_{0;\,\alpha,\,1} m - m \cdot_{-\partial;\,1,\,\alpha} P_\alpha(a) - P_{M,\,\alpha}( a \cdot_{0;\,\alpha,\,1} m - m \cdot_{-\partial;\,1,\,\alpha} a)\\
		&=& \partial^{0}(m)_\alpha(a) \\
		&=& \partial^{0} \Big(\Phi^{0}(m)\Big)_\alpha(a).
	\end{eqnarray*}
	
	So
	\[ \partial^{0} \circ \Phi^{0} = \Phi^{1} \circ \delta^{0}.\]

	$ (ii) $ For the general case, we need to prove that the square below is commutative, for all $ n \geqslant 1 $.
	\[\xymatrix{
		\C^n_{\Alg}(A,M)\ar[r]^-{\delta^n}\ar[d]^-{\Phi^{n}}& \C^{n+1}_{\Alg}(A,M)\ar[d]^{\Phi^{n+1}} \\
		\C^n_{\RBO}(A,M)\ar[r]^-{\partial^n}&\C^{n+1}_{\RBO}(A,M).
	}\]
	
	For $f \in \C_{\Alg}^{n}(A,M) , a_{1}, \dots , a_{n+1} \in A$, On the one hand,
		\begin{eqnarray*}
			&&\Phi^{n+1} \circ \delta^{n}(f)_{\lambda_1,\,\dots,\,\lambda_n;\,\alpha_1,\dots,\alpha_{n+1}}(a_{1},\dots,a_{n+1})\\
			&=& \delta^{n}(f)_{\lambda_1,\,\dots,\,\lambda_n;\,\alpha_1,\dots,\alpha_{n+1}} \left( P_{\alpha_1}(a_{1}),\dots,P_{\alpha_{n+1}}(a_{n+1})\right)\\
			&& -\sum_{k=0}^{n}  q ^{n-k}
			\sum_{1\leq i_{1}<\dots<i_{k}\leq n+1}
			P_{M,\,\alpha_1\dots\alpha_{n+1}}
			\left(\delta^{n}(f)_{\lambda_1,\,\dots,\,\lambda_n;\,\alpha_1,\dots,\alpha_{n+1}}
			\left(a_{1},\dots,P_{\alpha_{i_1}}(a_{i_{1}}),\dots,
P_{\alpha_{i_k}}(a_{i_{k}}),\dots,a_{n+1}\right)\right)  \\
			&=&  P_{\alpha_1}(a_{1}) \cdot_{\lambda_1;\,\alpha_1,\,\alpha_2\dots\alpha_{n+1}} f_{\lambda_2,\,\dots,\,\lambda_n;\,\alpha_2,\dots,\alpha_{n+1}} \left( P_{\alpha_2}(a_{2}),\dots,P_{\alpha_{n+1}}(a_{n+1})\right)\\
			&& +\sum_{i=1}^{n}(-1)^{i}f_{\lambda_1,\,\dots,\,\lambda_n;\,\alpha_1,\dots,\alpha_{n+1}}
\left(P_{\alpha_1}(a_{1}),\dots,\uwave{P_{\alpha_i}(a_{i})\cdot_{\lambda_i;\,\alpha_i,\alpha_{i+1}}
P_{\alpha_{i+1}}(a_{i+1})},\dots,P_{\alpha_{n+1}}(a_{n+1})\right)\\
			&& + (-1)^{n+1}f_{\lambda_1,\,\dots,\,\lambda_n;\,\alpha_1,\dots,\alpha_{n+1}} \left( P_{\alpha_1}(a_{1}),\dots,P_{\alpha_n}(a_{n})\right)
\cdot_{\lambda_1+\dots+\lambda_n;\,\alpha_1\dots\alpha_n,\,\alpha_{n+1}}
P_{\alpha_{n+1}}(a_{n+1})\\
			&& -\sum_{k=0}^{n}  q ^{n-k}
			\sum_{2\leq i_{1}<\dots<i_{k}\leq n+1}
			P_{M,\,\alpha_1\dots\alpha_{n+1}} \Big(a_{1}
\cdot_{\lambda_1;\,\alpha_1,\,\alpha_2\dots\alpha_{n+1}} f_{\lambda_2,\,\dots,\,\lambda_n;\,\alpha_2,\dots,\alpha_{n+1}}\\
&&\hspace{5cm}\left(a_{2},\dots,
P_{\alpha_{i_1}}(a_{i_{1}}),\dots,P_{\alpha_{i_k}}(a_{i_{k}}),\dots,a_{n+1}
\right)\Big)\\
			&& -\sum_{k=1}^{n}  q ^{n-k}
			\sum_{2\leq i_{1}<\dots<i_{k-1}\leq n+1}
			P_{M,\,\alpha_1\dots\alpha_{n+1}}
 \Big(P_{\alpha_1}(a_{1})
 \cdot_{\lambda_1;\,\alpha_1,\,\alpha_2\dots\alpha_{n+1}}
  f_{\lambda_2,\dots,\lambda_n;\,\alpha_2,\dots,\alpha_{n+1}}\\
&&\hspace{5cm}\left(a_{2},\dots,
P_{\alpha_{i_1}}(a_{i_{1}}),\dots,P_{\alpha_{i_{k-1}}}
(a_{i_{k-1}}),\dots,a_{n+1}\right)\Big)\\
			&& -\sum_{k=0}^{n-1}  q ^{n-k}
			\sum_{1\leq i_{1}<\dots<i_{k}\leq n+1}
			\sum_{i=1}^{n}(-1)^{i}
			P_{M,\alpha_1\dots\alpha_{n+1}}   \Big(f_{\lambda_1,\,\dots,\lambda_n;\,\alpha_1,\dots,\alpha_{n+1}}\\
&&\hspace{5cm}
\left(a_{1},\dots,P_{\alpha_{i_1}}(a_{i_{1}}),\dots,a_{i}\cdot_{\lambda_i;\,\alpha_i,\alpha_{i+1}}
a_{i+1},\dots,P_{\alpha_{i_k}}(a_{i_{k}}),\dots,a_{n+1}\right)\Big)\\
			&& -\sum_{k=1}^{n}  q ^{n-k}
			\sum_{1\leq i_{1}<\dots<i_{k}\leq n+1}
			\sum_{i=1}^{n}(-1)^{i}
			P_{M,\,\alpha_1\dots\alpha_{n+1}}
			\Big(f_{\lambda_1,\,\dots,\lambda_i+\lambda_{i+1},\dots,\lambda_n;\,\alpha_1,\dots,\alpha_i\alpha_{i+1},
\dots,\alpha_{n+1}}\\
&&\hspace{5cm}
\left(a_{1},\dots,P_{\alpha_{i_1}}(a_{i_{1}}),\dots,P_{\alpha_i}(a_{i})
\cdot_{\lambda_i;\,\alpha_i,\alpha_{i+1}}a_{i+1},\dots,P_{\alpha_{i_k}}(a_{i_{k}}),\dots,a_{n+1}\right)\Big)   \\
			&& -\sum_{k=1}^{n}  q ^{n-k}
			\sum_{1\leq i_{1}<\dots<i_{k}\leq n+1}
			\sum_{i=1}^{n}(-1)^{i}
			P_{M,\,\alpha_1\dots\alpha_{n+1}}
			\Big(f_{\lambda_1,\,\dots,\lambda_i+\lambda_{i+1},\dots,\lambda_n;\,\alpha_1,\dots,\alpha_i\alpha_{i+1},
\dots,\alpha_{n+1}}\\
&&\hspace{5cm}
\left(a_{1},\dots,P_{\alpha_{i_1}}(a_{i_{1}}),\dots,
a_{i}\cdot_{\lambda_i;\,\alpha_i,\,\alpha_{i+1}}P_{\alpha_{i+1}}(a_{i+1}),\dots,
P_{\alpha_{i_k}}(a_{i_{k}}),\dots,a_{n+1}\right)\Big)   \\
			&& -\sum_{k=2}^{n}  q ^{n-k}
			\sum_{1\leq i_{1}<\dots<i_{k}\leq n+1}
			\sum_{i=1}^{n}(-1)^{i}
			P_{M,\,\alpha_1\dots\alpha_{n+1}}
			\Bigg(f_{\lambda_1,\,\dots,\lambda_i+\lambda_{i+1},\dots,\lambda_n;\,\alpha_1,\dots,\alpha_i\alpha_{i+1},
\dots,\alpha_{n+1}}\\
&&\hspace{3cm}
\left(a_{1},\dots,P_{\alpha_{i_1}}(a_{i_{1}}),\dots,
\uwave{P_{\alpha_i}(a_{i})\cdot_{\lambda_i;\,\alpha_i,\alpha_{i+1}}
P_{\alpha_{i+1}}(a_{i+1})},\dots,P_{\alpha_{i_k}}(a_{i_{k}}),\dots,a_{n+1}\right)\Bigg)   \\
			&& -(-1)^{n+1}\sum_{k=0}^{n}  q ^{n-k}
			\sum_{1\leq i_{1}<\dots<i_{k}\leq n}
			P_{M,\,\alpha_1\dots\alpha_{n+1}} \Big(f_{\lambda_1,\,\dots,,\dots,\lambda_{n-1};\,\alpha_1,\dots,
\dots,\alpha_{n}}\\
&&\hspace{5cm}
\left(a_{1},\dots,P_{\alpha_{i_1}}(a_{i_{1}}),\dots,P_{\alpha_{i_k}}(a_{i_{k}}),\dots,a_{n} \right) \cdot_{\lambda_1+\dots+\lambda_n;\,\alpha_1\dots\alpha_n,\alpha_{n+1}} a_{n+1}
			\Big)  \\
			&& -(-1)^{n+1}\sum_{k=1}^{n}  q ^{n-k}
			\sum_{1\leq i_{1}<\dots<i_{k-1}\leq n}
			P_{M,\,\alpha_1\dots\alpha_{n+1}}  \Big(f_{\lambda_1,\dots,\lambda_{n-1};\,\alpha_1,\dots,\alpha_n}\\
&&\hspace{3cm}
\left(a_{1},\dots,P_{\alpha_{i_1}}(a_{i_{1}}),\dots,P_{\alpha_{k-1}}(a_{i_{k-1}}),\dots,a_{n} \right) \cdot_{\lambda_1+\dots+\lambda_n;\,\alpha_1\dots\alpha_n,\alpha_{n+1}} P_{\alpha_{n+1}}(a_{n+1})
			\Big)  \\
			&=& \uline{ P_{\alpha_1}(a_{1})
 \cdot_{\lambda_1;\,\alpha_1,\alpha_2\dots\alpha_{n+1}}
  f_{\lambda_2,\dots,\lambda_n;\,\alpha_2,\dots,\alpha_{n+1}}
   \left( P_{\alpha_2}(a_{2}),\dots,P_{\alpha_{n+1}}(a_{n+1})\right) }_{(1)}\\
			&& \uuline{ +\sum_{i=1}^{n}(-1)^{i}
f_{\lambda_1,\dots,\lambda_i+\lambda_{i+1},\dots,\lambda_n;\,\alpha_1,\dots,
\alpha_i\alpha_{i+1},\dots,\alpha_{n+1}}}_{(2)}\\
&&\hspace{3cm}
\uuline{\left( P_{\alpha_1}(a_{1}),\dots,{P_{\alpha_i\alpha_{i+1}}(a_{i}\cdot_{\lambda_i;\,\alpha_i,\alpha_{i+1}}
P_{\alpha_{i+1}}(a_{i+1}))},\dots,P_{\alpha_{n+1}}(a_{n+1})\right)}_{(2)}\\
			&& \uwave{ +\sum_{i=1}^{n}(-1)^{i}
f_{\lambda_1,\dots,\lambda_i+\lambda_{i+1},\dots,\lambda_n;\,\alpha_1,\dots,\alpha_i\alpha_{i+1},
\dots,\alpha_{n+1}}
\left( P_{\alpha_1}(a_{1}),\dots,{P_{\alpha_i\alpha_{i+1}}(P_{\alpha_i}(a_{i})
\cdot_{\lambda_i;\,\alpha_i,\alpha_{i+1}}a_{i+1})},\dots,P_{\alpha_{n+1}}(a_{n+1})\right) }_{(3)}\\
			&& \dashuline{ +\sum_{i=1}^{n}(-1)^{i}  q
f_{\lambda_1,\dots,\lambda_i+\lambda_{i+1},\dots,\lambda_n;\,\alpha_1,\dots,\alpha_i\alpha_{i+1},
\dots,\alpha_{n+1}}
 \left( P_{\alpha_1}(a_{1}),\dots,{P_{\alpha_i\alpha_{i+1}}(a_{i}\cdot_{\lambda_i;\,\alpha_i,\alpha_{i+1}}a_{i+1})},
 \dots,P_{\alpha_{n+1}}(a_{n+1})\right) }_{(4)}\\
			&& \dotuline{ +(-1)^{n+1}
f_{\lambda_1,\dots,\lambda_{n-1};\,\alpha_1,\dots,\alpha_n}
\left( P_{\alpha_1}(a_{1}),\dots,P_{\alpha_n}(a_{n})\right) \cdot_{\lambda_1+\dots+\lambda_n;\,\alpha_1\dots\alpha_n,\alpha_{n+1}} P_{\alpha_{n+1}}(a_{n+1}) }_{(5)}\\
			&& \uline{ -\sum_{k=0}^{n}  q ^{n-k}
				\sum_{2\leq i_{1}<\dots<i_{k}\leq n+1}
				P_{M,\,\alpha_1\dots\alpha_{n+1}}
\Big(a_{1} \cdot_{\lambda_1;\,\alpha_1,\alpha_2\dots\alpha_{n+1}}
f_{\lambda_2,\dots,\lambda_n;\,\alpha_2,\dots,\alpha_{n+1}}}_{(6)}\\
&&\hspace{5cm}
\uline{\left(a_{2},\dots,P_{\alpha_{i_1}}(a_{i_{1}}),\dots,P_{\alpha_{i_k}}(a_{i_{k}}),\dots,a_{n+1}
\right)\Big) }_{(6)}\\
			&& \uuline{ -\sum_{k=1}^{n}  q ^{n-k}
				\sum_{2\leq i_{1}<\dots<i_{k-1}\leq n+1}
				P_{M,\,\alpha_1\dots\alpha_{n+1}}
\Big(P_{\alpha_1}(a_{1})
\cdot_{\lambda_1;\,\alpha_1,\,\alpha_2\dots\alpha_{n+1}}
f_{\lambda_2,\dots,\lambda_n;\,\alpha_2,\dots,\alpha_{n+1}}}_{(7)}\\
&&\hspace{5cm}
\uuline{
\left(a_{2},\dots,P_{\alpha_{i_1}}(a_{i_{1}}),\dots,P_{\alpha_{i_{k-1}}}(a_{i_{k-1}}),
\dots,a_{n+1}\right)\Big) }_{(7)}\\
			&& \uwave{ -\sum_{k=0}^{n-1}  q ^{n-k}
				\sum_{1\leq i_{1}<\dots<i_{k}\leq n+1}
				\sum_{i=1}^{n}(-1)^{i}
				P_{M,\,\alpha_1\dots\alpha_{n+1}}
				\Big(
f_{\lambda_1,\dots,\lambda_i+\lambda_{i+1},\dots,\lambda_n;\,\alpha_1,
\dots,\alpha_i\alpha_{i+1},\dots,\alpha_{n+1}}}_{(8)}\\
&&\hspace{5cm}
\uwave{\left(a_{1},\dots,P_{\alpha_{i_1}}(a_{i_{1}}),\dots,a_{i}\cdot_{\lambda_i;\,\alpha_i,\alpha_{i+1}}
a_{i+1},\dots,P_{\alpha_{i_k}}(a_{i_{k}}),\dots,a_{n+1}\right)\Big)}_{(8)} \\
			&& \dashuline{ -\sum_{k=1}^{n}  q ^{n-k}
				\sum_{1\leq i_{1}<\dots<i_{k}\leq n+1}
				\sum_{i=1}^{n}(-1)^{i}
				P_{M,\,\alpha_1\dots\alpha_{n+1}}
				\Big(
f_{\lambda_1,\dots,\lambda_i+\lambda_{i+1},\dots,\lambda_n;\,\alpha_1,
\dots,\alpha_i\alpha_{i+1},\dots,\alpha_{n+1}}}_{(9)}\\
&&\hspace{3cm}
\uwave{\left(a_{1},\dots,P_{\alpha_{i_1}}(a_{i_{1}}),\dots,
P_{\alpha_i}(a_{i})\cdot_{\lambda_i;\,\alpha_i,\alpha_{i+1}}a_{i+1},
\dots,P_{\alpha_{i_k}}(a_{i_{k}}),\dots,a_{n+1}\right)\Big) }_{(9)}   \\
			&& \dotuline{ -\sum_{k=1}^{n}  q ^{n-k}
				\sum_{1\leq i_{1}<\dots<i_{k}\leq n+1}
				\sum_{i=1}^{n}(-1)^{i}
				P_{M,\,\alpha_1\dots\alpha_{n+1}}
				\Big(
f_{\lambda_1,\dots,\lambda_i+\lambda_{i+1},\dots,\lambda_n;\,\alpha_1,
\dots,\alpha_i\alpha_{i+1},\dots,\alpha_{n+1}}}_{(10)}\\
&&\hspace{3cm}
\uwave{\left(a_{1},\dots,P_{\alpha_{i_1}}(a_{i_{1}}),\dots,
a_{i}\cdot_{\lambda_i;\,\alpha_i,\alpha_{i+1}}P_{\alpha_{i+1}}(a_{i+1}),\dots,
P_{\alpha_{i_k}}(a_{i_{k}}),\dots,a_{n+1}\right)\Big) }_{(10)}
			\\
			&& \uline{ -\sum_{k=2}^{n}  q ^{n-k}
				\sum_{1\leq i_{1}<\dots<i_{k}\leq n+1}
				\sum_{i=1}^{n}(-1)^{i}
				P_{M,\,\alpha_1\dots\alpha_{n+1}}
				\Big(
f_{\lambda_1,\dots,\lambda_i+\lambda_{i+1},\dots,\lambda_n;\,\alpha_1,
\dots,\alpha_i\alpha_{i+1},\dots,\alpha_{n+1}}}_{(11)}\\
&&\hspace{3cm}
\uwave{
\left(a_{1},\dots,P_{\alpha_{i_1}}(a_{i_{1}}),\dots,{P_{\alpha_i\alpha_{i+1}}
(a_{i}\cdot_{\lambda_i;\,\alpha_i,\alpha_{i+1}}P_{\alpha_{i+1}}(a_{i+1}))},\dots,
P_{\alpha_{i_k}}(a_{i_{k}}),\dots,a_{n+1}\right)\Big) }_{(11)}   \\
			&& \uuline{ -\sum_{k=2}^{n}  q ^{n-k}
				\sum_{1\leq i_{1}<\dots<i_{k}\leq n+1}
				\sum_{i=1}^{n}(-1)^{i}
				P_{M,\,\alpha_1\dots\alpha_{n+1}}
				\Big(
f_{\lambda_1,\dots,\lambda_i+\lambda_{i+1},\dots,\lambda_n;\,\alpha_1,
\dots,\alpha_i\alpha_{i+1},\dots,\alpha_{n+1}}}_{(12)}\\
&&\hspace{3cm}
\uwave{
\left(a_{1},\dots,P_{\alpha_{i_1}}(a_{i_{1}}),\dots,{P_{\alpha_i\alpha_{i+1}}}
(P_{\alpha_i}(a_{i})\cdot_{\lambda_i;\,\alpha_i,\alpha_{i+1}}a_{i+1}),\dots,P_{\alpha_{i_k}}
(a_{i_{k}}),\dots,a_{n+1}\right)\Big) }_{(12)}   \\
			&& \uwave{ -\sum_{k=2}^{n}  q ^{n-k+1}
				\sum_{1\leq i_{1}<\dots<i_{k}\leq n+1}
				\sum_{i=1}^{n}(-1)^{i}
				P_{M,\,\alpha_1\dots\alpha_{n+1}}
				\Big(
f_{\lambda_1,\dots,\lambda_i+\lambda_{i+1},\dots,\lambda_n;\,\alpha_1,
\dots,\alpha_i\alpha_{i+1},\dots,\alpha_{n+1}}}_{(13)}\\
&&\hspace{3cm}
\uwave{
\left(a_{1},\dots,P_{\alpha_{i_1}}(a_{i_{1}}),\dots,{P_{\alpha_i\alpha_{i+1}}
(a_{i}\cdot_{\lambda_i;\,\alpha_i,\alpha_{i+1}}a_{i+1})},
\dots,P_{\alpha_{i_k}}(a_{i_{k}}),\dots,a_{n+1}\right)\Big) }_{(13)}   \\
			&& \dashuline{ -(-1)^{n+1}\sum_{k=0}^{n}  q ^{n-k}
				\sum_{1\leq i_{1}<\dots<i_{k}\leq n}
				P_{M,\,\alpha_1\dots\alpha_{n+1}}
\Big(
f_{\lambda_1,\dots,\lambda_{n-1};\,\alpha_1,\dots,\alpha_n}}_{(14)}\\
&&\hspace{3cm}
\uwave{
\left(a_{1},\dots,P_{\alpha_{i_1}}(a_{i_{1}}),\dots,P_{\alpha_{i_k}}(a_{i_{k}}),\dots,a_{n} \right) \cdot_{\lambda_1+\dots+\lambda_n;\,\alpha_1\dots\alpha_n,\,\alpha_{n+1}}
 a_{n+1}
				\Big) }_{(14)}  \\
			&& \dotuline{ -(-1)^{n+1}\sum_{k=1}^{n}  q ^{n-k}
				\sum_{1\leq i_{1}<\dots<i_{k-1}\leq n}
				P_{M,\,\alpha_1\dots\alpha_{n+1}}
\Big(
f_{\lambda_1,\dots,\lambda_{n-1};\,\alpha_1,\dots,\alpha_n}}_{(15)}\\
&&\hspace{3cm}
\uwave{
\left(a_{1},\dots,P_{\alpha_{i_1}}(a_{i_{1}}),\dots,P_{\alpha_{i_{k-1}}}(a_{i_{k-1}}),\dots,a_{n} \right)
\cdot_{\lambda_1+\dots+\lambda_n;\,\alpha_1\dots\alpha_n,\,\alpha_{n+1}}
 P_{\alpha_{n+1}}(a_{n+1})
				\Big) }_{(15)}  .
		\end{eqnarray*}

	On the other hand,
		\begin{eqnarray*}
			&&\partial^{n} \circ \Phi^{n} (f)_{\lambda_1,\dots,\lambda_n;\,\alpha_1,\dots,\alpha_{n+1}}(a_{1},\dots,a_{n+1})\\
			&=&
			P_{\alpha_1}(a_{1}) \cdot_{\lambda_1;\,\alpha_1,\,\alpha_2\dots\alpha_{n+1}}
\Phi^{n}(f)_{\lambda_2,\dots,\lambda_n;\,\alpha_2,\dots,\alpha_{n+1}}
 (a_{2},\dots,a_{n+1})\\
			&&-
			P_{M,\,\alpha_1\dots\alpha_{n+1}}
\left( a_{1} \cdot_{\lambda_1;\,\alpha_1,\alpha_2\dots\alpha_{n+1}}
 \Phi^{n}(f)_{\lambda_2,\dots,\lambda_n;\,\alpha_2,\dots,\alpha_{n+1}} (a_{2},\dots,a_{n+1}) \right)\\
			&& +\sum_{i=1}^{n}(-1)^{i}
			\Phi^{n}(f)_{\lambda_1,\dots,\lambda_i+\lambda_{i+1},\dots,\lambda_n;\,
\alpha_1,\dots,\alpha_i\alpha_{i+1},\dots,\alpha_{n+1}}
\left(a_{1},\dots,a_{i} \cdot_{\lambda_i;\,\alpha_i,\,\alpha_{i+1}} P_{\alpha_{i+1}}(a_{i+1}),\dots,a_{n+1}\right)\\
			&& +\sum_{i=1}^{n}(-1)^{i}
			\Phi^{n}(f)_{\lambda_1,\dots,\lambda_i+\lambda_{i+1},\dots,\lambda_n;\,
\alpha_1,\dots,\alpha_i\alpha_{i+1},\dots,\alpha_{n+1}}
\left(a_{1},\dots,P_{\alpha_i}(a_{i})\cdot_{\lambda_i;\,\alpha_i,\,\alpha_{i+1}}a_{i+1},\dots,a_{n+1}\right)\\
			&& +\sum_{i=1}^{n}(-1)^{i}
			 q  \Phi^{n}(f)_{\lambda_1,\dots,\lambda_i+\lambda_{i+1},\dots,\lambda_n;\,
\alpha_1,\dots,\alpha_i\alpha_{i+1},\dots,\alpha_{n+1}}
\left(a_{1},\dots,a_{i}\cdot_{\lambda_i;\,\alpha_i,\,\alpha_{i+1}} a_{i+1},\dots,a_{n+1}\right)\\
			&& +(-1)^{n+1}
\Phi^{n}(f)_{\lambda_1,\dots,\dots,\lambda_{n-1};\,
\alpha_1,\dots,\alpha_{n}}
(a_{1},\dots,a_{n}) \cdot_{\lambda_1+\dots\lambda_n;\,\alpha_1\dots\alpha_n,\,\alpha_{n+1}} P_{\alpha_{n+1}}(a_{n+1})\\
			&& -(-1)^{n+1}P_{M,\,\alpha_1\dots\alpha_{n+1}}
\left(
\Phi^{n}(f)_{\lambda_1,\dots,\lambda_{n-1};\,
\alpha_1,\dots,\alpha_{n}}
(a_{1},\dots,a_{n}) \cdot_{\lambda_1+\dots\lambda_n;\,\alpha_1\dots\alpha_n,\,\alpha_{n+1}}
 a_{n+1}\right)\\
			&=&  P_{\alpha_1}(a_{1}) \cdot_{\lambda_1;\,\alpha_1,\alpha_2\dots\alpha_{n+1}}
 f_{\lambda_2,\dots,\lambda_n;\,\alpha_2,\dots,\alpha_{n+1}}
  \left(P_{\alpha_2}(a_{2}),\dots,P_{\alpha_{n+1}}(a_{n+1})\right)\\
			&& -
			\sum_{k=0}^{n-1}  q ^{n-k-1}
			\sum_{2\leq i_{1}<\dots<i_{k}\leq n+1}
			\uwave{P_{\alpha_1}(a_{1})
\cdot_{\lambda_1;\,\alpha_1,\,\alpha_2\dots\alpha_{n+1}}
 P_{M,\,\alpha_2\dots\alpha_{n+1}}
 \Big(
  f_{\lambda_2,\dots,\lambda_n;\,\alpha_2,\dots,\alpha_{n+1}}}\\
  &&\hspace{5cm}
  \uwave{\left(a_{2},\dots,P_{\alpha_{i_1}}(a_{i_{1}}),\dots,P_{\alpha_{i_k}}(a_{i_{k}}),\dots,a_{n+1}\right)
  \Big)}\\
			&& -
			P_{M,\,\alpha_1\dots\alpha_{n+1}}
\left(a_{1} \cdot_{\lambda_1;\,\alpha_1,\,\alpha_2\dots\alpha_{n+1}}
 f_{\lambda_2,\dots,\lambda_n;\,\alpha_2,\dots,\alpha_{n+1}}
 \left(P_{\alpha_2}(a_{2}),\dots,P_{\alpha_{n+1}}(a_{n+1})\right)\right)\\
			&& +
			\sum_{k=0}^{n-1}  q ^{n-k-1}
			\sum_{2\leq i_{1}<\dots<i_{k}\leq n+1}
			P_{M,\alpha_1\dots\alpha_n}
			\Big(a_{1} \cdot_{\lambda_1;\,\alpha_1,\,\alpha_2\dots\alpha_{n+1}}
 P_{M,\alpha_2\dots\alpha_{n+1}} \Big(
 f_{\lambda_2,\dots,\lambda_n;\,\alpha_2,\dots,\alpha_{n+1}}\\
 &&\hspace{5cm}
 \left(a_{2},\dots,P_{\alpha_{i_1}}(a_{i_{1}}),\dots,P_{\alpha_{i_k}}(a_{i_{k}}),\dots,a_{n+1}
 \right)\Big)\Big)\\
			&& + \sum_{i=1}^{n}(-1)^{i}
			f_{\lambda_1,\dots,\lambda_i+\lambda_{i+1},\dots,\lambda_n;\,\alpha_1,\dots,
\alpha_i\alpha_{i+1}\dots,\alpha_{n+1}}\\
&&\hspace{5cm}\left(P_{\alpha_1}(a_{1}),\dots,P_{\alpha_i\alpha_{i+1}}(a_{i}\cdot_{\lambda_i;\,\alpha_i,\alpha_{i+1}}
P_{\alpha_{i+1}}(a_{i+1})),\dots,P_{\alpha_{n+1}}(a_{n+1})\right)\\
			&& -\sum_{i=1}^{n}(-1)^{i}
			\sum_{k=0}^{n-1}  q ^{n-k-1}
			\sum_{1\leq i_{1}<\dots<i_{k}\leq n+1}
			P_{M,\,\alpha_1\dots\alpha_{n+1}}
			\Big(
f_{\lambda_1,\dots,\lambda_i+\lambda_{i+1},\dots,\lambda_n;\,\alpha_1,\dots,
\alpha_i\alpha_{i+1}\dots,\alpha_{n+1}}\\
&&\hspace{3cm}
\left(a_{1},\dots,P_{\alpha_{i_1}}(a_{i_{1}}),\dots,a_{i}\cdot_{\lambda_i;\alpha_i,\alpha_{i+1}}
P_{\alpha_{i+1}}(a_{i+1})\dots,P_{\alpha_{i_k}}(a_{i_{k}}),\dots,a_{n+1}\right)\Big)
			\\
			&& -\sum_{i=1}^{n}(-1)^{i}
			\sum_{k=1}^{n-1}  q ^{n-k-1}
			\sum_{1\leq i_{1}<\dots<i_{k}\leq n+1}
			P_{M,\,\alpha_1\dots\alpha_{n+1}}
			\Big(
f_{\lambda_1,\dots,\lambda_i+\lambda_{i+1},\dots,\lambda_n;\,\alpha_1,\dots,
\alpha_i\alpha_{i+1}\dots,\alpha_{n+1}}\\
&&\hspace{3cm}
\left(a_{1},\dots,P_{\alpha_{i_1}}(a_{i_{1}}),\dots,P_{\alpha_i\alpha_{i+1}}
(a_{i}\cdot_{\lambda_i;\alpha_i,\alpha_{i+1}} P_{\alpha_{i+1}}(a_{i+1}))\dots,P_{\alpha_{i_k}}(a_{i_{k}}),\dots,a_{n+1}\right)\Big)
			\\
			&& + \sum_{i=1}^{n}(-1)^{i}
			f_{\lambda_1,\dots,\lambda_n;\alpha_1,\dots,\alpha_{n+1}}
\left(P_{\alpha_1}(a_{1}),\dots,P_{\alpha_i\alpha_{i+1}}(P_{\alpha_i}(a_{i})\cdot_{\lambda_i;\,\alpha_i,\alpha_{i+1}}
a_{i+1}),\dots,P_{\alpha_{n+1}}(a_{n+1})\right)\\
			&& -\sum_{i=1}^{n}(-1)^{i}
			\sum_{k=0}^{n-1}  q ^{n-k-1}
			\sum_{1\leq i_{1}<\dots<i_{k}\leq n+1}
			P_{M,\alpha_1\dots\alpha_{n+1}}
			\Big(
f_{\lambda_1,\dots,\lambda_i+\lambda_{i+1},\dots,\lambda_n;\,\alpha_1,\dots,
\alpha_i\alpha_{i+1},\dots,\alpha_{n+1}}\\
&&\hspace{3cm}
\left(a_{1},\dots,P_{\alpha_{i_1}}(a_{i_{1}}),\dots,P_{\alpha_i}(a_{i})
\cdot_{\lambda_i;\alpha_i,\alpha_{i+1}}
a_{i+1}\dots,P_{\alpha_{i_k}}(a_{i_{k}}),\dots,a_{n+1}\right)\Big)
			\\
			&& -\sum_{i=1}^{n}(-1)^{i}
			\sum_{k=1}^{n-1}  q ^{n-k-1}
			\sum_{1\leq i_{1}<\dots<i_{k}\leq n+1}
			P_{M,\alpha_1\dots\alpha_{n+1}}
			\Big(
f_{\lambda_1,\dots,\lambda_i+\lambda_{i+1},\dots,\lambda_n;\,\alpha_1,\dots,
\alpha_i\alpha_{i+1},\dots,\alpha_{n+1}}\\
&&\hspace{3cm}
\left(a_{1},\dots,P_{\alpha_{i_1}}(a_{i_{1}}),\dots,P_{\alpha_i\alpha_{i+1}}
(P_{\alpha_i}(a_{i})\cdot_{\lambda_i;\,\alpha_i,\alpha_{i+1}}a_{i+1})\dots,P_{
\alpha_{i_k}}(a_{i_{k}}),\dots,a_{n+1}\right)\Big)
			\\
			&& + \sum_{i=1}^{n}(-1)^{i}  q
			f_{\lambda_1,\dots,\lambda_i+\lambda_{i+1},\dots,\lambda_n;\,\alpha_1,\dots,
\alpha_i\alpha_{i+1},\dots,\alpha_{n+1}}
\left(P_{\alpha_1}(a_{1}),\dots,P_{\alpha_i\alpha_{i+1}}(a_{i}\cdot_{\lambda_i;\alpha_i,\alpha_{i+1}}
a_{i+1}),\dots,P_{\alpha_{n+1}}(a_{n+1})\right)\\
			&& -\sum_{i=1}^{n}(-1)^{i}
			\sum_{k=0}^{n-1}  q ^{n-k}
			\sum_{1\leq i_{1}<\dots<i_{k}\leq n+1}
			P_{M,\alpha_1\dots\alpha_{n+1}}
			\Big(
f_{\lambda_1,\dots,\lambda_i+\lambda_{i+1},\dots,\lambda_n;\,\alpha_1,\dots,
\alpha_i\alpha_{i+1},\dots,\alpha_{n+1}}\\
&&\hspace{3cm}
\left(a_{1},\dots,P_{\alpha_{i_1}}(a_{i_{1}}),\dots,
a_{i}\cdot_{\lambda;\,\alpha_i,\alpha_{i+1}}a_{i+1}\dots,P_{\alpha_{i_k}}
(a_{i_{k}}),\dots,a_{n+1}\right)\Big)
			\\
			&& -\sum_{i=1}^{n}(-1)^{i}
			\sum_{k=1}^{n-1}  q ^{n-k}
			\sum_{1\leq i_{1}<\dots<i_{k}\leq n+1}
			P_{M,\alpha_1\dots\alpha_{n+1}}
			\Big(
f_{\lambda_1,\dots,\lambda_i+\lambda_{i+1},\dots,\lambda_n;\,\alpha_1,\dots,
\alpha_i\alpha_{i+1},\dots,\alpha_{n+1}}\\
&&\hspace{3cm}
\left(a_{1},\dots,P_{\alpha_{i_1}}(a_{i_{1}}),\dots,
P_{\alpha_i\alpha_{i+1}}(a_{i}\cdot_{\lambda;\,\alpha_i,\alpha_{i+1}}a_{i+1})
\dots,P_{\alpha_{i_k}}(a_{i_{k}}),\dots,a_{n+1}\right)\Big)
			\\
			&& +(-1)^{n+1}
f_{\lambda_1,\dots,\lambda_{n-1};\,\alpha_1,\dots,\alpha_n}
\left(P_{\alpha_1}(a_{1}),\dots,P_{\alpha_n}(a_{n})\right)
 \cdot_{\lambda_1+\dots+\lambda_n;\,\alpha_1\dots\alpha_n,\,\alpha_{n+1}}
 P_{\alpha_{n+1}}(a_{n+1})\\
			&& -(-1)^{n+1}
			\sum_{k=0}^{n-1}  q ^{n-k-1}
			\sum_{1\leq i_{1}<\dots<i_{k}\leq n}
			\uwave{P_{M,\,\alpha_1\dots\alpha_{n+1}}
\Big(
f_{\lambda_1,\dots,\lambda_{n-1};\,\alpha_1,\dots,\alpha_n}}\\
&&\hspace{3cm}
\uwave{\left(a_{1},\dots,P_{\alpha_{i_1}}(a_{i_{1}}),\dots,P_{\alpha_{i_k}}(a_{i_{k}}),
\dots,a_{n}\right)\Big)
 \cdot_{\lambda_1+\dots+\lambda_n;\,\alpha_1\dots\alpha_n,\,\alpha_{n+1}}
  P_{\alpha_{n+1}}(a_{n+1})}\\
			&& -(-1)^{n+1}
			P_{M,\alpha_1\dots\alpha_n}
\left(
 f_{\lambda_1,\dots,\lambda_{n-1};\,\alpha_1,\dots,\alpha_n}
 \left(P_{\alpha_1}(a_{1}),\dots,P_{\alpha_n}(a_{n})\right)
 \cdot_{\lambda_1+\dots+\lambda_n;\,\alpha_1\dots\alpha_n,\,\alpha_{n+1}}
  a_{n+1}\right)\\
			&& +(-1)^{n+1}
			\sum_{k=0}^{n-1}  q ^{n-k-1}
			\sum_{1\leq i_{1}<\dots<i_{k}\leq n}
			P_{M,\,\alpha_1\dots\alpha_{n+1}}
			\Big(P_{M,\,\alpha_1\dots\alpha_n}
\Big(
f_{\lambda_1,\dots,\lambda_{n-1};\,\alpha_1,\dots,\alpha_n}\\
&&\hspace{3cm}
\left(a_{1},\dots,P_{\alpha_{i_1}}(a_{i_{1}}),\dots,P_{\alpha_{i_k}}(a_{i_{k}}),\dots,a_{n}\right)\Big) \cdot_{\lambda_1+\dots+\lambda_n;\,\alpha_1\dots\alpha_n,\alpha_{n+1}}
 a_{n+1} \Big)\\
			&=& \uline{ P_{\alpha_1}(a_{1})
 \cdot_{\lambda_1;\,\alpha_1,\alpha_2\dots\alpha_{n+1}}
  f_{\lambda_2,\dots,\lambda_n;\alpha_2,\dots,\alpha_{n+1}}
  \left(P_{\alpha_2}(a_{2}),\dots,P_{\alpha_{n+1}}(a_{n+1})\right) }_{(1)}\\
			&& \uline{ -
				\sum_{k=0}^{n-1}  q ^{n-k-1}
				\sum_{2\leq i_{1}<\dots<i_{k}\leq n+1}
				P_{M,\,\alpha_1\dots\alpha_{n+1}}
\Big(a_{1}
\cdot_{\lambda_1;\,\alpha_1,\alpha_2\dots\alpha_{n+1}}
 P_{M,\,\alpha_2\dots\alpha_{n+1}}
  \Big(
  f_{\lambda_2,\dots,\lambda_n;\,\alpha_2,\dots,\alpha_{n+1}}}_{(16)} \\
  &&\hspace{3cm}
  \uline{\left(a_{2},\dots,P_{\alpha_{i_1}}(a_{i_{1}}),\dots,P_{\alpha_{i_k}}(a_{i_{k}}),\dots,a_{n+1}\right)\Big)\Big) }_{(16)} \\
			&& \uuline{ -
				\sum_{k=0}^{n-1}  q ^{n-k-1}
				\sum_{2\leq i_{1}<\dots<i_{k}\leq n+1}
				P_{M,\,\alpha_1\dots\alpha_{n+1}}
\Big(P_{\alpha_1}(a_{1})
 \cdot_{\lambda_1;\,\alpha_1,\alpha_2\dots\alpha_{n+1}}
   f_{\lambda_2,\dots,\lambda_n;\,\alpha_2,\dots,\alpha_{n+1}}}_{(7)}\\
   &&\hspace{3cm}
   \uuline{\left(a_{2},\dots,P_{\alpha_{i_1}}(a_{i_{1}}),\dots,P_{\alpha_{i_k}}
   (a_{i_{k}}),\dots,a_{n+1}\right)\Big) }_{(7)} \\
			&& \uline{ -
				\sum_{k=0}^{n-1}  q ^{n-k}
				\sum_{2\leq i_{1}<\dots<i_{k}\leq n+1}
				P_{M,\,\alpha_1\dots\alpha_{n+1}}
\Big(a_{1}
\cdot_{\lambda_1;\,\alpha_1,\alpha_2\dots\alpha_{n+1}}
 f_{\lambda_2,\dots,\lambda_n;\,\alpha_2,\dots,\alpha_{n+1}}}_{(6)}\\
 &&\hspace{3cm}
 \uline{\left(a_{2},\dots,P_{\alpha_{i_1}}(a_{i_{1}}),\dots,P_{\alpha_{i_k}}(a_{i_{k}}),
 \dots,a_{n+1}\right)\Big) }_{(6)} \\
			&& \uline{ -
				P_{M,\,\alpha_1\dots\alpha_{n+1}}
\left(a_{1} \cdot_{\lambda_1;\,\alpha_1,\alpha_2\dots\alpha_{n+1}}
f_{\lambda_2,\dots,\lambda_n;\,\alpha_2,\dots,\alpha_{n+1}}
\left(P_{\alpha_2}(a_{2}),\dots,P_{\alpha_{n+1}}(a_{n+1})\right)\right) }_{(6)}\\
			&& \uline{ +
				\sum_{k=0}^{n-1}  q ^{n-k-1}
				\sum_{2\leq i_{1}<\dots<i_{k}\leq n+1}
				P_{M,\,\alpha_1\dots\alpha_{n+1}}
				\Big(a_{1}
\cdot_{\lambda_1;\,\alpha_1,\alpha_2\dots\alpha_{n+1}}
 P_{M,\,\alpha_2\dots\alpha_{n+1}}
 \Big(
 f_{\lambda_2,\dots,\lambda_n;\,\alpha_2,\dots,\alpha_{n+1}}}_{(16)}\\
 &&\hspace{3cm}
 \uline{\left(a_{2},\dots,P_{\alpha_{i_1}}(a_{i_{1}}),\dots,P_{\alpha_{i_k}}(a_{i_{k}}),\dots,a_{n+1}\right)\Big)\Big) }_{(16)}\\
			&& \uuline{ + \sum_{i=1}^{n}(-1)^{i}
				f_{\lambda_1,\dots,\lambda_i+\lambda_{i+1},\dots,\lambda_n;\,\alpha_1,
\dots,\alpha_i+\alpha_{i+1},\dots,\alpha_{n+1}}
\left(P_{\alpha_1}(a_{1}),\dots,P_{\alpha_i\alpha_{i+1}}(a_{i}\cdot_{\lambda_i;\,\alpha_i,\alpha_{i+1}}
P_{\alpha_{i+1}}(a_{i+1})),\dots,P_{\alpha_{n+1}}(a_{n+1})\right) }_{(2)}\\
			&& \dotuline{ -\sum_{i=1}^{n}(-1)^{i}
				\sum_{k=0}^{n-1}  q ^{n-k-1}
				\sum_{1\leq i_{1}<\dots<i_{k}\leq n+1}
				P_{M,\,\alpha_1\dots\alpha_{n+1}}
				\Big(
f_{\lambda_1,\dots,\lambda_i+\lambda_{i+1},\dots,\lambda_n;\,\alpha_1,
\dots,\alpha_i\alpha_{i+1},\dots,\alpha_{n+1}}}_{(10)}
		\\
&&\hspace{3cm}
\dotuline{\left(a_{1},\dots,P_{\alpha_{i_1}}(a_{i_{1}}),\dots,a_{i}\cdot_{\lambda_i;\,\alpha_i,\,\alpha_{i+1}}
P_{\alpha_{i+1}}(a_{i+1})\dots,P_{\alpha_{i_k}}(a_{i_{k}}),\dots,a_{n+1}\right)\Big) }_{(10)}
			\\
			&& \uline{ -\sum_{i=1}^{n}(-1)^{i}
				\sum_{k=1}^{n-1}  q ^{n-k-1}
				\sum_{1\leq i_{1}<\dots<i_{k}\leq n+1}
				P_{M,\,\alpha_1\dots\alpha_{n+1}}
				\Big(
f_{\lambda_1,\dots,\lambda_i+\lambda_{i+1},\dots,\lambda_n;\,\alpha_1,
\dots,\alpha_i\alpha_{i+1},\dots,\alpha_{n+1}}}_{(11)}\\
&&\hspace{3cm}
\uline{\left(a_{1},\dots,P_{\alpha_{i_1}}(a_{i_{1}}),\dots,P_{\alpha_i\alpha_{i+1}}
(a_{i}\cdot_{\lambda_i;\,\alpha_i,\alpha_{i+1}}
P_{\alpha_{i+1}}(a_{i+1}))\dots,P_{\alpha_{i_k}}(a_{i_{k}}),\dots,a_{n+1}\right)\Big) }_{(11)}
			\\
			&& \uwave{ + \sum_{i=1}^{n}(-1)^{i}
				f_{\lambda_1,\dots,\lambda_i+\lambda_{i+1},\dots,\lambda_n;\,\alpha_1,
\dots,\alpha_i\alpha_{i+1},\dots,\alpha_{n+1}}
\left(P_{\alpha_1}(a_{1}),\dots,P_{\alpha_i\alpha_{i+1}}(P_{\alpha_i}(a_{i})
\cdot_{\lambda_i;\,\alpha_i,\alpha_{i+1}}a_{i+1}),\dots,P_{\alpha_{n+1}}(a_{n+1})\right) }_{(3)}\\
			&& \dashuline{ -\sum_{i=1}^{n}(-1)^{i}
				\sum_{k=0}^{n-1}  q ^{n-k-1}
				\sum_{1\leq i_{1}<\dots<i_{k}\leq n+1}
				P_{M,\,\alpha_1\dots\alpha_{n+1}}
				\Big(
f_{\lambda_1,\dots,\lambda_i+\lambda_{i+1},\dots,\lambda_n;\,\alpha_1,
\dots,\alpha_i\alpha_{i+1},\dots,\alpha_{n+1}}}_{(9)}\\
&&\hspace{3cm}
\dashuline{\left(a_{1},\dots,P_{\alpha_{i_1}}(a_{i_{1}}),\dots,P_{\alpha_i}(a_{i})
\cdot_{\lambda_i;\,\alpha_i,\alpha_{i+1}}a_{i+1}\dots,P_{\alpha_{i_k}}(a_{i_{k}}),\dots,a_{n+1}\right)\Big) }_{(9)}
			\\
			&& \uuline{ -\sum_{i=1}^{n}(-1)^{i}
				\sum_{k=1}^{n-1}  q ^{n-k-1}
				\sum_{1\leq i_{1}<\dots<i_{k}\leq n+1}
				P_{M,\,\alpha_1\dots\alpha_{n+1}}
				\Big(
f_{\lambda_1,\dots,\lambda_i+\lambda_{i+1},\dots,\lambda_n;\,\alpha_1,
\dots,\alpha_i\alpha_{i+1},\dots,\alpha_{n+1}}}_{(12)}\\
&&\hspace{3cm}
\uuline{\left(a_{1},\dots,P_{\alpha_{i_1}}(a_{i_{1}}),\dots,P_{\alpha_i\alpha_{i+1}}
(P_{\alpha_i}(a_{i})\cdot_{\lambda_i;\,\alpha_i,\alpha_{i+1}}a_{i+1})\dots,P_{\alpha_{i_k}}(a_{i_{k}}),
\dots,a_{n+1}\right)\Big) }_{(12)}
			\\
			&& \dashuline{ + \sum_{i=1}^{n}(-1)^{i}  q
				f_{\lambda_1,\dots,\lambda_i+\lambda_{i+1},\dots,\lambda_n;\,\alpha_1,
\dots,\alpha_i\alpha_{i+1},\dots,\alpha_{n+1}}
\left(P_{\alpha_1}(a_{1}),\dots,P_{\alpha_i\alpha_{i+1}}(a_{i}\cdot_{\lambda_i;\,\alpha_i,\alpha_{i+1}}
a_{i+1}),\dots,P_{\alpha_{n+1}}(a_{n+1})\right) }_{(4)}\\
			&& \uwave{ -\sum_{i=1}^{n}(-1)^{i}
				\sum_{k=0}^{n-1}  q ^{n-k}
				\sum_{1\leq i_{1}<\dots<i_{k}\leq n+1}
				P_{M,\,\alpha_1\dots\alpha_{n+1}}
				\Big(
f_{\lambda_1,\dots,\lambda_i+\lambda_{i+1},\dots,\lambda_n;\,\alpha_1,
\dots,\alpha_i\alpha_{i+1},\dots,\alpha_{n+1}}}_{(8)}\\
&&\hspace{3cm}
\uwave{\left(a_{1},\dots,P_{\alpha_{i_1}}(a_{i_{1}}),\dots,a_{i}\cdot_{\lambda_i;\,\alpha_i,\alpha_{i+1}}
a_{i+1}\dots,P_{\alpha_{i_k}}(a_{i_{k}}),\dots,a_{n+1}\right)\Big) }_{(8)}
			\\
			&& \uwave{ -\sum_{i=1}^{n}(-1)^{i}
				\sum_{k=1}^{n-1}  q ^{n-k}
				\sum_{1\leq i_{1}<\dots<i_{k}\leq n+1}
				P_{M,\,\alpha_1\dots\alpha_{n+1}}
				\Big(
f_{\lambda_1,\dots,\lambda_i+\lambda_{i+1},\dots,\lambda_n;\,\alpha_1,
\dots,\alpha_i\cdot_{\lambda_i;\,\alpha_i,\alpha_{i+1}}\alpha_{i+1},\dots,\alpha_{n+1}}}_{(13)}\\
&&\hspace{3cm}
\uwave{\left(a_{1},\dots,P_{\alpha_{i_1}}(a_{i_{1}}),\dots,P_{\alpha_i\alpha_{i+1}}
(a_{i}\cdot_{\lambda_i;\,\alpha_i,\alpha_{i+1}}a_{i+1})\dots,P_{\alpha_{i_k}}(a_{i_{k}}),\dots,a_{n+1}\right)\Big) }_{(13)}
			\\
			&& \dotuline{ +(-1)^{n+1}
f_{\lambda_1,\dots,\lambda_{n-1};\,\alpha_1,\dots,\alpha_n}
 \left(P_{\alpha_1}(a_{1}),\dots,P_{\alpha_n}(a_{n})\right)
 \cdot_{\lambda_1+\dots+\lambda_n;\alpha_1\dots\alpha_n,\alpha_{n+1}}
  P_{\alpha_{n+1}}(a_{n+1}) }_{(5)}\\
			&& \dotuline{ -(-1)^{n+1}
				\sum_{k=0}^{n-1}  q ^{n-k-1}
				\sum_{1\leq i_{1}<\dots<i_{k}\leq n}
				P_{M,\,\alpha_1\dots\alpha_{n+1}}
\Big(
 f_{\lambda_1,\dots,\lambda_{n-1};\,\alpha_1,\dots,\alpha_n}}_{(15)}\\
 &&\hspace{3cm}
\dotuline{ \left(a_{1},\dots,P_{\alpha_{i_1}}(a_{i_{1}}),\dots,P_{\alpha_{i_k}}(a_{i_{k}}),\dots,a_{n}\right)
 \cdot_{\lambda_1+\dots+\lambda_n;\alpha_1\dots\alpha_n,\alpha_{n+1}}
  P_{\alpha_{n+1}}(a_{n+1}) \Big) }_{(15)} \\
			&& \uuline{ -(-1)^{n+1}
				\sum_{k=0}^{n-1}  q ^{n-k-1}
				\sum_{1\leq i_{1}<\dots<i_{k}\leq n}
				P_{M,\,\alpha_1\dots\alpha_{n+1}}
\Big(
				P_{M,\,\alpha_1\dots\alpha_{n}}
\Big(
f_{\lambda_1,\dots,\lambda_{n-1};\,\alpha_1,\dots,\alpha_n}}_{(17)}\\
&&\hspace{3cm}
\uuline{\left(a_{1},\dots,P_{\alpha_{i_1}}(a_{i_{1}}),\dots,P_{\alpha_{i_k}}(a_{i_{k}}),
\dots,a_{n}\right)\Big)
\cdot_{\lambda_1+\dots+\lambda_n;\,\alpha_1\dots\alpha_n,\alpha_{n+1}}
 a_{n+1} \Big) }_{(17)} \\
			&& \dashuline{ -(-1)^{n+1}
				\sum_{k=0}^{n-1}  q ^{n-k}
				\sum_{1\leq i_{1}<\dots<i_{k}\leq n}
				P_{M,\,\alpha_1\dots\alpha_{n+1}}
\Big(
				f_{\lambda_1,\dots,\lambda_{n-1};\,\alpha_1,\dots,\alpha_n}}_{(14)}\\
&&\hspace{3cm}
\dashuline{\left(a_{1},\dots,P_{\alpha_{i_1}}(a_{i_{1}}),\dots,P_{\alpha_{i_k}}(a_{i_{k}}),
\dots,a_{n}\right)
\cdot_{\lambda_1+\dots+\lambda_n;\,\alpha_1\dots\alpha_n,\alpha_{n+1}}
 a_{n+1} \Big) }_{(14)} \\
			&& \dashuline{ -(-1)^{n+1}
				P_{M,\,\alpha_1\dots\alpha_{n+1}}
\Big(
 f_{\lambda_1,\dots,\lambda_{n-1};\,\alpha_1,\dots,\alpha_n}
\left(P_{\alpha_1}(a_{1}),\dots,P_{\alpha_n}(a_{n})\right)
 \cdot_{\lambda_1+\dots+\lambda_n;\,\alpha_1\dots\alpha_n,\alpha_{n+1}}
  a_{n+1}\Big) }_{(14)}\\
			&& \uuline{ +(-1)^{n+1}
				\sum_{k=0}^{n-1}  q ^{n-k-1}
				\sum_{1\leq i_{1}<\dots<i_{k}\leq n}
				P_{M,\,\alpha_1\dots\alpha_{n+1}}
				\Big(
P_{M,\,\alpha_1\dots\alpha_{n}}
\Big(
f_{\lambda_1,\dots,\lambda_{n-1};\,\alpha_1,\dots,\alpha_n}}_{(17)}\\
&&\hspace{3cm}
\uuline{\left(a_{2},\dots,P_{\alpha_{i_1}}(a_{i_{1}}),\dots,P_{\alpha_{i_k}}
(a_{i_{k}}),\dots,a_{n}\right)\Big)
\cdot_{\lambda_1+\dots+\lambda_n;\,\alpha_1\dots\alpha_n,\alpha_{n+1}}
 a_{n+1} \Big). }_{(17)} \\
		\end{eqnarray*}
	Compare the above two equations and it is easy to see that they are equal, i.e.,
	\[ \Phi^{n+1} \circ \delta^{n} = \partial^{n} \circ \Phi^{n} .\]
	
	This completes the proof.
\end{proof}

\smallskip
\noindent
{{\bf Acknowledgments.} This work is supported in part by Natural Science Foundation of China (Grant No. 12101183 and 12201182) and project funded by China
Postdoctoral Science Foundation (Grant No. 2021M690049).

\end{document}